\newtheorem{theorem}{Theorem}[section]
\newtheorem{proposition}[theorem]{Proposition}
\newtheorem{lemma}[theorem]{Lemma}
\theoremstyle{definition}
\newtheorem{definition}[theorem]{Definition}
\newtheorem{example}[theorem]{Example}
\newtheorem*{acknowledgements}{Acknowledgements}
\theoremstyle{remark}
\newtheorem{remark}[theorem]{Remark}
\newcommand{\quadmatrix}[4]{\left(\begin{array}
{cc}#1&#2\\#3&#4\end{array}\right)}
\numberwithin{equation}{section}
\begin{document}

\title[Discrete flat and linear Weingarten 
surfaces in $\mathbb{H}^3$]{Discrete flat surfaces and linear Weingarten 
surfaces in hyperbolic 3-space}

\author{Tim Hoffmann}
\address{Department of Mathematics, Munich Technical University, 
85748 Garching, Germany}
\email{tim.hoffmann@ma.tum.de}

\author{Wayne Rossman}
\address{Department of Mathematics, Kobe University,
Kobe 657-8501, Japan}
\email{wayne@math.kobe-u.ac.jp}

\author{Takeshi Sasaki}
\address{Department of Mathematics, Kobe University,
Kobe 657-8501, Japan}
\email{sasaki@math.kobe-u.ac.jp}

\author{Masaaki Yoshida}
\address{Department of Mathematics, Kyushu University,
Fukuoka 819-0395, Japan}
\email{myoshida@math.kyushu-u.ac.jp}


\date{\today}


\begin{abstract}
We define discrete flat surfaces in hyperbolic $3$-space 
$\mathbb{H}^3$ from the perspective of 
discrete integrable systems and prove properties that justify 
the definition.  We show how these surfaces correspond 
to previously defined discrete constant mean curvature $1$ 
surfaces in $\mathbb{H}^3$, and we also describe 
discrete focal surfaces (discrete caustics) that can be used to 
define singularities on discrete flat surfaces.  Along the way, 
we also examine discrete linear Weingarten surfaces of Bryant type 
in $\mathbb{H}^3$, and consider an example of a discrete flat surface related 
to the Airy equation that exhibits swallowtail singularities and 
a Stokes phenomenon.  
\end{abstract}

\maketitle


\section{Introduction}
\label{intro}

The classical Weierstrass representation for minimal surfaces in 
Euclidean $3$-space $\mathbb{R}^3$ gives a local conformal 
parametrization for any minimal surface.  It involves choosing 
two holomorphic functions (or perhaps meromorphic functions when 
considering the surfaces more globally) on a Riemann surface.  
If one restricts to isothermic parametrizations, that is, conformal 
parametrizations that are also curvature line coordinate 
systems, then 
the representation reduces to the choice of just one holomorphic 
function.  Since every minimal surface has local isothermic coordinates 
(away from umbilics), this reduction does not involve any loss of 
generality beyond avoiding umbilic points.  

Once one restricts to isothermic parametrizations, it becomes 
possible to give a definition for discrete analogs of minimal surfaces 
\cite{BP}.  These analogs are ``discrete isothermic'' meshes (a 
definition of this is given later in this paper).  They are 
comprised of planar quadrilaterals, which in particular have 
concircular vertices.  

By a transformation called the Lawson correspondence or $T$-transformation 
or Calapso transformation \cite{Udo2}, one can 
produce all constant mean curvature (CMC) 
$1$ surfaces in hyperbolic $3$-space $\mathbb{H}^3$ from minimal surfaces in 
$\mathbb{R}^3$.  There is a corresponding holomorphic 
representation for those surfaces as well, 
first given by Bryant \cite{Br}.  
Correspondingly, without loss of generality beyond avoiding umbilics, 
one can restrict to isothermic coordinates in this case also, 
and one has a discrete analog of CMC $1$ surfaces in $\mathbb{H}^3$, first 
found by Hertrich-Jeromin \cite{Udo1}.  

In the case of smooth surfaces there is also a holomorphic 
representation for flat (i.e. intrinsic curvature zero) 
surfaces in $\mathbb{H}^3$ \cite{GMM-first} and this also ties in to the 
above-mentioned Bryant representation, 
as there are deformations from CMC $1$ surfaces in $\mathbb{H}^3$ 
to flat surfaces via a family of linear Weingarten surfaces in $\mathbb{H}^3$ 
\cite{GMM}.  These do not include all linear Weingarten surfaces, 
but rather a certain special subclass called linear Weingarten 
surfaces of {\em Bryant type} \cite {GMM} \cite{KU}, 
so named because they have Bryant-type representations.  

Thus it is natural to wonder if flat surfaces also have a discrete 
analog, and we will see here that they do.  Once this discrete 
analog is found, a new question about "singularities on discrete flat 
surfaces" naturally presents itself, in this way: Unlike the 
smooth isothermic 
minimal surfaces in $\mathbb{R}^3$ and CMC $1$ surfaces in 
$\mathbb{H}^3$, smooth flat fronts 
have certain types of 
singularities, such as cuspidal edges and swallowtails (in 
fact, indirectly, this is what the naming "fronts" -- rather than 
"surfaces" -- indicate).  The means for 
recognizing where the singularities are on smooth flat fronts are 
clear, and one can make classifications of those 
surfaces' most generic types of 
singularities just from looking at the choices of holomorphic 
functions used in their representation \cite{KRSUY}.  However, 
in the case of discrete flat surfaces, it is not a priori clear where 
the singularities are, nor even what such a notion would 
mean.  Since one does not have first and second fundamental 
forms at one's disposal in the discrete case, one must find an alternate 
way of defining singularities.  We aim towards this by defining and 
using a discrete analog of caustics, also called focal surfaces, for 
smooth flat fronts.  For a smooth flat front, the caustic is the surface 
comprised of all the singular points on all parallel surfaces of that 
flat front.  (The parallel surfaces are also flat.)  
Thus the singular set of the flat front can be retrieved 
by taking its intersection with its caustic.  In the case of 
a smooth flat front, the caustic is again a flat surface, but this will 
not quite be the case for discrete flat surfaces.  

We will also present a number of examples of these discrete 
flat surfaces.  
In addition to the rather simple examples of discrete cylinders and 
discrete surfaces of revolution, we will also discuss a discrete flat 
surface based on the Airy equation.  This example exhibits 
swallowtail singularities and a 
Stokes phenomenon, similar to that of the analogous surface in the 
smooth case, as shown by two of the authors in \cite{SY}.  This 
last example hints at existence of a robust collection of discrete flat 
surfaces with interesting geometric properties yet to be explored.  

Thus, the purpose of this paper is to: 
\begin{enumerate}
\item provide a definition for discrete flat surfaces and 
      discrete linear Weingarten surfaces of Bryant type 
      in hyperbolic $3$-space $\mathbb{H}^3$; 
\item give properties of these surfaces that justify our choice of 
      definitions (in particular, as smooth flat fronts have extrinsic 
      curvature $1$, we identify notions of discrete extrinsic curvature 
      of discrete flat surfaces which do indeed attain the value $1$);
\item show that these surfaces have concircular quadrilaterals;
\item study examples of these surfaces, and in particular look at 
      swallowtail singularities and 
      global properties of an example related to the Airy equation;
\item give a definition of discrete caustics for discrete flat surfaces;
\item show that the caustics also have concircular quadrilaterals 
      and that they provide a means for identifying a notion of 
      singularities on discrete flat surfaces.  
\end{enumerate}
In Section \ref{section1} we describe smooth and discrete minimal 
surfaces in Euclidean $3$-space $\mathbb{R}^3$, to help motivate later definitions, 
and we also give the definition of a discrete holomorphic function, which 
will be essential to everything that follows.  
In Section \ref{section2} we describe smooth CMC $1$ surfaces, 
and flat surfaces and linear Weingarten surfaces of Bryant type 
in $\mathbb{H}^3$, again as motivational material for the 
definitions of the corresponding discrete surfaces in Section 
\ref{section3}.  We prove in Section \ref{section3} 
that discrete flat surfaces and linear Weingarten surfaces of Bryant type 
have concircular quadrilaterals.  Also, Section \ref{section3} 
provides a natural representation for discrete flat surfaces which 
gives the mapping of the surfaces as products of $2$ by $2$ matrices 
times their conjugate transposes, and we show that this representation 
applies to the case of discrete CMC $1$ surfaces as well.  The definition 
for discrete CMC $1$ surfaces is already known \cite{Udo1}, but 
the representation here for those 
surfaces is new.  In Section \ref{section4} we look at a specific 
discrete example whose smooth analog is equivalent to solutions of 
the Airy equation, and we look at the asymptotic behavior of that surface, 
which exhibits swallowtail singularities and 
a Stokes phenomenon.  In Section 
\ref{section5}, we look at normal lines to discrete flat surfaces.  
With this we can do several things.  For example, we look at parallel 
surfaces (which are also discrete flat) and show that the area of 
corresponding quadrilaterals of the normal map equals the area of 
the quadrilaterals of the surface itself, as should be expected, 
since in some sense the extrinsic curvature of the surface 
is identically equal to $1$ (note that the analogous statement is 
true for smooth surfaces with extrinsic curvature $1$, infinitesimally).   
Then, using distances from the surface's vertices to the intersection 
points of the normal lines, we consider a 
discrete analog of the extrinsic curvature and see that it is $1$ in the 
discrete case as well.  Furthermore, those intersections give us a 
means to define discrete caustics, and, as mentioned above, 
we use those caustics to study the nature of "singularities" on 
discrete flat surfaces, in the final Section \ref{section6}.  

\begin{acknowledgements}
The authors thank Udo Hertrich-Jeromin for 
fruitful discussions and valuable comments.
\end{acknowledgements}

\section{Smooth and discrete minimal surfaces in $\mathbb{R}^3$}
\label{section1}

A useful choice of coordinates for a surface is 
isothermic coordinates.  Not all surfaces have such coordinates, but CMC 
surfaces in space forms such as $\mathbb{R}^3$ and 
$\mathbb{H}^3$ do have them, 
away from umbilic points.  Isothermic coordinates will be 
of central importance in this paper.  

Another useful tool in the study of surfaces in space forms is 
the Hopf differential, which is defined as $Q = 
\langle f_{zz},N \rangle dz^2$, where the surface $f$ is a 
map from points $z$ in a portion of the complex plane $\mathbb{C}$, 
$\langle \cdot , \cdot \rangle$ is the bilinear extension of the 
metric for the ambient space form to complex vectors, and $N$ is the 
unit normal to the surface.  
When the coordinate $z$ is conformal and the surface is CMC, then $Q$ 
will be holomorphic in $z$.  Umbilic points of the surface occur 
precisely at the zeros of the Hopf differential.  

\subsection{The 
Weierstrass representation for smooth minimal surfaces}
Locally, away from umbilics, 
we can always take a smooth minimal immersion $f=f(x,y)$ 
into $\mathbb{R}^3$ to have isothermic coordinates $(x,y)$ in a domain of 
$\mathbb{R}^2$.  
Let $N$ denote the unit normal vector to $f$.  Then, setting $z=x+iy$, 
the Hopf differential becomes $Q = r dz^2$ for some real constant 
$r$, and rescaling the coordinate $z$, we may assume $r=1$.  

Let $g$ be the stereographic 
projection of the Gauss map $N$ to the complex plane, and set $g^\prime = 
dg/dz$.  As we are only concerned with the local behavior of the surface, 
and we are allowed to replace the surface with any rigid motion of it, 
we may ignore the possibility that $g$ has poles or other singularities, 
and so the map $g:\mathbb{C} \to \mathbb{C}$ is holomorphic.  Because we avoid umbilic points 
of $f$, we also know that $g^\prime$ is never zero.  Thus the Weierstrass 
representation is (with $\sqrt{-1}$ regarded as lying in the complex plane 
$\mathbb{C}$) 
\[ f = \mbox{Re} \int_{z_0}^z ( 2 g, 1-g^2,\sqrt{-1} (1+g^2) ) \omega 
\; , \;\;\; 
\omega = \frac{Q}{dg} = \frac{dz}{g^\prime} \; . \]  
Associating $(1,0,0)$, $(0,1,0)$ and $(0,0,1)$ with 
the quaternions $i$, $j$ and $k$, respectively, we have 
\begin{equation}\label{the-Udo-way} 
f_x = (i-gj) j \frac{1}{g_x} (i-g j) \; , \;\;\; 
f_y = (i-gj) j \frac{-1}{g_y} (i-g j) \; . \end{equation} 
We have converted to a formulation using quaternions here, because 
this type of formulation has been used to define discrete minimal 
surfaces in $\mathbb{R}^3$ and discrete CMC $1$ surfaces in 
$\mathbb{H}^3$, and we 
wish to make comparisons to those formulations.  

Note that by restricting to isothermic coordinates, we can then determine 
minimal surfaces by choosing just one holomorphic function $g$.  

\subsection{Discrete holomorphic functions}\label{sect:2pt2}
To define discrete minimal surfaces, we use discrete holomorphic 
functions $g=g_{m,n}:D \to \mathbb{C}$, where $D$ is the square 
integer lattice $\mathbb{Z}^2$, or a subdomain of it.  
Discrete holomorphic functions are defined as 
follows: defining the cross ratio of $g$ to be 
\[ \text{cr}_{m,n} = (g_{m+1,n}-g_{m,n}) (g_{m+1,n+1}-g_{m+1,n})^{-1} 
(g_{m,n+1}-g_{m+1,n+1}) (g_{m,n}-g_{m,n+1})^{-1} \; , \]
we say that $g$ is {\em discrete holomorphic} 
if there exists a discrete mapping $\alpha$ to $\mathbb{R}$ such that 
\begin{equation}\label{rats2} \text{cr}_{m,n} = 
\frac{\alpha_{(m,n)(m+1,n)}}{\alpha_{(m,n)(m,n+1)}} < 0 \; , 
\end{equation} 
with $\alpha_{(m,n)(m+1,n)} = \alpha_{(m,n+1)(m+1,n+1)}$ and 
$\alpha_{(m,n)(m,n+1)} = \alpha_{(m+1,n)(m+1,n+1)}$ for all 
quadrilaterals (squares with edge length $1$ and vertices in $D$.)  
See \cite{bp2}.  We call 
the discrete map $\alpha$ a {\em cross ratio factorizing 
function} for $g$.  

Note that $\alpha$ is defined on edges of $D$, not vertices.  
Note also that $\alpha$ is symmetric, that is, 
$\alpha_{(m,n,)(m+1,n)} = \alpha_{(m+1,n)(m,n)}$ 
and 
$\alpha_{(m,n,)(m,n+1)} = \alpha_{(m,n+1)(m,n)}$.  

There is a freedom of a single real factor in the choice of these 
$\alpha_{(m,n)(m+1,n)}$ and $\alpha_{(m,n)(m,n+1)}$, 
since we could replace 
all of them with $\lambda \alpha_{(m,n)(m+1,n)}$ and 
$\lambda \alpha_{(m,n)(m,n+1)}$ 
for any nonzero real constant $\lambda$, and all relevant properties 
would still hold.  Throughout this paper we 
use $\lambda$ to denote that free factor.  

In the above definition of the cross ratio, we have a product of 
four terms.  Since $g_{m,n} \in \mathbb{C}$, these terms all commute, and 
so we could have written this cross ratio simply as a product of 
two fractions.  However, when we later consider the cross 
ratio for quaternionic-valued objects or matrix-valued objects, 
commutativity no longer holds and the order of the product in 
the cross ratio becomes vital.  So, for later reference, we have 
chosen to write the cross ratio in the somewhat cumbersome way above.  

The definition above for discrete holomorphic functions is in the "broad" 
sense.  The definition in the "narrow" sense would be that 
$\text{cr}_{m,n}$ is identically $-1$ on $D$ (see 
\cite{BP}, \cite{bp2}).  
Furthermore, note that, unlike the case of smooth holomorphic functions, 
the discrete derivative or discrete integral of a discrete 
holomorphic function is generally not another 
discrete holomorphic function.

Let us exhibit some examples of discrete holomorphic functions:
\begin{enumerate}
\item Let $D=\mathbb{Z}^2 = \{ (m,n) \, | \, m,n \in \mathbb{Z} \}$, 
      and set $g_{m,n} = c (m+i n)$ for $c$ a complex constant.
\item Let $D=\mathbb{Z}^2$, 
      and set $g_{m,n} = e^{c (m+i n)}$ for $c$ a real or pure 
      imaginary constant. 
      One could also take the function $e^{c_1 m+i c_2 n}$ for 
      choices of real constants 
       $c_1$ and $c_2$ so that the cross ratio is identically $-1$, 
      giving a discrete holomorphic function in the narrow sense.
\item\label{item5} In Section \ref{section4} we will describe a 
      discrete flat surface based on a 
      discrete version of the power function $g=z^\gamma$ 
      ($\gamma \in \mathbb{R}$), which we define here.  This function 
      is discrete holomorphic in the narrow sense.  
      It is defined by the recursion 
\begin{equation}\label{eqn:alphagnm} 
\gamma \cdot g_{m,n} = 2 m 
\frac{(g_{m+1,n}-g_{m,n})(g_{m,n}-g_{m-1,n})}{g_{m+1,n}-g_{m-1,n}}
+ 2 n 
\frac{(g_{m,n+1}-g_{m,n})(g_{m,n}-g_{m,n-1})}{g_{m,n+1}-g_{m,n-1}} 
\; . \end{equation}
We start with $D = \{ (m,n) \, | \, m,n \geq 0 \}$.  
For $\gamma \in (0,2)$, the initial conditions should be 
\[ g_{0,0}=0 \; , \;\;\; g_{1,0} = 1 \; , \;\;\; g_{0,1} = i^\gamma \; . \]  
We can then use \eqref{eqn:alphagnm} to 
propagate along the positive axes $\{ g_{m,0} \}$ 
and $\{ g_{0,n} \}$ with $m > 1$ and $n>1$, respectively.  
We can then compute general $g_{m,n}$ (for both $m>0$ and $n>0$) by 
using that the 
cross ratio is always $-1$.  The $g_{m,n}$ will then 
automatically satisfy the recursion relation \eqref{eqn:alphagnm}.  
This definition of the discrete power function can be found in 
Bobenko \cite{Bob-new}.  (It is also found in a recently published 
textbook \cite{BS}.)  Agafonov \cite{Ag} showed that these discrete 
power functions are embedded in wedges 
(see Figure \ref{fct43}), and 
are Schramm circle packings (see \cite{Sch}).  Note that, for 
$m \in \mathbb{Z}$ and $m \geq 1$, 
\begin{equation}\label{eqn-agafonovaxes} g_{2m,0} = 
\dfrac{-m \left(\tfrac{\gamma}{2} 
\right)_{m}}{\left (-\tfrac{\gamma}{2} \right)_{m+1}} \; , \;\;\; 
   g_{2m+1,0} = \dfrac{-\left( \tfrac{\gamma}{2} 
\right)_{m+1}}{\left( -\tfrac{\gamma}{2} \right)_{m+1}} 
\; , \;\;\;    g_{0,n} = i^\gamma g_{n,0} \; , 
\end{equation} 
where $(a)_m = a (a+1)...(a+m-1)$ 
denotes the Pochhammer symbol, 
and a closed expression for general $g_{m,n}$ is still unknown.  
We explore this difference 
equation \eqref{eqn:alphagnm} in more detail in Appendix \ref{appendix} 
at the end of this paper.  
\end{enumerate}

\begin{figure}[h]
\begin{center}
\begin{tabular}{cc}
 \includegraphics[width=4cm]{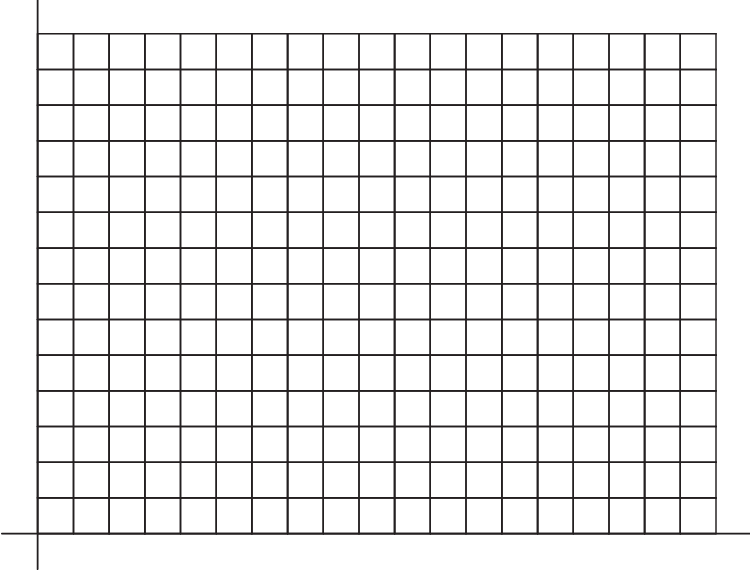} \qquad &
\qquad  \includegraphics[width=4cm]{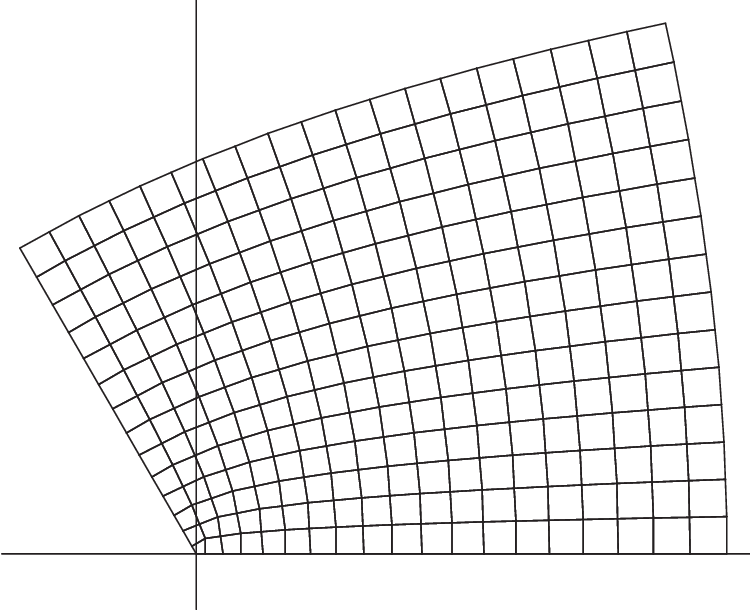} 
\end{tabular}
\end{center}
\caption{Domain (left) and image (right) 
for the discrete power function $z^{4/3}$.}
\label{fct43}
\end{figure}

\subsection{Discrete minimal surfaces}\label{sect:2pt3}
The representation \eqref{the-Udo-way} for smooth minimal surfaces above 
suggests that the definition for discrete minimal surfaces is (see 
\cite{Udo1}) 
\begin{equation}\label{starstar} 
f_q - f_p = (i-g_p j) j \frac{\alpha_{pq}}{g_q-g_p} (i-g_q j) \; , 
\end{equation} 
where $g:D \to \mathbb{C}$ is a discrete holomorphic function with cross ratio 
factorizing function $\alpha$, and 
$p$ and $q$ are either $(m,n)$ and $(m+1,n)$, or $(m,n)$ and $(m,n+1)$.  
This defines the surface $f$ up to translations of 
$\mathbb{R}^3$.  The freedom 
of scaling of $\alpha$ leads to homotheties of $f$.  

As in the smooth case, where we avoided umbilics, and thus $g^\prime$ 
was never zero, we will make the following assumption throughout this paper: 
\[ \text{Assumption:} \;\;\; g_q-g_p \neq 0 \; . \]

\begin{example}
The discrete holomorphic function $c (m+in)$ for $c$ a complex constant 
will produce a minimal surface called a discrete Enneper surface, 
and graphics for this surface can be seen in \cite{BP}.  
\end{example}

\begin{example}
The discrete holomorphic function $e^{c_1 m+i c_2 n}$ for 
choices of constants $c_1$ and $c_2$ so that the cross ratio is 
identically $-1$ will produce a minimal surface called a discrete 
catenoid, and graphics for this surface also can be seen in \cite{BP}.  
\end{example}

\section{Smooth CMC $1$ surfaces, flat fronts, and 
linear Weingarten surfaces in $\mathbb{H}^3$}\label{section2}

\subsection{Smooth CMC $1$ surfaces}\label{section3pt1}
Similarly to the case of minimal surfaces, we can 
describe smooth and discrete CMC $1$ surfaces in 
$\mathbb{H}^3$.  Hyperbolic $3$-space 
$\mathbb{H}^3$, considered in Minkowski 
$4$-space 
$\mathbb{R}^{3,1} = \{(x_0,x_1,x_2,x_3) \, | \, x_j \in 
\mathbb{R} \}$ 
(with Minkowski metric $-dx_0^2+dx_1^2+dx_2^2+dx_3^2$), is 
\[ \mathbb{H}^3 = \{ (x_0,x_1,x_2,x_3) \in \mathbb{R}^{3,1} \, | 
\, x_0>0 , x_0^2-x_1^2-x_2^2-x_3^2 = 1 \}
\approx \]\[ \left\{ \left. X = 
\begin{pmatrix} x_0+x_3 & x_1+ix_2 \\ x_1-ix_2 & 
x_0-x_3 \end{pmatrix} \right| \, \text{tr}(X)>0, \det X = 1 \right\} 
= \{ F \cdot \bar F^T \, | \, F \in SL_2(\mathbb{C}) \} \; , \] where the 
superscript $T$ denotes transposition.  
 
A smooth isothermically-parametrized CMC $1$ surface (away from 
umbilic points), has the Bryant equation \cite{Br} 
\begin{equation}\label{eqn:bryant} 
dF = F \begin{pmatrix}  g & -g^2 \\ 1 & -g \end{pmatrix} 
\frac{dz}{g^\prime} \; , \;\;\; F \in SL_2(\mathbb{C}) \; , \end{equation} 
where $g$ is a holomorphic function with nonzero derivative, 
and the surface is then \[ f_1 = F \cdot \bar F^T \in \mathbb{H}^3 \; . \]  

\subsection{Smooth flat fronts}\label{section3pt2}
Starting with a smooth CMC $1$ surface $f_1$ with lift $F$ as above, define 
\begin{equation}\label{EinrelationtoF} 
E = F \cdot \begin{pmatrix} 1 & g \\ 0 & 1 \end{pmatrix} \; . \end{equation}
A flat front is then given by 
\[ f_0 = E \cdot \bar E^T \in \mathbb{H}^3 \; , \]
with unit normal vector field 
\begin{equation}\label{flatsurfnormal}
N = E \cdot \begin{pmatrix} 1 & 0 \\ 0 & -1 \end{pmatrix}
\cdot \bar E^T \; . \end{equation}
We know this surface $f_0$ is flat, because (see \cite{GMM}) 
\begin{equation}\label{E-eqn-for-flat-guys} 
dE = E \begin{pmatrix}  0 & g^\prime \\ (g^\prime)^{-1} & 0 
\end{pmatrix} dz \; . \end{equation} 
This surface with singularities is actually a front, 
because $|g^\prime|^{2} + |g^\prime|^{-2} > 0$, which means 
that the associated 
Sasakian metric is positive definite.  (See Theorem 2.9 in 
\cite{KUY}.)  The notion of fronts is important in the study of smooth 
flat surfaces with singularities in 
$\mathbb{H}^3$.  For example, it is a 
necessary notion for considering the caustic of a flat surface 
with singularities.  However, it will not play such a direct 
role in our considerations on discrete surfaces here.  Thus, from 
here on out, we will simply consider smooth flat surfaces with 
singularities, and sometimes will even just call them {\em flat 
surfaces} even though they might have singularities.  When the 
front property is actually playing a role, we shall parenthetically 
refer to the word "front".  For more 
information about flat fronts, see \cite{KRSUY}, \cite{KRUY1} 
and \cite{KUY}.  

\begin{remark}\label{KRSUY-remark-on-sings}
Because the off-diagonal terms $g^\prime$ and $(g^\prime)^{-1}$ 
in \eqref{E-eqn-for-flat-guys} 
are inverse to each other, the conditions in \cite{KRSUY} for having 
singular points, cuspidal edges and swallowtails 
on $f_0$ simplify to this: 
\begin{enumerate}
\item singular points occur precisely at points where $|g^\prime| = 1$; 
\item a singular point is a cuspidal edge if and only if 
$\text{Im}(\tfrac{g^{\prime\prime}}{g^\prime}) 
\neq 0$ ("Re" and "Im" denote the real and imaginary parts, 
respectively); 
\item a singular point is a swallowtail if and only if 
$g^{\prime\prime} \neq 0$ and 
$\text{Im}(\tfrac{g^{\prime\prime}}{g^\prime}) = 0$ and 
$\text{Re}((\tfrac{g^{\prime\prime}}{g^\prime})^\prime) \neq 0$.   
\end{enumerate}
The condition that 
$\text{Im}(\tfrac{g^{\prime\prime}}{g^\prime}) = 0$ holds 
at some point along a singular curve $|g^\prime|=1$ is equivalent 
to the curve having a vertical tangent line at that point.  
\end{remark}

\subsection{The hyperbolic Schwarz map and 
a special coordinate $w$}\label{remark-on-q}
For later reference, we can take a new coordinate $w$ such that \[
dw = \frac{1}{g^\prime} dz \; , \] which is locally well-defined, 
and still conformal, although not necessarily isothermic.  This gives 
\begin{equation}\label{eqn:q} 
E^{-1} dE = \begin{pmatrix} 0 & q \\ 1 & 0 \end{pmatrix} dw \; , 
\end{equation} with 
\[ q|_{w(z)} = (g^\prime)^2 = \left. \frac{dg}{dw}\right|_{w(z)} \; . 
\]  

The reason for changing variables from $z$ to $w$ is 
that Equation \eqref{eqn:q} now becomes 
\begin{equation}\label{eqn:q2}
\tfrac{d^2}{dw^2} u - q(w) \cdot u = 0 
\end{equation}
with $f_0 = E \bar E^T$ the hyperbolic Schwarz map (see \cite{SY}), where 
\begin{equation}\label{eqn:star-s} E = \begin{pmatrix}
u_1 & \tfrac{d}{dw} u_1 \\ u_2 & \tfrac{d}{dw} u_2
\end{pmatrix} \; , \end{equation} 
with functions $u_1$, $u_2$ that are linearly independent solutions of 
Equation \eqref{eqn:q2} chosen so that the constant 
$u_1 \tfrac{d}{dw}(u_2)-u_2 \tfrac{d}{dw}(u_1)$ will 
be $1$.  This equation 
\eqref{eqn:q2} with $q(w)=w$ is the well-known Airy equation.  

\begin{remark}
Using $q$ and $w$, the conditions in Remark \ref{KRSUY-remark-on-sings} 
become: 
\begin{enumerate}
\item Singular points: $|q|=1$, 
\item Cuspidal edge points: 
$\text{Im}(\tfrac{q_w}{q^{3/2}}) \neq 0$, 
\item Swallowtail points: 
$q_w \neq 0$, 
$\text{Im}(\tfrac{q_w}{q^{3/2}}) = 0$
and 
$\text{Re}(\tfrac{2 q_{ww} q - 3 q_w^2}{q^{3}}) \neq 0$.  
\end{enumerate}
\end{remark}

\subsection{Smooth linear Weingarten surfaces of Bryant 
type}\label{smoothLinWein}
We will now give a deformation through linear Weingarten surfaces 
between the surfaces $f_1$ and $f_0$ described in Sections 
\ref{section3pt1} and \ref{section3pt2}.  This 
deformation was first introduced in \cite{GMM}.  There are numerous 
ways to choose the deformation, and no one way is geometrically 
more canonical than any other.  We will soon come back to this 
issue (Section \ref{rem:ambiguityofGMM}).  However, for now we 
will simply fix one choice for the 
deformation -- the one that deforms the $f_1$ and $f_0$ as given 
in Sections \ref{section3pt1} and \ref{section3pt2} into each other, in 
accordance with the notations of previous papers (\cite{KRSUY}, 
\cite{KRUY1}, \cite{KRUY2}, \cite{KUY}).  

This particular choice 
will suffice when we switch to investigating discrete surfaces later.  
In fact, the resulting classes of discrete 
flat surfaces and discrete linear Weingarten surfaces 
of Bryant type, although defined in terms of the deformation, do not 
actually depend on the choice of 
deformation.  (We say more about this in 
Section \ref{section:dlwsobt}.)  
Hence the theorems we prove about these surfaces 
also are independent of the choice of deformation.  

We shall refer to this choice of deformation as the 
"first Weingarten family" (of either $f_1$ or $f_0$), 
and the procedure that we follow 
for constructing it is as follows.  Following \cite{GMM}, 
\begin{itemize}
\item we convert the $E$ in \cite{GMM}, actually notated as 
``$g$'' there, to $(E^T)^{-1}$, 
\item then changing the holomorphic function 
$h$ in \cite{GMM} to $-g$ for the function $g$ given here, 
\item and allowing 
$-\omega$ in \cite{GMM} to become $dz/g^\prime$ for the function 
$g$ here, 
\item and also changing $f$ and $N$ to $(f^T)^{-1}$ and 
$(N^T)^{-1}$, respectively, 
\end{itemize}
a linear Weingarten 
surface of Bryant type in $\mathbb{H}^3$ satisfying 
(see also \cite{KU}) 
\begin{equation}\label{eqn:linweineqn} 
2 t (H-1)+(1-t) K = 0 \; , \end{equation} 
where $H$ and $K$ are the mean and 
intrinsic curvatures, respectively, is 
\[ f_t = (EL)\overline{(EL)}^T \; . \]  Here $E$ satisfies 
\eqref{E-eqn-for-flat-guys} and 
\[ L = \begin{pmatrix} \beta & -t g \beta \\ 0 & \beta^{-1} \end{pmatrix} 
\; , \;\;\; \beta = 
\sqrt{\frac{1 + t g \bar g}{1 + t^2 g \bar g}} \in \mathbb{R} \; . \] 
All linear Weingarten surfaces satisfying \eqref{eqn:linweineqn} 
(i.e. of Bryant type) can be constructed in this way (see \cite{GMM}).  

When $t = 0$, we use the frame $E = E \cdot (L|_{t=0}) = 
F \cdot \begin{pmatrix} 1 & g \\ 0 & 1 \end{pmatrix}$, 
giving a flat surface $f_0$, as in Section \ref{section3pt2}.  
When $t = 1$, we use the frame $F = E \cdot (L|_{t=1}) = 
E \cdot \begin{pmatrix} 1 & -g \\ 0 & 1 \end{pmatrix}$, 
giving a CMC $1$ surface $f_1$, as 
in Section \ref{section3pt1}.  Thus we have a deformation 
through linear Weingarten surfaces in 
$\mathbb{H}^3$, from CMC $1$ surfaces in 
$\mathbb{H}^3$ to flat surfaces in $\mathbb{H}^3$.  
(See also \cite{KU}.)  

\subsection{Geometric non-uniqueness 
of the deformation in \cite{GMM}}
\label{rem:ambiguityofGMM}
We now explain in more detail why there is non-uniqueness 
for the choice of deformation through linear Weingarten surfaces 
of Bryant type.  The deformation between smooth flat surfaces and 
smooth CMC $1$ surfaces through linear Weingarten surfaces, 
given in Section \ref{smoothLinWein}, is not 
uniquely determined in any geometric sense, because of ambiguities 
in the choice of Weierstrass data.  We illustrate this with 
two lemmas, both of which are easily verified:  

\begin{lemma}\label{lem:3pt6}
Given a smooth isothermically-parametrized CMC $1$ surface $f_1$ in 
$\mathbb{H}^3$ with lift $F$ and Weierstrass data $g$, the transformation 
\[ F \to F \cdot B \; , \;\;\; B = 
\begin{pmatrix}
p & q \\ -\bar q & \bar p
\end{pmatrix} \in \text{SU}_2 
\] will not change the resulting surface $f_1 = F \cdot \bar F^T = 
FB \cdot \overline{FB}^T$, and will change the Weierstrass data 
by \[ g \to \hat g = \frac{\bar p g - q}{\bar q g + p} \; . \]
\end{lemma}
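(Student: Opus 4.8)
\emph{Proof plan.} The plan is to verify both assertions by direct computation, taking the Bryant equation \eqref{eqn:bryant} as the defining relation between the lift $F$ and the holomorphic data $g$ (here $B$ is a constant matrix). The invariance of the surface is immediate: since $B \in \mathrm{SU}_2$ we have $B\bar B^T = I$, hence $(FB)\overline{(FB)}^T = F B \bar B^T \bar F^T = F \bar F^T = f_1$, and $\det(FB) = \det F \cdot \det B = 1$, so $FB$ is again an $SL_2(\mathbb{C})$-valued lift. Thus only the change in Weierstrass data requires work.

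For that, I set $\hat F = FB$ and differentiate. Using \eqref{eqn:bryant} and $\det B = 1$ (so that $B^{-1} = \begin{pmatrix} \bar p & -q \\ \bar q & p \end{pmatrix}$), we get $d\hat F = (dF)B = \hat F\,\Bigl(B^{-1}\begin{pmatrix} g & -g^2 \\ 1 & -g \end{pmatrix}B\Bigr)\frac{dz}{g'}$. The key observation is that the Bryant matrix is rank one, $\begin{pmatrix} g & -g^2 \\ 1 & -g \end{pmatrix} = \begin{pmatrix} g \\ 1 \end{pmatrix}\begin{pmatrix} 1 & -g \end{pmatrix}$, so the conjugation by $B$ factors as $\bigl(B^{-1}\begin{pmatrix} g \\ 1 \end{pmatrix}\bigr)\bigl(\begin{pmatrix} 1 & -g \end{pmatrix}B\bigr)$. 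A one-line computation then gives $B^{-1}\begin{pmatrix} g \\ 1 \end{pmatrix} = (\bar q g + p)\begin{pmatrix} \hat g \\ 1 \end{pmatrix}$ and $\begin{pmatrix} 1 & -g \end{pmatrix}B = (\bar q g + p)\begin{pmatrix} 1 & -\hat g \end{pmatrix}$, with $\hat g = (\bar p g - q)/(\bar q g + p)$ appearing exactly as claimed; multiplying these back together yields $B^{-1}\begin{pmatrix} g & -g^2 \\ 1 & -g \end{pmatrix}B = (\bar q g + p)^2\begin{pmatrix} \hat g & -\hat g^2 \\ 1 & -\hat g \end{pmatrix}$.

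It remains to absorb the scalar factor $(\bar q g + p)^2$. Differentiating $\hat g$ by the quotient rule and invoking the $\mathrm{SU}_2$ relation $p\bar p + q\bar q = 1$, the $g$-terms in the numerator cancel and one finds $\hat g' = g'/(\bar q g + p)^2$, so that $(\bar q g + p)^2\,dz/g' = dz/\hat g'$. Substituting, $d\hat F = \hat F \begin{pmatrix} \hat g & -\hat g^2 \\ 1 & -\hat g \end{pmatrix}\frac{dz}{\hat g'}$, which is precisely \eqref{eqn:bryant} with $g$ replaced by $\hat g$; moreover $\hat g$ is holomorphic with $\hat g'$ nowhere zero because $g$ is and $g' \ne 0$. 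Hence $\hat F$ is a Bryant lift with Weierstrass data $\hat g$ representing the same surface $f_1$, proving the lemma. There is no genuine obstacle; the only point needing care is the bookkeeping of the scalar factors $(\bar q g + p)$, whose cancellation against the change in $1/g'$ is forced exactly by $|p|^2 + |q|^2 = 1$.
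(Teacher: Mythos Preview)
Your proof is correct. The paper itself does not give a proof of this lemma, stating only that it is ``easily verified''; your direct computation---exploiting the rank-one factorization of the Bryant matrix and the $\mathrm{SU}_2$ determinant identity to match the scalar factor $(\bar q g + p)^2$ against $g'/\hat g'$---is exactly the kind of verification the authors had in mind.
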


\begin{remark}
By Lemma \ref{lem:3pt6}, different choices for $B$ do not affect $f_1$.  
However, when $B$ is not diagonal, the transformation in the above lemma
generally will result in a different deformation $f_t$ through 
linear Weingarten surfaces for $t < 1$, and also in a different 
flat surface $f_0$.
\end{remark}
  
\begin{lemma}\label{lem:3pt7}
Given a flat surface $f_0$ with Weierstrass data $g^\prime$ and lift 
$E$ as in Equation \eqref{E-eqn-for-flat-guys}, then the transformation 
\[ g \to g+a \; , \] 
followed by the transformation 
\[ E \to E \cdot B \; , \] where $B \in \text{SU}_2$ is 
either diagonal or off-diagonal and 
$a$ is any complex constant, will not change the resulting surface 
$f_0 = E \cdot \bar E^T = EB \cdot \overline{EB}^T$.  
\end{lemma}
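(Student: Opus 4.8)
The plan is to treat the two transformations separately, show that each by itself fixes $f_0$, and then compose them. The first transformation, $g\mapsto g+a$, is in fact invisible to the construction: the matrix-valued $1$-form on the right-hand side of \eqref{E-eqn-for-flat-guys} involves $g$ only through $g^\prime = dg/dz$, and $(g+a)^\prime = g^\prime$ since $a$ is constant, so the system \eqref{E-eqn-for-flat-guys} is literally unchanged. Hence its solution $E$ (with a fixed initial value) is unchanged, and so are $f_0 = E\cdot\bar E^T$ and the normal $N = E\begin{pmatrix}1 & 0\\0 & -1\end{pmatrix}\bar E^T$. It is worth stressing that one must regard $E$ — the solution of \eqref{E-eqn-for-flat-guys} — as the primitive datum here. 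If one instead re-derived $E$ from a Bryant lift $F$ via \eqref{EinrelationtoF}, then $g\mapsto g+a$ would force $F\mapsto\begin{pmatrix}1 & a\\0 & 1\end{pmatrix}F\begin{pmatrix}1 & -a\\0 & 1\end{pmatrix}$, hence $E\mapsto\begin{pmatrix}1 & a\\0 & 1\end{pmatrix}E$, which moves $f_0$ by a generally nontrivial isometry of $\mathbb{H}^3$ rather than fixing it; this is exactly the kind of ambiguity the remark after Lemma \ref{lem:3pt6} refers to.

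For the second transformation, $E\mapsto E\cdot B$ with $B\in\mathrm{SU}_2$, the key point is simply that $\bar B^T = B^{-1}$, i.e. $B\bar B^T = I$, whence
\[ (EB)\cdot\overline{(EB)}^T = E\,B\,\bar B^T\,\bar E^T = E\cdot\bar E^T = f_0 \, , \]
which is already the asserted equality — and it holds for any $B\in\mathrm{SU}_2$. The hypothesis that $B$ be diagonal or off-diagonal is what ensures, in addition, that $EB$ is again a legitimate flat-front lift, i.e. that the conjugated connection form $\Omega_B = B^{-1}\begin{pmatrix}0 & g^\prime\\(g^\prime)^{-1} & 0\end{pmatrix}B$ is once more off-diagonal with mutually inverse off-diagonal entries. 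Computing the $(1,1)$-entry of $\Omega_B$ shows it vanishes for all values of $g^\prime$ precisely when $B$ is diagonal or off-diagonal: for $B=\begin{pmatrix}\lambda & 0\\0 & \bar\lambda\end{pmatrix}$ with $|\lambda|=1$ one gets $\Omega_B=\begin{pmatrix}0 & \lambda^{-2}g^\prime\\ \lambda^{2}(g^\prime)^{-1} & 0\end{pmatrix}$ (so the new Weierstrass function has derivative $\lambda^{-2}g^\prime$ and $N$ is unchanged), and for $B=\begin{pmatrix}0 & q\\-\bar q & 0\end{pmatrix}$ with $|q|=1$ one gets $\Omega_B=\begin{pmatrix}0 & -q^{2}(g^\prime)^{-1}\\ -\bar q^{2}g^\prime & 0\end{pmatrix}$ (new derivative a unimodular multiple of $(g^\prime)^{-1}$, and $N\mapsto -N$). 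Composing this with the first transformation completes the proof.

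I do not anticipate a genuine obstacle — this is the sort of identity the authors rightly call "easily verified." The only matters demanding a little care are keeping straight which object is held fixed (as flagged in the first paragraph: $E$, not $F$) and the short linear-algebra computation singling out the diagonal and off-diagonal elements of $\mathrm{SU}_2$ as exactly those whose conjugation action preserves the off-diagonal form of the connection $1$-form; both reduce to one-line checks.
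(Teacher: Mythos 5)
Your proof is correct, and since the paper offers no proof of this lemma beyond declaring it ``easily verified,'' your direct computation --- $g\mapsto g+a$ leaves the system \eqref{E-eqn-for-flat-guys} literally unchanged because it depends only on $g^\prime$, and $(EB)\overline{(EB)}^T=E\bar E^T$ because $B\bar B^T=I$ for $B\in\mathrm{SU}_2$ --- is exactly the intended verification. Your additional observation that the diagonal/off-diagonal hypothesis serves only to keep the conjugated connection form in the shape \eqref{E-eqn-for-flat-guys} (and your note on how $N$ transforms) is a correct and useful supplement that is consistent with the remark following the lemma.
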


\begin{remark}
Under the transformations of $E$ and $g$ in Lemma \ref{lem:3pt7}, 
$f_0$ is not affected.  However, when $B$ is off-diagonal or $a$ is 
not zero, these transformations 
generally will result in a different deformation $f_t$ through 
linear Weingarten surfaces for $t > 0$.  In particular, the CMC $1$ 
surface $f_1$ generally will change.  
\end{remark}

The two lemmas and two remarks above show that the 
"first Weingarten family" deformation will change when different 
Weierstrass data is used, even when the original surface under 
consideration does not change.  

To demonstrate that other choices actually do give 
different deformations, we will also consider 
a deformation we call the 
"second Weingarten family", given by starting with a 
flat surface 
$f_0$ with given lift $E$, and then using the lift $EB$ for an 
off-diagonal $B \in \text{SU}_2$ instead to make the deformation 
through linear Weingarten surfaces of Bryant type.  
(Any choice of off-diagonal $B \in 
\text{SU}_2$ will result in the same deformation.)  Regardless of 
whether $E$ or $EB$ is used, we have the same surface $f_0$, but 
$E$ and $EB$ give opposite orientations for the normal vector 
to $f_0$, and we are interested in this particular choice for 
a second deformation precisely because of this 
orientation-reversing property.  The frames 
$E$ and $EB$ give different 
families of linear Weingarten surfaces when $t > 0$.  
Such different deformations of surfaces can be seen in 
Figures \ref{hypcmc} (first Weingarten family) and \ref{hypcmc2} 
(second Weingarten family), 
and also in Figures \ref{airycmc} (first Weingarten family) 
and \ref{airycmc2} (second Weingarten family).  

\subsection{The deformations with respect to the coordinate $w$}
The first Weingarten family is 
\[ f_t^{(1)} = (EL)(\overline{EL})^T \; , \] 
where $L$ is as in Section \ref{smoothLinWein} and $E$ solves Equation 
\eqref{E-eqn-for-flat-guys}.  
In terms of the new coordinate $w$ given in Section 
\ref{remark-on-q}, since $g = \int q dw$, $L$ takes the form 
\[ L = \begin{pmatrix}
\beta & -t \beta \cdot \int q dw \\ 0 & \beta^{-1}  
\end{pmatrix} \; , \;\;\; \beta = \beta(w) = 
\sqrt{\frac{1+t |\int q dw|^2}{1+t^2 |\int q dw|^2}} 
\; . \]

A second Weingarten family $f_t^{(2)}$ can be given by taking 
\[ B = \begin{pmatrix}
0 & 1 \\ -1 & 0
\end{pmatrix} \; , \] and then 
\[ f_t^{(2)} = E B \hat L (\overline{E B \hat L})^T \; , \] 
where 
\[ \hat L = \begin{pmatrix}
\hat \beta & -t h \hat \beta \\ 0 & \hat \beta^{-1}  
\end{pmatrix} \; , \;\;\; \hat \beta = 
\sqrt{\frac{1+t h \bar h}{1+t^2 h \bar h}} 
\; , \;\;\; h = \int \frac{-1}{g^\prime} dz \; , \]
and $E$ again solves \eqref{E-eqn-for-flat-guys}.  Note that there is 
freedom of choice of additive constant in the definition of $h$, 
and different constants will give different deformations.  

In terms of the new coordinate $w$ 
satisfying $dw = \frac{1}{g^\prime} dz$ (and 
$q = (g^\prime)^2$) given in 
Section \ref{remark-on-q}, a second Weingarten 
family $f^{(2)}_t$ has a particularly nice expression for 
its singular set:  

\begin{lemma}
The second Weingarten family $f^{(2)}_t$ taken by choosing 
$h = - w$ is singular along the curve
\[ |q|^2 (1+t |w|^2)^4-(1-t)^2 = 0 \; . \]  
\end{lemma}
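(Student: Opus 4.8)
The plan is to realize $f^{(2)}_t$ in the form $\Phi\,\bar\Phi^T$ with $\Phi\in SL_2(\mathbb{C})$ and to detect singular points directly from the Maurer--Cartan form $\omega=\Phi^{-1}d\Phi$. Taking $h=-w$ (legitimate since $dw=dz/g^\prime$), set
\[
\Phi=E\,B\,\hat L,\qquad
B=\begin{pmatrix}0&1\\-1&0\end{pmatrix},\qquad
\hat L=\begin{pmatrix}\hat\beta & tw\hat\beta\\ 0 & \hat\beta^{-1}\end{pmatrix},\qquad
\hat\beta=\sqrt{\frac{1+t|w|^2}{1+t^2|w|^2}},
\]
where $E$ solves \eqref{eqn:q}. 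Since $d(F\bar F^T)=F(\omega+\bar\omega^T)\bar F^T$ for $\omega=F^{-1}dF$, and the induced hyperbolic metric is $-\det(\omega+\bar\omega^T)$, the map $f^{(2)}_t=\Phi\bar\Phi^T$ fails to be an immersion exactly where $\omega+\bar\omega^T$ annihilates some nonzero real tangent vector.

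First I would reduce this to an algebraic condition on $\omega$. Writing $\omega=\Omega'\,dw+\Omega''\,d\bar w$, with $\Omega',\Omega''$ trace-free since $\Phi\in SL_2(\mathbb C)$, and setting $M:=\Omega'+\overline{\Omega''}^{\,T}=\begin{pmatrix}\alpha&\beta\\ \gamma&-\alpha\end{pmatrix}$, a real tangent vector $\xi\,\partial_w+\bar\xi\,\partial_{\bar w}$ lies in $\ker df^{(2)}_t$ iff $\xi M+\bar\xi M^{*}=0$. Hence the singular set is exactly the locus where $M$ and $M^{*}$ are linearly dependent over $\mathbb C$: when $\alpha\neq0$ this is the single equation $\alpha\bar\beta=\bar\alpha\gamma$ (which automatically forces $|\beta|=|\gamma|$), and when $\alpha=0$ it is $|\beta|=|\gamma|$.

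Next I would compute $M$. Because $B$ is constant, $\omega=\hat L^{-1}B^{-1}\begin{pmatrix}0&q\\1&0\end{pmatrix}B\,\hat L\,dw+\hat L^{-1}d\hat L$; conjugation by $B$ sends $\begin{pmatrix}0&q\\1&0\end{pmatrix}$ to $\begin{pmatrix}0&-1\\-q&0\end{pmatrix}$, and the only $d\bar w$-dependence enters through $\kappa:=\partial_w\log\hat\beta$, which simplifies to the convenient closed form $\kappa=\dfrac{t(1-t)\bar w}{2D_1D_2}$ with $D_1=1+t|w|^2$, $D_2=1+t^2|w|^2$. Substituting and clearing the common positive factor $(D_1D_2)^{-1}$, the entries of $M$ become
\[
\alpha=\tfrac{t}{D_1D_2}\bigl(wD_1^2q+(1-t)\bar w\bigr),\quad
\beta=\tfrac{1}{D_1D_2}\bigl(t^2w^2D_1^2q-(1-t)\bigr),\quad
\gamma=\tfrac{1}{D_1D_2}\bigl(t^2(1-t)\bar w^2-D_1^2q\bigr),
\]
where the form of $\beta$ relies on the identity $-D_2^2+tD_1D_2+t^2(1-t)|w|^2=-(1-t)$. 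Plugging these into $\alpha\bar\beta-\bar\alpha\gamma$, the cross terms cancel in pairs and the remainder collapses to $\dfrac{t}{(D_1D_2)^2}\,\bar w\,D_2\bigl(|D_1^2q|^2-(1-t)^2\bigr)$. Since $D_1,D_2>0$ and $|D_1^2q|^2=(1+t|w|^2)^4|q|^2$, this vanishes precisely when $w=0$ or $|q|^2(1+t|w|^2)^4=(1-t)^2$; and at $w=0$ one has $\alpha=0$, $\beta=-(1-t)$, $\gamma=-q$, so the $\alpha=0$ criterion $|\beta|=|\gamma|$ there reads $|q|=1-t$, i.e.\ exactly the equation $|q|^2(1+t|w|^2)^4=(1-t)^2$ at $w=0$. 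Thus $w=0$ contributes nothing extra and the singular set is precisely $\{\,|q|^2(1+t|w|^2)^4-(1-t)^2=0\,\}$.

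The main obstacle is bookkeeping: getting the characterization in terms of linear dependence of $M$ and $M^{*}$ right — in particular, treating the $\alpha=0$ stratum with care, since the bare equation $\alpha\bar\beta=\bar\alpha\gamma$ overcounts there, as the $w=0$ discussion shows — and then carrying the $\hat\beta$-derivatives through the conjugation by $B$ cleanly enough that the two collapses ($-D_2^2+tD_1D_2+t^2(1-t)|w|^2=-(1-t)$ and the final cancellation in $\alpha\bar\beta-\bar\alpha\gamma$) become visible rather than being buried in a brute-force expansion. As a sanity check, $t=0$ gives $|q|=1$, the singular set of the flat surface $f_0$, while $t=1$ gives no singular points, consistent with $f^{(2)}_1$ being CMC $1$.
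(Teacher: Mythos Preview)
Your argument is correct. Both approaches compute the degeneracy locus of the induced metric, but by different routes. The paper writes $f^{(2)}_t$ explicitly as $EB\begin{pmatrix}(1+t^2|w|^2)/(1+t|w|^2) & -t\bar w\\ -tw & 1+t|w|^2\end{pmatrix}\bar B^T\bar E^T$ (using the solutions $u_1,u_2$ of \eqref{eqn:q2}), expands the first fundamental form as $\mathfrak{A}\,dw^2+\bar{\mathfrak{A}}\,d\bar w^2+2\mathfrak{B}\,dw\,d\bar w$, and then evaluates the discriminant $\mathfrak{B}^2-\mathfrak{A}\bar{\mathfrak{A}}$, which factors as a perfect square times $\bigl(|q|^2(1+t|w|^2)^4-(1-t)^2\bigr)^2$; the Wronskian identity $u_1u_2'-u_2u_1'=1$ is invoked at the end. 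Your approach sidesteps the explicit solutions entirely by working with the Maurer--Cartan form $\omega=\Phi^{-1}d\Phi$ and recasting degeneracy as $\mathbb{C}$-linear dependence of $M$ and $M^*$, which reduces to the single scalar equation $\alpha\bar\beta=\bar\alpha\gamma$ (plus the $\alpha=0$ stratum handled separately). This is arguably more structural---it never needs $u_1,u_2$ and isolates the two algebraic collapses cleanly---while the paper's route has the virtue of exhibiting the discriminant as a manifest nonnegative quantity. One small correction to your closing sanity check: at $t=1$ the locus is $q=0$, not empty; the paper notes this as ``$f^{(2)}_1$ could be singular only at points where $q=0$''.
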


Note that, in particular, $f^{(2)}_1$ could be singular 
only at points where $q=0$.  

\begin{proof}
Note that 
\[ (EB)^{-1} d(EB) = - \begin{pmatrix}
0 & 1 \\ q & 0 
\end{pmatrix} dw \] 
and 
\[ f_t^{(2)} = 
\begin{pmatrix}
x_0+x_3 & x_1+ i x_2 \\ x_1 - i x_2 & x_0-x_3 
\end{pmatrix} = 
E B 
\begin{pmatrix}
\tfrac{1+t^2|w|^2}{1+t|w|^2} & - t \bar w \\ 
-t w & 1+t|w|^2 
\end{pmatrix}
\bar B^T \bar E^T \; , 
\]
where the $x_j$ are now considered as real-valued 
functions of $w$ and $\bar w$ (and also of $t$).  
Substituting $dx_k = \tfrac{\partial x_k}{\partial w} dw + 
\tfrac{\partial x_k}{\partial \bar w} d\bar w$ 
into the Minkowski norm 
$dx_1^2+dx_2^2+dx_3^2-dx_0^2$, we find that 
\[ {\mathfrak A} dw^2+\bar {\mathfrak A} d\bar w^2+2 
{\mathfrak B} dwd\bar w = dx_1^2+dx_2^2+dx_3^2-dx_0^2 \]
has discriminant 
\[ {\mathfrak B}^2 - {\mathfrak A} \bar {\mathfrak A} = 
|u_1 \tfrac{d}{dw}(u_2)-u_2 \tfrac{d}{dw}(u_1)|^4 
\left( |q|^2 (1+t|w|^2)^4 - (1-t)^2 \right)^2 (1+t|w|^2)^{-4} \geq 0 
\; , \]
where $u_1$ and $u_2$ are as in Section \ref{remark-on-q}.  
Since $u_1 \tfrac{d}{dw}(u_2)-u_2 \tfrac{d}{dw}(u_1)=1$, the 
proof is completed.  
\end{proof}

\subsection{Examples}  We now give some examples.  

\begin{example}\label{examplewithqconstant}
Take any constant $q \in \mathbb{C} \setminus \{ 0 \}$ in Equation 
\eqref{eqn:q}.  Then we can take 
$g = \sqrt{q} z = q w$, and so both the coordinates 
$z$ and $w$ will be isothermic if $q \in \mathbb{R}$, 
and we now assume $q$ is a positive real.  Let us use the 
coordinate $w$, and then $E$ as in Section \ref{remark-on-q} can be 
taken as 
\[ E = q^{-1/4} \cdot \begin{pmatrix}
\cosh (\sqrt{q} w) & \sqrt{q} \sinh (\sqrt{q} w) \\ 
\sinh (\sqrt{q} w) & \sqrt{q} \cosh (\sqrt{q} w) 
\end{pmatrix}
\] and we can take $F$ as 
\[ F = q^{-1/4} \cdot \begin{pmatrix}
\cosh (\sqrt{q} w) & \sqrt{q} \sinh (\sqrt{q} w) - q w \cosh(\sqrt{q} w) \\ 
\sinh (\sqrt{q} w) & \sqrt{q} \cosh (\sqrt{q} w) - q w \sinh(\sqrt{q} w) 
\end{pmatrix} \; . \]
Then $f_0=E \bar E^T$ is a (geodesic) line when $|q|=1$, and is a 
(hyperbolic) cylinder when $|q| \neq 1$.  
Also, $f_1=F \bar F^T$ gives 
CMC $1$ Enneper cousins in $\mathbb{H}^3$ 
\cite{Br} \cite{RUY1} (see Figure \ref{hypcmc}).  These 
$f_1$ and $f_0$ deform to each other via the first Weingarten family.  

Now, consider half-lines in the domain (the complex plane $\mathbb{C}$) 
of the Enneper cousins $f_1$ emanating from the point $w=0$.  
In all but two directions for these rays, 
the corresponding curve on the surface will converge to a 
single point in 
the sphere at infinity $\partial \mathbb{H}^3$.  This limit point 
can be different for different directions, and in particular 
the limit will be one certain point (resp. one other certain point) 
for any ray that makes an angle of less than (resp. more than) 
$\pi/2$ with the positive real axis in the $w$-plane.  However, in 
the two special directions for these rays where $w$ is pure 
imaginary, the corresponding two curves on the 
surface converge to infinite wrappings of circles in 
$\partial \mathbb{H}^3$.  
These properties are easily checked, because we have 
an explicit form for $F$, as above.  Furthermore, the behavior is 
the same for any $f_t^{(1)}$ whenever $t > 0$, which is also easily 
checked.  This behavior is similar to the Stokes 
phenomenon we will see in Example \ref{thirdsmoothexample}.  

See Figure \ref{hypcmc} 
for the first Weingarten family of $f_0$, and Figure 
\ref{hypcmc2} for a second Weingarten family of $f_0$.  
\end{example}

\begin{example}\label{smoothsurfacesofrev} 
To make CMC $1$ surfaces of revolution $f_1$, the so-called 
catenoid cousins \cite{Br} \cite{UY1}, one can use 
$g=e^{\mu z}$ in \eqref{eqn:bryant}, for $\mu$ either real or purely 
imaginary.  For the corresponding flat 
surfaces $f_0$, one can obtain the surfaces called hourglasses 
(resp. snowmen) by 
choosing real (resp. imaginary) values for $\mu$ \cite{GMM-first} 
\cite{KUY}.  
Discrete versions of these flat surfaces can be seen in 
Figure \ref{uglyfigure8}, and graphics of 
the smooth surfaces look much the same, but are smooth.  
\end{example}

\begin{example}\label{thirdsmoothexample} 
We will now 
consider the holomorphic function $g = 
z^\gamma = z^{4/3}$, which gives the 
hyperbolic Schwarz map $f_0$ for the Airy equation $\tfrac{d^2}{dw^2} 
u - w u = 0$ as in Equation \eqref{eqn:q2}.  

The value $\gamma = \tfrac{4}{3}$ corresponds to the choice $q=w$, and 
so is of particular interest.  We can see this correspondence as follows: 
We take $q$ to be $w$.  This means we have $w = (g^\prime)^2$, 
and so $dw = 2 g^\prime g^{\prime\prime} dz$.  Then, because 
$dw=(1/(dg/dz)) dz$, we have 
$(1/(dg/dz)) dz = 2 g^\prime g^{\prime\prime} dz$, and so 
the original holomorphic function $g$, as a function of $z$, 
would satisfy 
\[ \frac{1}{2} = g^{\prime\prime} (g^\prime)^2 \] 
and so 
\[ g = \sqrt[3]{\tfrac{81}{128}} z^{4/3} \; , \]
and the scalar factor $\sqrt[3]{\tfrac{81}{128}}$ can be removed by 
replacing $z$ with an appropriate constant real multiple of $z$.  

This surface $f_0$ has an umbilic point at 
$w=0$ (so the corresponding caustic will 
blow out to infinity at $w=0$, see \cite{KRSUY}, \cite{KRUY1}, 
\cite{Roit}), and it has a 
"triangle" of singular points with three cuspidal edge arcs 
connected by three swallowtail 
singularities, and it has $120$ degree dihedral symmetry.  
Similar to the case of the CMC $1$ Enneper cousins, 
starting at the center point of the surface ($w=0$ in the domain of 
the mapping) and going in any direction but three (i.e. following a 
ray out from $w=0$ in the domain), the corresponding curve on the surface 
will converge to a single point (one of three possible points) 
in the sphere at infinity $\partial \mathbb{H}^3$ (as in Example 
\ref{examplewithqconstant}, 
this limit point can be different for different 
directions).  However, in three special directions for these rays, 
the corresponding curves on the surface converge to infinite 
wrappings of circles in $\partial \mathbb{H}^3$.  These special directions are 
exactly opposite to the directions of the swallowtail singularities.  
This is called {\em Stokes phenomenon}, and was explored carefully 
for this example in \cite{SY}, \cite{SYY}.  
Portions of this $f_0$ can be seen in Figure \ref{airy}.  

The surfaces $f_t^{(2)}$ ($0 \leq t < 1$) in a second Weingarten family 
of $f_0$ have the same property that the singular 
set is a triangle of three 
cuspidal edge arcs connected by three swallowtails.  Then, as $t$ tends 
to $1$, the three swallowtails converge to the origin $w=0$, and when 
$t=1$ the CMC $1$ surface $f_1$ has a branch point of order $2$ at 
$w=0$.  The deformation near the origin can be simulated by the 
map $Y_s : (u,v) \mapsto (y_1,y_2,y_3)$ given by 
\[ y_1 = - s (u^2+v^2) + 2 u v^2 - \tfrac{2}{3} u^3 \; , \;\;\; 
   y_2 = 2 s u + u^2 - v^2 \; , \;\;\; 
   y_3 = -s v + u v \; , \] 
as $s=1-t$ tends to zero.  

The first 
and second Weingarten families for $f_0$ can be seen in Figures 
\ref{airycmc} and \ref{airycmc2}.  

More generally, using 
$q=w^{n}$ in \eqref{eqn:bryant} will give the flat surfaces 
$f_0$ investigated in \cite{SY}.  
When $n$ is neither $-1$ nor $-2$, we have $g = c_n 
z^{\frac{2n+2}{n+2}}$ for some $c_n > 0$.  
When $q = w^{-1}$, then $g = 2 \log z$.  When $q=w^{-2}$, 
we have $g = c e^{-z}$, and $c$ can take on any value.  
\end{example}

\begin{remark}\label{smoothairyduality}
It is interesting to note that, in Example \ref{thirdsmoothexample}, 
while the two choices $q=w$ and $\hat q = \hat w^{-1/2}$ give different 
equations $\tfrac{d^2}{dw^2} u = w u$ and 
$\tfrac{d^2}{d\hat w^2} \hat u = \hat w^{-1/2} \hat u$, the solutions 
$\hat u$ to the second equation are essentially just 
$w$-derivatives of the solutions $u$ to the first equation.  A 
computation then shows $q$ and $\hat q$ can produce two flat surfaces 
that are parallel surfaces of each other.  In terms of the coordinate 
$z$, the corresponding statement is that $g = z^{4/3}$ and $\hat g = 
\hat z^{2/3}$ can produce two flat surfaces that are parallel to each 
other, and in fact this also follows from Lemma \ref{lem:3pt7} (taking 
$B$ to be off-diagonal with off-diagonal entries both $i$).  
\end{remark}

\begin{figure}[h]
\begin{center}
\begin{tabular}{ccc}
$t=0$ & $t=1/4$ & $t=1/2$  \\ 
\includegraphics[width=3cm]{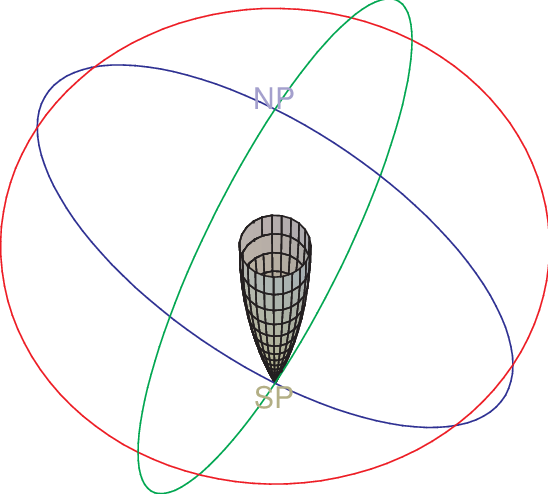} \qquad &
\qquad \includegraphics[width=3cm]{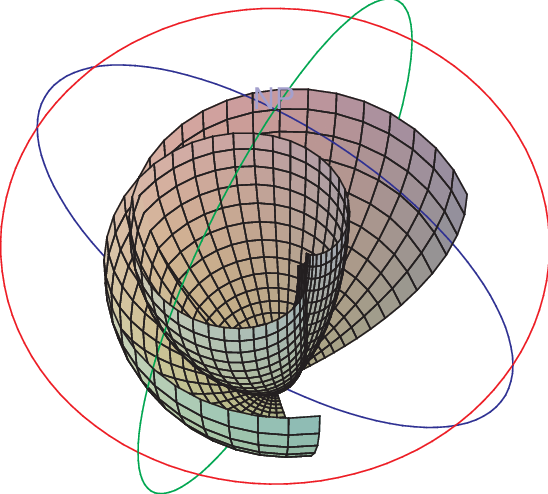} \qquad &
\qquad  \includegraphics[width=3cm]{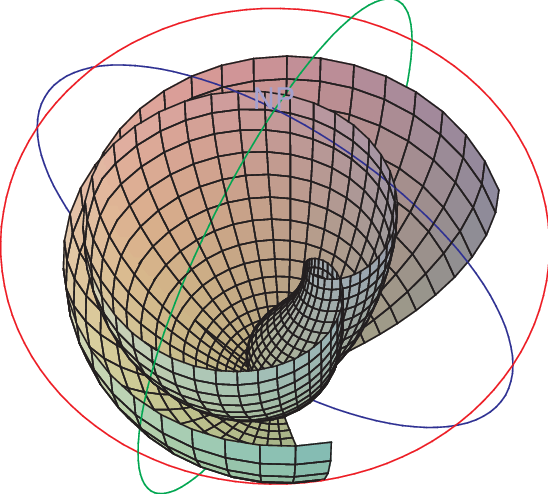} \\
$t=3/4$ & $t=1$ & \\
\includegraphics[width=3cm]{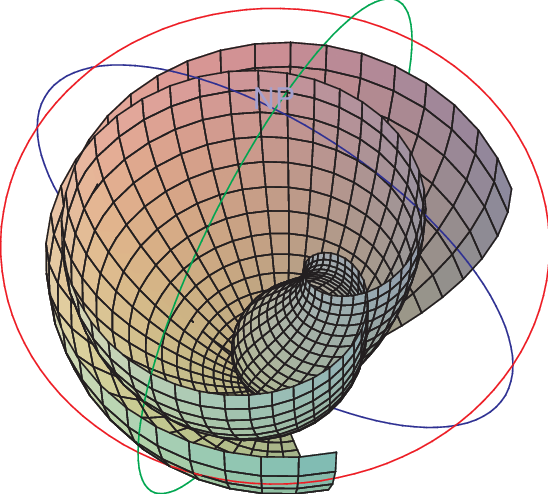}  \qquad & 
\qquad  \includegraphics[width=3cm]{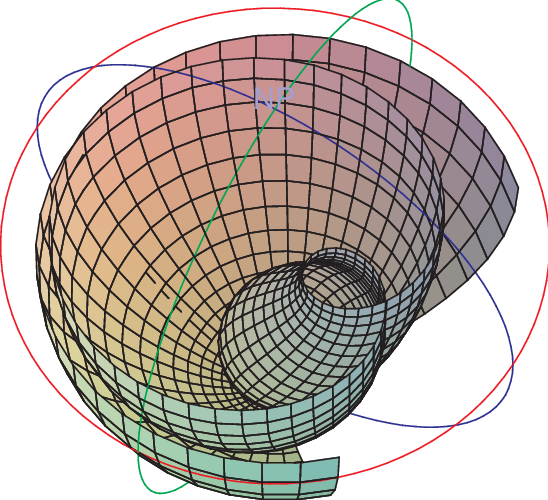}\qquad &
\end{tabular}
\end{center}
\caption{The first Weingarten family associated with a hyperbolic 
cylinder, in the Poincare ball model for $\mathbb{H}^3$.  
Here we take $q>0$ constant, so that $g^\prime = \sqrt{q}$ and 
$g = q w$.  Half of each of the surfaces is cut away.  Note that the 
surface is a CMC $1$ Enneper cousin when $t=1$.  See Example 
\ref{examplewithqconstant}.  (Here we have included grid lines 
on these smooth surfaces simply to make the graphics more visible.)}
\label{hypcmc}
\end{figure}

\begin{figure}[h]
\begin{center}
\begin{tabular}{ccc}
$t=0$ & $t=1/4$ & $t=1/2$  \\ 
\includegraphics[width=3cm]{cyll091m.eps} \qquad &
\qquad \includegraphics[width=3cm]{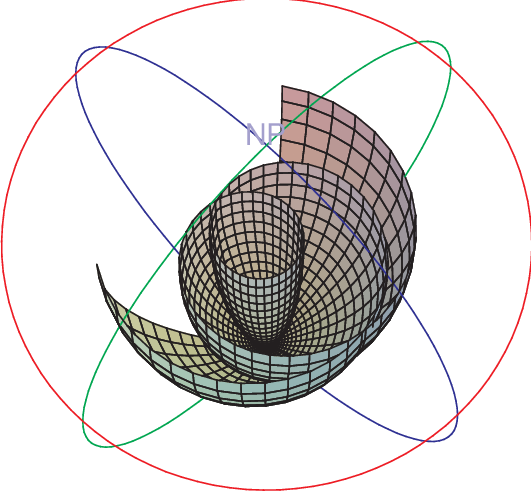} \qquad &
\qquad  \includegraphics[width=3cm]{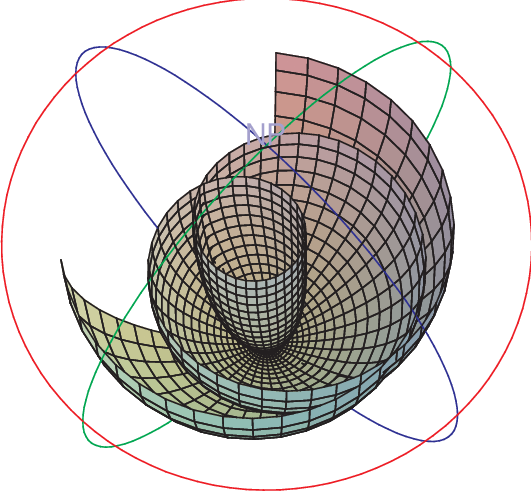} \\
$t=3/4$ & $t=1$ & \\
\includegraphics[width=3cm]{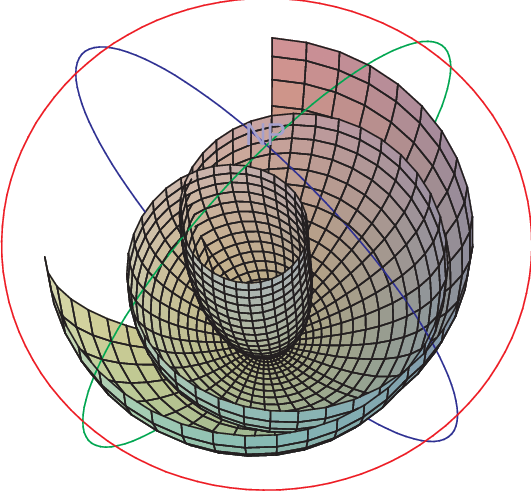}  \qquad & 
\qquad  \includegraphics[width=3cm]{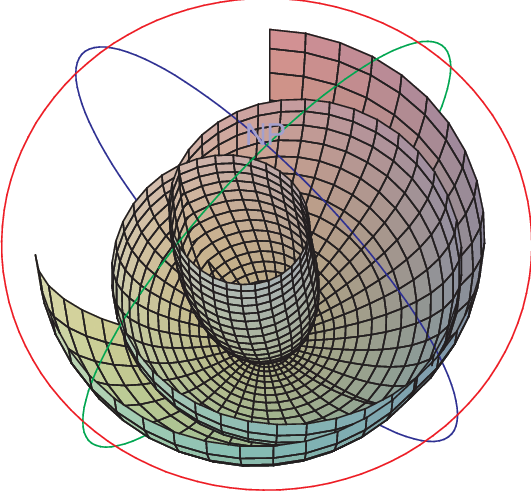}\qquad &
\end{tabular}
\end{center}
\caption{A second Weingarten family associated with a hyperbolic cylinder, 
in the Poincare ball model.  See Example \ref{examplewithqconstant}.} 
\label{hypcmc2}
\end{figure}

\begin{figure}[h]
\begin{center}
  \includegraphics[width=2cm]{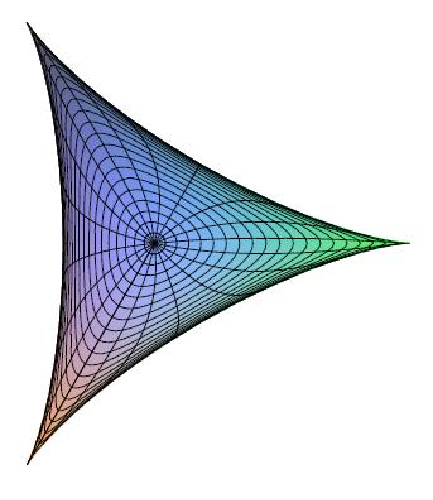}
\qquad
  \includegraphics[width=3.2cm]{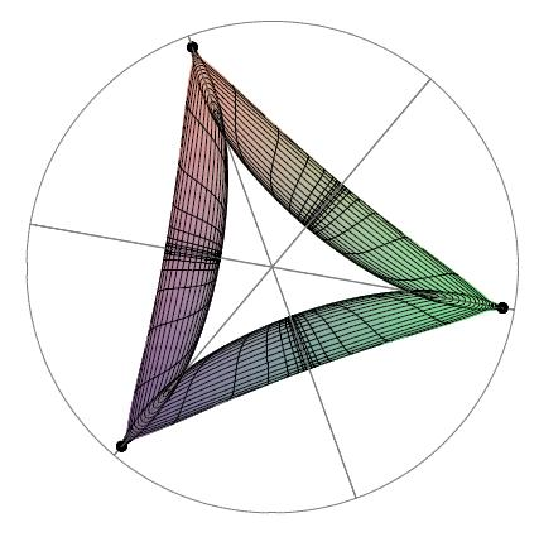}
\qquad
  \includegraphics[width=3.2cm]{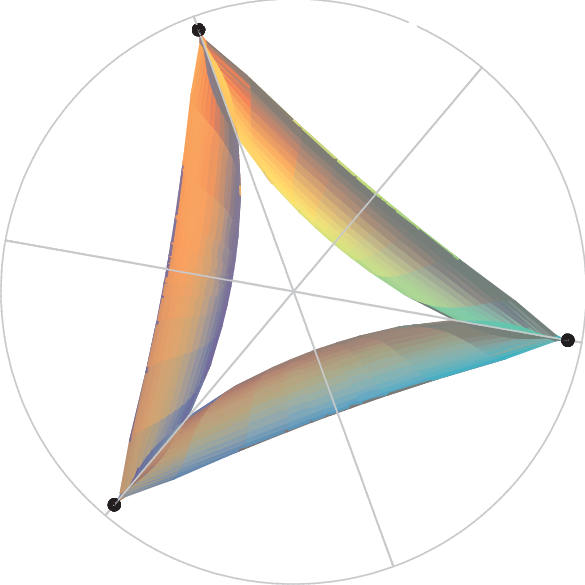}
\qquad
  \includegraphics[width=3.2cm]{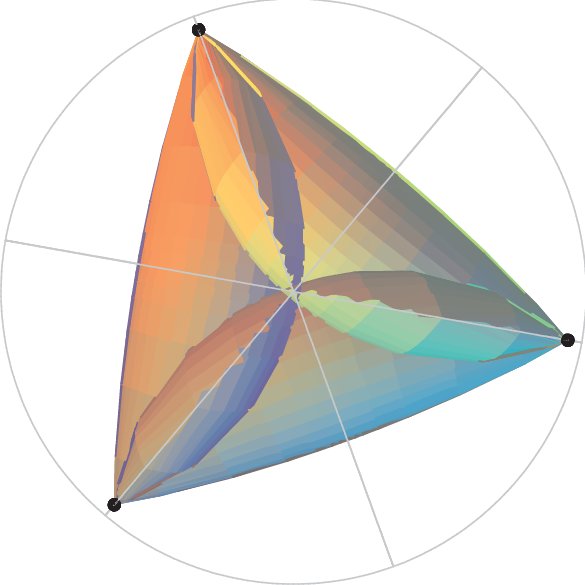}
\end{center}
\caption{Images of the smooth flat surface related to the 
Airy equation $\frac{d^2}{dw^2} u - w u = 0$, in the 
Poincare ball model.  The left figure is the 
image of the unit disc $\{ |w| \leq 1 \}$, shown with grid lines 
for better visibility.  The boundary of this left figure is the 
cuspidal edge curve with three swallowtail corners.  The middle  
two figures are the image of the ring 
$\{ 1.17 \leq w \leq 2.34 \}$, shown twice, once with grid lines 
and once without.  The right figure is the image of 
$\{ 3.27 \leq w \leq 4.09 \}$, this time shown without grid lines 
because in this case visibility is better without them.  See Example 
\ref{thirdsmoothexample}.} 
\label{airy}
\end{figure}

\begin{figure}[h]
\begin{center}
\begin{tabular}{cc}
$t=0$\quad (flat) & $t=1/2$  \\
  \includegraphics[width=4cm]{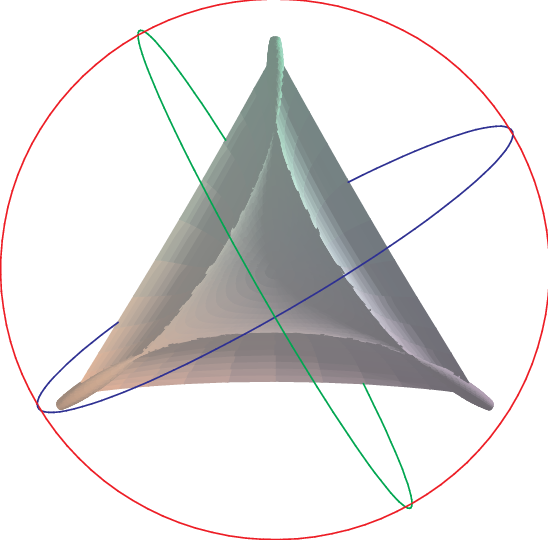} \qquad &
\qquad  \includegraphics[width=4cm]{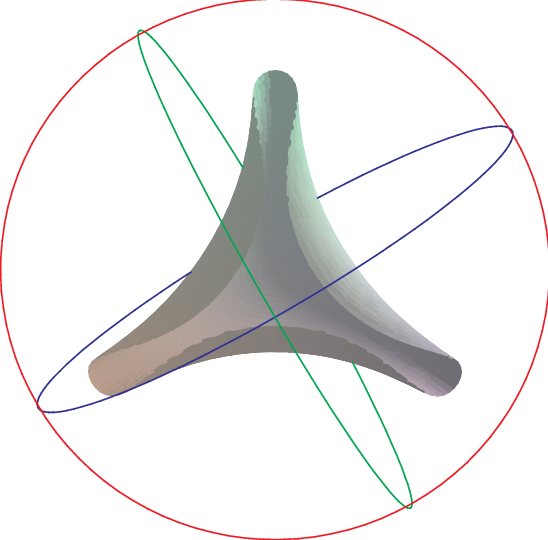} \\
$t=1$\quad (CMC 1) & $t=1$\quad (CMC 1) \\
  \includegraphics[width=4cm]{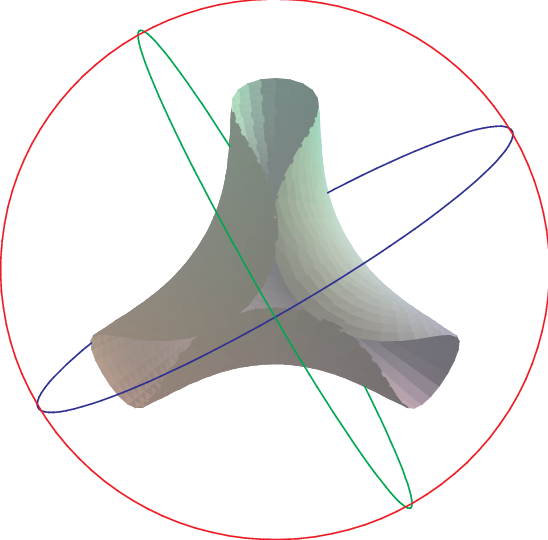} \qquad &
\qquad   \includegraphics[width=4cm]{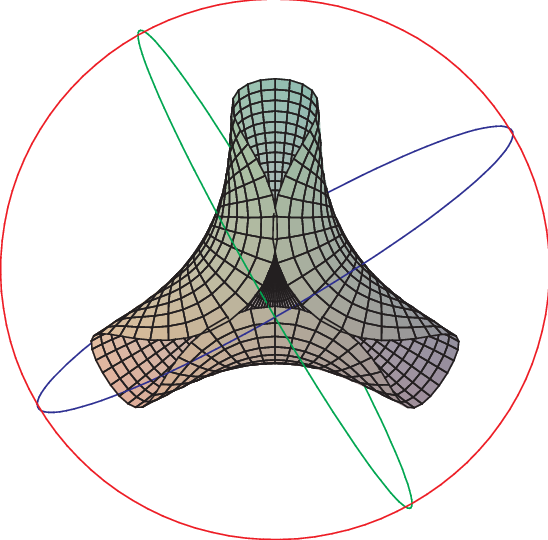} 
\end{tabular}
\end{center}
\caption{The first Weingarten family associated with the Airy equation, 
in the Poincare ball model.  See Example 
\ref{thirdsmoothexample}.  The bottom two figures are the same 
CMC $1$ surface, shown once with grid lines and once without.} 
\label{airycmc}
\end{figure}

\begin{figure}[h]
\begin{center}
\begin{tabular}{ccc}
$t=0$\quad (flat) & $t=1/10$  & \\
  \includegraphics[width=4cm]{airy091n.eps} \qquad &
\qquad  \includegraphics[width=4cm]{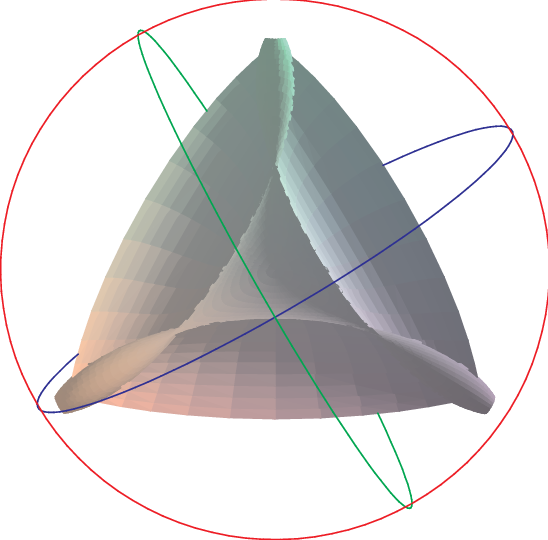} & \\
$t=1/2$ & $t=1$\quad (CMC 1) & $t=1$\quad (CMC 1) \\
  \includegraphics[width=4cm]{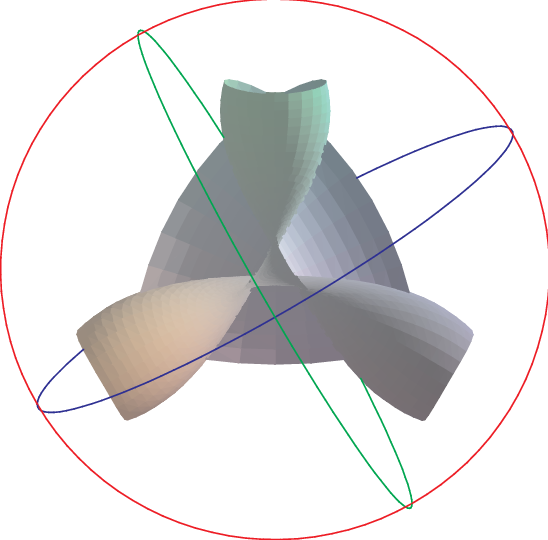} \qquad &
\qquad   \includegraphics[width=4cm]{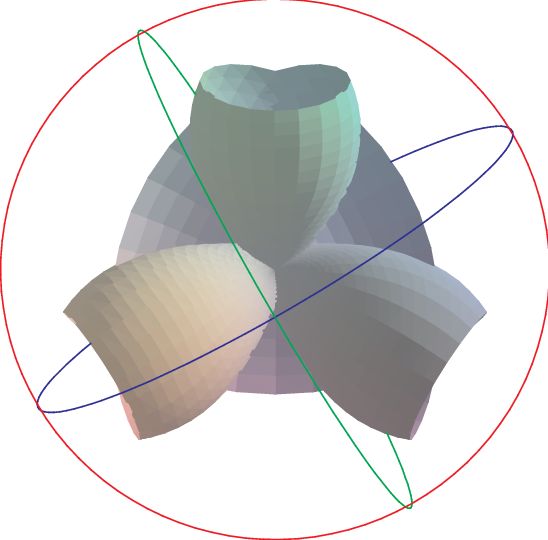} \qquad &
\qquad   \includegraphics[width=4cm]{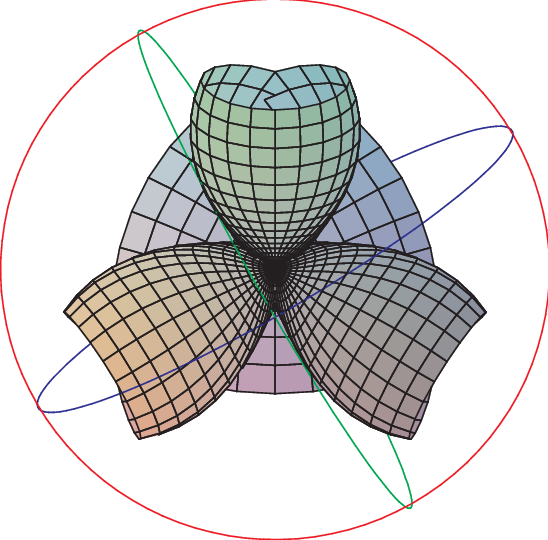} 
\end{tabular}
\end{center}
\caption{A second Weingarten family associated with the Airy equation, 
in the Poincare ball model.  See Example 
\ref{thirdsmoothexample}.  The two lower-right figures are the same 
CMC $1$ surface, shown once with grid lines and once without.  
(The upper-left surface here is the same as the upper-left surface 
in Figure \ref{airycmc}.)}  
\label{airycmc2}
\end{figure}

\section{Discrete CMC $1$, flat and linear Weingarten surfaces 
in $\mathbb{H}^3$}
\label{section3}

\subsection{Known definition for discrete CMC $1$ surfaces}
We now describe how discrete CMC $1$ surfaces 
in $\mathbb{H}^3$ were defined in \cite{Udo1}.  For this, we first give 
a light cone model for $\mathbb{H}^3$ in Minkowski $5$-space $\mathbb{R}^{4,1}$ 
that is commonly used in Moebius geometry, and 
that was used in \cite{Udo1} in conjunction with 
quaternions.  Let $\mathcal{H}$ denote the $4$-dimensional vector 
space of quaternions with the usual basis $1$, $i$, $j$ and $k$, and 
with the usual notion of quaternionic multiplication.  Points 
$(x_1,x_2,x_3,x_4,x_0) \in \mathbb{R}^{4,1}$ can then be given by 
\begin{equation}\label{mink5sp} 
X = x_1 \begin{pmatrix} i & 0 \\ 0 & -i \end{pmatrix} + 
x_2 \begin{pmatrix} j & 0 \\ 0 & -j \end{pmatrix} + 
x_3 \begin{pmatrix} k & 0 \\ 0 & -k \end{pmatrix} + 
x_4 \begin{pmatrix} 0 & 1 \\ -1 & 0 \end{pmatrix} + 
x_0 \begin{pmatrix} 0 & 1 \\ 1 & 0 \end{pmatrix} \; , 
\end{equation}
and the $\mathbb{R}^{4,1}$ metric $\langle X,X \rangle$ is then given by 
($I$ is the identity matrix) 
\[ \langle X,X \rangle \cdot I = -X^2 \; . \]  
Now let us view the collection of such trace-free 
$2 \times 2$ matrices with imaginary quaternions on the diagonal 
and reals on the off-diagonal as the set of points in 
$\mathbb{R}^{4,1}$.  
We can then define $\mathbb{H}^3$ as a $3$-dimensional submanifold of the 
light cone in this way:  
\begin{equation}\label{H3quatmodel} 
\mathbb{H}^3 = \left\{ X \in \mathbb{R}^{4,1} \, \left| \, X^2 = 0 , 
      X \cdot \begin{pmatrix} -i & 0 \\ 0 & i \end{pmatrix} + 
      \begin{pmatrix} -i & 0 \\ 0 & i \end{pmatrix} \cdot X = 
      2 I \right. \right\} \; . \end{equation} 
This $\mathbb{H}^3$ will have constant sectional curvature $-1$ when 
given the induced metric from $\mathbb{R}^{4,1}$.  
Furthermore, for each point $X$ in the light cone, there is at most 
one value for $r \in \mathbb{R}$ so that $rX$ lies in $\mathbb{H}^3$, 
so we can 
alternately view $\mathbb{H}^3$ as the projectivized light cone.  

In \cite{Udo1}, in order to define discrete CMC $1$ surfaces 
in $\mathbb{H}^3$, the discrete version of the Bryant equation 
\begin{equation}\label{discretebryeqn} 
F_q-F_p = F_p \begin{pmatrix} g_p & -g_pg_q \\ 1 & -g_q \end{pmatrix} 
\frac{\lambda \alpha_{pq}}{g_q-g_p} \; , \;\;\; 
\det F \in \mathbb{R} \; , \end{equation} was used, where 
$g$ is a discrete holomorphic function with 
cross ratio factorizing function $\alpha_{pq}$, and $p$ and $q$ are adjacent 
vertices in the domain $D \subset \mathbb{Z}^2$ of $g$.  
Note that we have assumed $g_q-g_p \neq 0$ (see Section 
\ref{sect:2pt3}).  The nonzero real parameter 
$\lambda$ can be chosen freely.  The formula for the discrete CMC 
$1$ surface $f$ in $\mathbb{H}^3$ was then obtained, analogous to the 
formula \eqref{starstar} for the case of 
discrete minimal surfaces, by setting (here $\mathbb{L}^4$ is the $4$-dimensional 
light cone in $\mathbb{R}^{4,1}$) 
\begin{equation}\label{eqn:form-we-want} f_{1,p} = r \cdot 
\begin{pmatrix} -b \bar a & a \bar a \\ b \bar b & - a \bar b \end{pmatrix} 
\in \mathbb{H}^3 \subset \mathbb{L}^4 
\subset \mathbb{R}^{4,1} \; , \;\;\; \text{where} \;\; 
\begin{pmatrix} a \\ b \end{pmatrix} = \begin{pmatrix} 0 & 1 \\ 
j & 0 \end{pmatrix} 
F_p \begin{pmatrix} i \\ j \end{pmatrix} \; , 
\end{equation} where $r$ is the appropriate choice of real scalar to 
place $f_{1,p}$ in $\mathbb{H}^3$ as defined in \eqref{H3quatmodel}.  In fact, 
\[ r = 2 (b \bar a i + i b \bar a)^{-1} = -2 (a \bar b i + i 
a \bar b)^{-1} \in \mathbb{R} \; . \]  Because the entries of $F$ are complex, 
not quaternionic, it follows that 
$b \bar a$ is purely imaginary quaternionic, so the above 
matrix $f_{1,p}$ does lie in $\mathbb{R}^{4,1}$, and thus in 
$\mathbb{L}^4$ and then 
also in $\mathbb{H}^3$ with appropriate choice of $r$.  Also, we 
actually have a $1$-parameter family of surfaces, due to the freedom of 
choice of $\lambda$.  

The solution $F$ is defined only up to scalar factors.  
This is essentially because Equation \eqref{discretebryeqn} is not 
symmetric in $p$ and $q$, and can be explained as follows: 
Consider a quadrilateral in $D$ with vertices $p$, $q$, $r$ and $s$ 
given counterclockwise about the quadrilateral, and with $p$ as the 
lower left vertex.  Then $F_q = F_p \cdot \mathfrak{A}$ with 
$\mathfrak{A}$ determined by Equation \eqref{discretebryeqn}.  
Then $F_r = F_q \cdot \mathfrak{B}$, again by \eqref{discretebryeqn}.  
Similarly, $F_s = F_p \cdot \mathfrak{C}$ and $F_r = F_s \cdot 
\mathfrak{D}$.  Thus we expect $\mathfrak{A} 
\mathfrak{B} = \mathfrak{C} \mathfrak{D}$, and a computation shows that 
this is indeed the case.  However, we also have $F_p$ equal to 
$F_q \cdot \hat{\mathfrak{A}}$, by \eqref{discretebryeqn} with the roles 
of $p$ and $q$ reversed, and it turns out that $\mathfrak{A} 
\hat{\mathfrak{A}} = (1 - 
\lambda \alpha_{pq}) I \neq I$.  Thus, the solution 
$F$ to \eqref{discretebryeqn} is only defined projectively, that is, 
it is only defined up to scalar factors.  

Nevertheless, because we 
are scaling by a real factor $r$ in Equation \eqref{eqn:form-we-want} 
anyways, the resulting discrete CMC $1$ surface is still well-defined.  
Thus we have seen the following lemma.  

\begin{lemma}\label{welldefinedCMC1}
Although the solution $F$ of Equation \eqref{discretebryeqn} 
is multi-valued and only 
defined up to scalar factors, the discrete CMC $1$ surface $f_{1}$ given 
in \eqref{eqn:form-we-want} is well-defined.  
\end{lemma}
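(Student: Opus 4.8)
The plan is to make the phrase ``$F$ is defined only up to scalar factors'' precise, to note that the map \eqref{eqn:form-we-want} producing $f_{1,p}$ uses only the value of $F$ at the single vertex $p$, and then to check by a short homogeneity count that this map is insensitive to the rescaling in question.

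First I would nail down the indeterminacy of $F$. Writing $F_q = F_p\,\mathfrak{A}_{pq}$ with $\mathfrak{A}_{pq} = I + \begin{pmatrix} g_p & -g_pg_q \\ 1 & -g_q\end{pmatrix}\frac{\lambda\alpha_{pq}}{g_q-g_p}$, the discussion preceding the lemma records two facts, each a direct $2\times 2$ computation using the cross-ratio relation \eqref{rats2}: the compatibility $\mathfrak{A}_{pq}\mathfrak{A}_{qr} = \mathfrak{A}_{ps}\mathfrak{A}_{sr}$ around each elementary quadrilateral of $D$, and the reversal identity $\mathfrak{A}_{pq}\mathfrak{A}_{qp} = (1-\lambda\alpha_{pq})\,I$. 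Since $\lambda$ and every $\alpha_{pq}$ are real, these two identities together force any two admissible solutions $F$ to differ, at each vertex, by multiplication by a single nonzero \emph{real} constant (the relevant ``monodromies'' being products of the real numbers $1-\lambda\alpha_{pq}$). Because $f_{1,p}$ in \eqref{eqn:form-we-want} depends on $F$ only through $F_p$, it therefore suffices to prove that replacing $F_p$ by $\rho\,F_p$ for an arbitrary $\rho\in\mathbb{R}\setminus\{0\}$ leaves $f_{1,p}$ unchanged.

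Next I would track $\rho$ through the formula. The assignment $\begin{pmatrix} a\\ b\end{pmatrix} = \begin{pmatrix}0&1\\ j&0\end{pmatrix}F_p\begin{pmatrix} i\\ j\end{pmatrix}$ is $\mathbb{R}$-linear in $F_p$, so $a\mapsto\rho a$ and $b\mapsto\rho b$; and since $\rho$ is real it is central among the quaternions and fixed by quaternionic conjugation, so each entry of $M_p:=\begin{pmatrix} -b\bar a & a\bar a \\ b\bar b & -a\bar b\end{pmatrix}$ picks up the factor $\rho^2$, that is, $M_p\mapsto\rho^2 M_p$, while $r = 2(b\bar a\, i + i\, b\bar a)^{-1}\mapsto\rho^{-2}r$. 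Hence $f_{1,p} = r\,M_p$ is unchanged. The last step can also be seen intrinsically: $M_p$ lies on the light cone $\mathbb{L}^4$, which is scale-invariant, while among the real multiples of $M_p$ there is exactly one satisfying the inhomogeneous normalization that cuts $\mathbb{H}^3$ out of $\mathbb{L}^4$ in \eqref{H3quatmodel}; rescaling $M_p$ by $\rho^2$ rescales this normalizing factor by $\rho^{-2}$, so the resulting point of $\mathbb{H}^3$ does not move. (This remark also justifies the existence and uniqueness of $r$ asserted alongside \eqref{eqn:form-we-want}.)

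The only step with real content is the first one: one must be sure that the freedom in $F$ is \emph{exactly} vertex-wise multiplication by a nonzero real scalar, and in particular that no residual non-scalar freedom survives once one fixes the surface up to isometry. This is precisely the payoff of the two identities for the $\mathfrak{A}_{pq}$: the forward equation \eqref{discretebryeqn} already forces any solution-to-solution discrepancy to be scalar along each edge, and the reversal identity pins that scalar down to the real number $1-\lambda\alpha_{pq}$. Granting those identities, the remainder of the proof is the homogeneity bookkeeping — degree $+2$ for $M_p$, degree $-2$ for $r$ — and presents no obstacle.
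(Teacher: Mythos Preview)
Your proposal is correct and follows essentially the same line as the paper. The paper's ``proof'' is really the paragraph preceding the lemma: it identifies the scalar ambiguity as the real factor $1-\lambda\alpha_{pq}$ from the reversal identity, and then observes that the normalizing real factor $r$ in \eqref{eqn:form-we-want} absorbs it. You make the last step explicit via the homogeneity count $M_p\mapsto\rho^2 M_p$, $r\mapsto\rho^{-2}r$, which is a welcome clarification but not a different argument.
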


In fact, the upcoming Theorem \ref{nice-form-for-f} 
also implies that the discrete CMC $1$ surface $f_{1}$ is well-defined.

\subsection{New formulation for discrete CMC $1$ surfaces}
Equivalently to the definition given in \cite{Udo1}, there is 
another way to define $f_p$, which is given in the next theorem.  This 
new form for $f_p$ is convenient, because 
it is clearly analogous to the form used in the Bryant 
representation for smooth CMC $1$ surfaces in $\mathbb{H}^3$, and it removes the 
need for the Moebius-geometric $\mathbb{R}^{4,1}$ light cone model for 
$\mathbb{H}^3$.  

\begin{theorem}\label{nice-form-for-f}
The above description \eqref{eqn:form-we-want} 
for the discrete CMC $1$ surface given by 
$F$ is equal to the surface given by $\frac{1}{\det F} F \bar F^T$, 
up to a rigid motion of $\mathbb{H}^3$.  
\end{theorem}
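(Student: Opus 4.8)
The plan is to prove the equality by a direct computation in coordinates, and then to recognize the outcome as an isometry between the two models of $\mathbb{H}^{3}$ that are in play: the Moebius-geometric light-cone model \eqref{H3quatmodel} on the side of \eqref{eqn:form-we-want}, and the hyperboloid model $\{\,G\bar G^{T}\mid G\in SL_{2}(\mathbb{C})\,\}$ on the side of $\tfrac{1}{\det F}F\bar F^{T}$. Since $F$ is determined only up to a real scalar, and acquires only real scalar factors as one traverses a quadrilateral (cf.\ the discussion of \eqref{discretebryeqn}), the matrix $\tfrac{1}{\det F}F\bar F^{T}$ is unambiguously defined at each vertex, so it is enough to match the two constructions vertex by vertex.

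First I would write $F_{p}=\bigl(\begin{smallmatrix}F_{11}&F_{12}\\F_{21}&F_{22}\end{smallmatrix}\bigr)$ with complex entries, identify $\mathbb{C}$ with $\operatorname{span}_{\mathbb{R}}\{1,i\}\subset\mathcal{H}$, and evaluate the column $\bigl(\begin{smallmatrix}a\\b\end{smallmatrix}\bigr)=\bigl(\begin{smallmatrix}0&1\\j&0\end{smallmatrix}\bigr)F_{p}\bigl(\begin{smallmatrix}i\\j\end{smallmatrix}\bigr)$ using the rules $jc=\bar c\,j$ and $kc=\bar c\,k$ for $c\in\mathbb{C}$; this gives $a=F_{21}i+F_{22}j$ and $b=-\bar F_{12}-\bar F_{11}k$. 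Then I would compute $a\bar a=|F_{21}|^{2}+|F_{22}|^{2}=(F_{p}\bar F_{p}^{T})_{22}$, $b\bar b=|F_{11}|^{2}+|F_{12}|^{2}=(F_{p}\bar F_{p}^{T})_{11}$, and $b\bar a=-(\det F_{p})\,i+(F_{p}\bar F_{p}^{T})_{21}\,j$. The hypothesis $\det F_{p}\in\mathbb{R}$ enters essentially here: it is exactly what makes the real part of $b\bar a$ vanish, so that $b\bar a$ is a purely imaginary quaternion and $f_{1,p}$ from \eqref{eqn:form-we-want} really does lie in $\mathbb{R}^{4,1}$; it also yields $r=2(b\bar a\,i+i\,b\bar a)^{-1}=1/\det F_{p}$.

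Next I would read off the $\mathbb{R}^{4,1}$-coordinates $(x_{1},x_{2},x_{3},x_{4},x_{0})$ of $f_{1,p}$ by matching $r\bigl(\begin{smallmatrix}-b\bar a&a\bar a\\ b\bar b&-a\bar b\end{smallmatrix}\bigr)$ against \eqref{mink5sp}. One finds $x_{1}\equiv 1$ — which is precisely the normalization condition in \eqref{H3quatmodel}, so that $\mathbb{H}^{3}$ in this model sits inside the affine slice $\{x_{1}=1\}$, a copy of $\mathbb{R}^{3,1}$ — while $x_{2},x_{3}$ equal $-1/\det F_{p}$ times the real and imaginary parts of $(F_{p}\bar F_{p}^{T})_{21}$, and $x_{0},x_{4}$ equal $\tfrac{1}{2\det F_{p}}$ times $\operatorname{tr}(F_{p}\bar F_{p}^{T})$ and $(F_{p}\bar F_{p}^{T})_{22}-(F_{p}\bar F_{p}^{T})_{11}$ respectively. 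On the other hand, writing $\tfrac{1}{\det F_{p}}F_{p}\bar F_{p}^{T}=\bigl(\begin{smallmatrix}y_{0}+y_{3}&y_{1}+iy_{2}\\ y_{1}-iy_{2}&y_{0}-y_{3}\end{smallmatrix}\bigr)$ expresses $y_{0},\dots,y_{3}$ in terms of the same entries, and comparison shows that $f_{1,p}$ has coordinates $(x_{1},x_{2},x_{3},x_{4},x_{0})=(1,-y_{1},y_{2},-y_{3},y_{0})$.

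Finally I would observe that $(y_{0},y_{1},y_{2},y_{3})\mapsto(1,-y_{1},y_{2},-y_{3},y_{0})$ is an isometric identification of the hyperboloid model with the slice $\{x_{1}=1\}$ of \eqref{H3quatmodel}: the image lies on the light cone and satisfies the normalization condition, $dx_{1}=0$ makes the induced metrics agree, and on the spatial coordinates the map is a rotation. As this identification does not depend on $p$ or on $F$, it is the desired rigid motion of $\mathbb{H}^{3}$, and the theorem follows. I expect the main obstacle to be purely computational: carrying the quaternion sign conventions correctly through the formula for $b\bar a$, checking that its real part vanishes, and verifying the Lagrange-type identity $|a|^{2}|b|^{2}=(\det F_{p})^{2}+|(F_{p}\bar F_{p}^{T})_{12}|^{2}$ that guarantees $f_{1,p}$ lands on the light cone — there is no conceptual difficulty beyond this bookkeeping.
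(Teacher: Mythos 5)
Your proposal is correct and follows essentially the same route as the paper: both compute the quaternionic products $a\bar a$, $b\bar b$, $b\bar a$ explicitly in terms of the entries of $F$, take $r=1/\det F$, and observe that the two descriptions differ only by the sign change $(y_1,y_2,y_3)\mapsto(-y_1,y_2,-y_3)$, which is a rigid motion of $\mathbb{H}^3$. The only cosmetic difference is that you compare Minkowski coordinates directly on the slice $\{x_1=1\}$ of the light-cone model, whereas the paper compares the two projections into the Poincar\'e ball.
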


\begin{proof}
The matrices $f_p$ in $\mathbb{H}^3$, as described in \eqref{eqn:form-we-want}, 
will be of the form 
\[ r \begin{pmatrix} -(\bar A C+\bar B D)j+i(AD-BC) & C\bar C+D \bar D \\
A \bar A + B \bar B & j (A \bar C + B \bar D)-i(AD-BC)
\end{pmatrix} \; , 
\] where $r$ is a nonzero real scalar and 
\[ F = \begin{pmatrix} A & B \\ C & D 
\end{pmatrix} \; . 
\] To have $f_{1,p} \in \mathbb{H}^3$, we should take 
\[ r = \frac{1}{AD-BC} \; . \]  This means that the coefficient of the 
$i$ term in the diagonal entries will be simply $\pm 1$.  So we can view 
the surface as lying in the $4$-dimensional space $\mathbb{R}^{3,1}$, by simply 
removing the matrix term with scalar $x_1$ from Equation \eqref{mink5sp}.  

Now, the projection into the Poincare ball model is 
\[ (x_2,x_3,x_4,x_0) \to \frac{(x_2,x_3,x_4)}{1+x_0} = \]
\begin{equation}\label{firstPoincareprojection} 
\frac{(\text{Re}(-\bar A C-\bar B D),\text{Im}(-\bar A C-\bar B D),
\tfrac{1}{2}(-A\bar A-B\bar B+C\bar C+D\bar D))}{AD-BC + 
\tfrac{1}{2} (A\bar A+B\bar B+C\bar C+D\bar D)} \; . \end{equation} 

On the other hand, if we look at 
\[ \frac{1}{AD-BC} F \bar F^T = \frac{1}{AD-BC} 
\begin{pmatrix}
A \bar A+B \bar B & A \bar C+B \bar D \\ C \bar A+D \bar B & C \bar C+D \bar D
\end{pmatrix} = 
\begin{pmatrix}
y_0+y_3 & y_1+i y_2 \\ y_1-i y_2 & y_0-y_3
\end{pmatrix} \; ,\] and then project to the Poincare ball, we 
have 
\[ \frac{(y_1,y_2,y_3)}{1+y_0} = \]
\begin{equation}\label{secondPoincareprojection} 
\frac{(\text{Re}(A \bar C+B \bar D),\text{Im}(A \bar C+B \bar D),
\tfrac{1}{2}(A\bar A+B\bar B-C\bar C-D\bar D))}{AD-BC + 
\tfrac{1}{2} (A\bar A+B\bar B+C\bar C+D\bar D)} \; . \end{equation} 
The quantities \eqref{firstPoincareprojection} and 
\eqref{secondPoincareprojection} are the same, up to a rigid motion of 
$\mathbb{H}^3$, proving the theorem.  
\end{proof}

\subsection{Discrete flat surfaces}\label{section4pt3}
To make discrete flat surfaces in $\mathbb{H}^3$, we can now take 
\begin{equation}\label{EpEpEp} 
E_p = F_p \cdot \begin{pmatrix} 1 & g_p \\ 0 & 1 \end{pmatrix} \; , 
\;\;\; \det E = \det F \in \mathbb{R} \; , 
\end{equation}  
like in \eqref{EinrelationtoF} for the smooth case.  
We then use the same formula \eqref{eqn:form-we-want} as for discrete 
CMC $1$ surfaces to define the discrete flat surface, but with $F_p$ 
replaced by $E_p$.  In light of Theorem \ref{nice-form-for-f}, 
the following definition is natural: 

\begin{definition}
For $E$ given as in \eqref{EpEpEp}, where $F$ is a solution of 
Equation \eqref{discretebryeqn}, 
\begin{equation}\label{starstar1} 
f_{0} = \frac{1}{\det E} E \bar E^T \end{equation} is a {\em discrete flat} 
surface.  
\end{definition}

Furthermore, in light of the behavior of the normal for smooth 
flat surfaces in Equation \eqref{flatsurfnormal}, it is natural 
to define the normal at vertices of a discrete flat surface by 
\begin{equation}\label{flatsurfnormal2} 
N_p := \frac{1}{\det E_p} E_p \cdot \begin{pmatrix} 
1 & 0 \\ 0 & -1 \end{pmatrix} \cdot \bar E_p^T \; . \end{equation}
A discrete version of Equation \eqref{E-eqn-for-flat-guys} can then 
be computed, and becomes 
\begin{equation}\label{star15} 
E_q-E_p = E_p \begin{pmatrix} 0 & g_q-g_p \\ \frac{\lambda 
\alpha_{pq}}{g_q-g_p} & 0 \end{pmatrix} \; , \end{equation} 
where $\lambda$ is an arbitrary parameter in $\mathbb{R} \setminus 
\{ 0 \}$.  

\begin{remark}
Like in Lemma \ref{welldefinedCMC1} for $F$, this $E$ is 
only defined up to scalar factors.  However, the discrete flat 
surface $f_0$ is still well-defined.  
\end{remark}

\begin{remark}\label{rem-parallelsurfaces} 
Changing $g_p$ to $d \cdot g_p$ for $d \in \mathbb{R}^+$ gives parallel 
discrete flat surfaces, and this is seen as follows: 
$g_p \to d \cdot g_p$ implies 
\begin{equation}\label{starstar2} 
E_p \to E_p^d := E_p \cdot \begin{pmatrix} 1/\sqrt{d} & 0 \\ 0 & \sqrt{d} 
\end{pmatrix} \; , \end{equation}  
which implies the original surface $f_{0,p} = \tfrac{1}{\det E_p} 
E_p \bar E_p^T$ changes to 
\begin{equation}\label{starstar3} 
f_{0,p}^d = \cosh (\log d) \, \cdot f_{0,p} - \sinh (\log d) 
\, \cdot N_p \; . \end{equation}  This surface $f_{0}^d$ is a parallel 
surface of $f_{0}$, and $f_{0}^1$ is the same as the original surface 
$f_{0}$.  It also follows from \eqref{starstar2} and \eqref{starstar3} 
that the geodesic at $f_{0,p}$ in the direction $N_p$ and the geodesic at 
$f_{0,p}^d$ in the direction $N_p^d = \tfrac{1}{\det E_p^d} E_p^d 
\begin{pmatrix} 1 & 0 \\ 0 & -1 \end{pmatrix} \overline{E_p^d}^T$ 
are the same.  
\end{remark}

We now give one of our main results: 

\begin{theorem}\label{thm:concirc-quads}
Discrete flat surfaces in $\mathbb{H}^3$ have concircular quadrilaterals.  
\end{theorem}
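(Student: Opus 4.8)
The plan is to reduce the statement to a cross-ratio computation. A quadrilateral in $\mathbb{H}^3$ has concircular vertices precisely when the cross ratio of its four vertices (computed in a suitable model) is real, so the goal is to show that the four points $f_{0,p}, f_{0,q}, f_{0,r}, f_{0,s}$ coming from the four corners of a lattice square have real cross ratio. Since $f_0 = \tfrac{1}{\det E} E\bar E^T$ with $E$ governed by the linear difference equation \eqref{star15}, the first step is to record, for a single lattice square with vertices $p,q,r,s$ taken counterclockwise, the transition matrices: $E_q = E_p \mathfrak{A}_{pq}$, $E_r = E_q \mathfrak{A}_{qr}$, $E_s = E_p \mathfrak{A}_{ps}$, $E_r = E_s \mathfrak{A}_{sr}$, where each $\mathfrak{A}_{**} = I + \begin{pmatrix} 0 & g_*-g_\bullet \\ \lambda\alpha_{\bullet *}/(g_*-g_\bullet) & 0 \end{pmatrix}$, and to verify the closing condition $\mathfrak{A}_{pq}\mathfrak{A}_{qr} = \mathfrak{A}_{ps}\mathfrak{A}_{sr}$ using the cross-ratio factorization \eqref{rats2} and the edge-labelling conditions on $\alpha$. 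This is the same bookkeeping already carried out for $F$ in the discussion preceding Lemma \ref{welldefinedCMC1}, so it should go through routinely.

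Next I would compute the cross ratio of the four surface points directly. Because each $f_{0,\ast}$ is $\tfrac{1}{\det E}E_\ast \bar E_\ast^T$ and the $E_\ast$ differ from $E_p$ by left multiplication, it is convenient to factor out $E_p$ on the left and $\bar E_p^T$ on the right: writing $E_q = E_p A$, $E_r = E_p B$, $E_s = E_p C$ (with $A = \mathfrak{A}_{pq}$, $B = \mathfrak{A}_{pq}\mathfrak{A}_{qr}$, $C = \mathfrak{A}_{ps}$ all $SL_2(\mathbb{C})$ matrices depending only on the $g$-values and $\alpha$-values around that square), the four points become $E_p\, (I, A\bar A^T, B\bar B^T, C\bar C^T)\, \bar E_p^T$, up to the positive real scalars $\tfrac{1}{\det E_\ast}$. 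A cross ratio of four points in $\mathbb{H}^3$ of the form $M X_i \bar M^T$ is an isometry-invariant of the $X_i$; and since $\tfrac{1}{\det E}$ is a real positive scalar, the scalings do not affect reality of the cross ratio. So the problem collapses to showing that the four Hermitian matrices $I, A\bar A^T, B\bar B^T, C\bar C^T$ — where $A, B, C$ are explicit products of the two-by-two matrices above — have real cross ratio. At this point one either invokes the known fact (cf. Hertrich-Jeromin, and the discrete isothermic theory) that the vertices of a quad with such a transition structure are automatically concircular, or one grinds the $2\times 2$ algebra: identifying $\mathbb{H}^3$ with the set $\{F\bar F^T\}$, the natural cross ratio of four such points reduces to the quaternionic/complex cross ratio of the associated isotropic vectors, and the factorization \eqref{rats2} of $\mathrm{cr}_{m,n}$ as a ratio of the real $\alpha$'s is exactly what forces reality.

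The main obstacle I anticipate is choosing the right notion of "cross ratio of four points in $\mathbb{H}^3$" and making the invariance argument clean. One has to be careful: the raw matrices $E_\ast \bar E_\ast^T$ are scaled by $1/\det E_\ast$, and one must check these scalars are real and positive (which follows from $\det E = \det F \in \mathbb{R}$ and a continuity/sign argument along the lattice), and one must pin down that "concircular" really is equivalent to "cross ratio real" in whatever model one uses — in the Poincaré ball or upper half-space model this is standard, but the passage from the $E\bar E^T$ form to that model (via the projection \eqref{firstPoincareprojection}, \eqref{secondPoincareprojection}) needs to be invoked carefully. A secondary subtlety is that the four points genuinely lie on a circle and not just on a line-or-circle, but for the statement as phrased (concircular quadrilateral) a circle-or-line is the right conclusion and is what the cross-ratio condition delivers. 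Once the invariance is set up, the final reality check is a direct, if slightly tedious, computation with the explicit $2\times2$ matrices $\mathfrak{A}_{**}$, using only the definition \eqref{rats2} of discrete holomorphicity and the symmetry and edge-labelling of $\alpha$; I expect no genuine difficulty there beyond care with the noncommutative order of factors that the paper has deliberately preserved in its cross-ratio convention.
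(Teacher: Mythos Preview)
Your approach is essentially the same as the paper's: both compute the matrix cross ratio
\[
C = (f_{0,p}-f_{0,q})(f_{0,q}-f_{0,r})^{-1}(f_{0,r}-f_{0,s})(f_{0,s}-f_{0,p})^{-1}
\]
directly and show it is a real scalar times the identity. The paper writes $E_q = E_p U$, $E_s = E_p V$, $E_r = E_p U V_1 = E_p V U_1$ with the explicit $2\times 2$ transition matrices you describe, expands each difference $f_{0,\ast}-f_{0,\ast}$ in terms of $E_p$ and these matrices, and after cancellation obtains $C = \mu\,\tfrac{\alpha_{ps}}{\alpha_{pq}}\,I$ with $\mu\in\mathbb{R}$.

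One genuine gap to flag: your assertion that ``the scalings do not affect reality of the cross ratio'' is not correct as a general principle, and you cannot reduce to the quadruple $I,\,A\bar A^T,\,B\bar B^T,\,C\bar C^T$ on that basis. The scalars $1/\det E_\ast$ differ from vertex to vertex (for instance $\det E_q = (1-\lambda\alpha_{pq})\det E_p$), and multiplying four Hermitian matrices by four different positive reals can take a cross ratio that is a scalar multiple of $I$ to one that is not. The paper does not attempt to strip these factors off; it carries $\det(U)$, $\det(V)$, $\det(U_1)$, $\det(V_1)$ through the calculation, observes they are all real, and lets them be absorbed into the real scalar $\mu$ at the end. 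Apart from this point, your plan --- conjugate away $E_p$ and then grind the $2\times 2$ algebra using the factorization \eqref{rats2} --- is exactly what the paper does.
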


\begin{remark}\label{rem:nonisothermicity}
Although they have concircular quadrilaterals, discrete flat surfaces 
are generally not discrete isothermic.  This is expected, since smooth 
flat surfaces given as in Section \ref{section3pt2} are 
generally not isothermically parametrized as well.  
\end{remark}

\begin{proof}
Let $f_{0} = \frac1{\det(E)}E \bar{E}^T$ be a discrete flat surface 
defined on a domain 
$D \subseteq \mathbb{Z}^2$, formed from a discrete holomorphic function $g$ on $D$.  
Let $p=(m,n)$, $q=(m+1,n)$, $r=(m+1,n+1)$ and $s=(m,n+1)$ be the vertices 
of one quadrilateral in $D$, and let $E_p$, $E_q$, $E_r$ and $E_s$ be 
the respective values of $E$ at those vertices.  
We choose a cross ratio factorizing function for $g$, and we denote that 
function's value on the edge from $p$ to $q$, 
respectively $p$ to $s$, as $\alpha_{pq}$, respectively 
$\alpha_{ps}$.  Then 
$$
\begin{array}{rcl}
  E_q = E_p U \; , \;\;\;\;\; 
  U &=& \quadmatrix{1}{g_q-g_p}{\frac{\lambda \alpha_{pq}}{g_q-g_p}}{1} \; , \\
  E_s = E_p V \; , \;\;\;\;\; 
  V &=& \quadmatrix{1}{g_s-g_p}{\frac{\lambda \alpha_{ps}}{g_s-g_p}}{1} \; , \\
  E_r = E_p U V_1 = E_p V U_1 \; , \;\;\;\;\; 
  U_1 &=& \quadmatrix{1}{g_r-g_s}{\frac{\lambda \alpha_{pq}}{g_r-g_s}}{1} 
  \; , \\
  V_1 &=& \quadmatrix{1}{g_r-g_q}{\frac{\lambda \alpha_{ps}}{g_r-g_q}}{1} 
\end{array}
$$ 
(note that we have $UV_1 = VU_1$, by the cross ratio identity in 
Equation \eqref{rats2}).  

Now the cross ratio for the
quadrilateral given by the four surface vertices $f_{0,p}$, $f_{0,q}$, 
$f_{0,r}$ and $f_{0,s}$ is 
\[ C = (f_{0,p} - f_{0,q})(f_{0,q} - f_{0,r})^{-1} (f_{0,r} - 
f_{0,s})(f_{0,s} - f_{0,p})^{-1} \; , \] 
or equivalently \[ (f_{0,p} - f_{0,q})(f_{0,q} - f_{0,r})^{-1} = 
C (f_{0,p} - f_{0,s})(f_{0,s} - f_{0,r})^{-1} \; . \]  
(Note that commutativity does not hold for the product 
of four terms in this $C$, so it is vital that the 
order of the product be given correctly.)  
To prove the theorem, it suffices to show that $C$ is a real scalar factor 
times the identity matrix.  

Note that 
\[ f_{0,p} - f_{0,q} = 
\frac1{\det(E_p)} E_p ( I - \frac1{\det(U)}U \bar U^T) \bar E_p^T \]
and
\[ 
f_{0,q} - f_{0,r} = 
\frac1{\det(E_p)} \frac1{\det(U)} E_p U ( I - \frac1{\det(V_1)} V_1 
\bar V_1^T) \bar U^T \bar E_p^T \; , \]
so we have
\[ 
(f_{0,p} - f_{0,q})(f_{0,q} - f_{0,r})^{-1} = \det(U) E_p 
( I - \frac1{\det(U)}U \bar U^T) (\bar U^T)^{-1} ( I - \frac1{\det(V_1)} V_1 
\bar V_1^T)^{-1} U^{-1} E_p^{-1} \; , \]
and likewise
\[
(f_{0,p} - f_{0,s})(f_{0,s} - f_{0,r})^{-1} = 
\det(V) E_p ( I - \frac1{\det(V)}V \bar V^T) 
(\bar V^T)^{-1}  ( I - \frac1{\det(U_1)} U_1 \bar U_1^T)^{-1} V^{-1} 
E_p^{-1} \; . \]
So one finds that 
\[
\det(U_1) E_p (\det(U) (\bar U^T)^{-1} - 
U)(\det(V_1)V_1^{-1}- \bar V_1^T)^{-1} V_1^{-1} U^{-1} =
\]
\[
 = \det(V_1) C E_p (\det(V)(\bar V^T)^{-1} - V)(\det(U_1) U_1^{-1} - 
\bar U_1^T)^{-1} U_1^{-1} V^{-1} \; . \]
Because $U V_1 = V U_1$, we have 
\[
\frac{\det(U_1)}{\det(V_1)} E_p (\det(U) (\bar U^T)^{-1} - 
U)(\det(V_1)V_1^{-1} - \bar V_1^T)^{-1} = 
\] 
\[ C E_p (\det(V)(\bar V^T)^{-1} - V)(\det(U_1) U_1^{-1} - 
\bar U_1^T)^{-1} \; . 
\]
The determinants of $U$ and $V$ are real, so for example 
$ \det(U)(\bar U^T)^{-1} = 
\quadmatrix{1}{\frac{\lambda \alpha_{pq}}{\bar g_p - 
\bar g_q}}{\bar g_p - \bar g_q}{1}$ and we get 
\[
 \det(U) (\bar U^T)^{-1} - 
U = (|g_q-g_p|^2+\lambda \alpha_{pq}) 
\begin{pmatrix}
0 & \frac{1}{\bar g_p-\bar g_q} \\ 
\frac{1}{g_p-g_q} & 0 
\end{pmatrix} \; .
\]
With similar expressions for the other differences we see that 
\[
\frac{\det(U_1)}{\det(V_1)} 
\frac{|g_q-g_p|^2+\lambda \alpha_{pq}}{|g_r-g_q|^2+\lambda \alpha_{ps}}
E_p 
\begin{pmatrix}
0 & \frac{1}{\bar g_p - \bar g_q} \\ 
\frac{1}{g_p - g_q} & 0 
\end{pmatrix}
\begin{pmatrix}
0 & g_q - g_r \\ 
\bar g_q - \bar g_r & 0 
\end{pmatrix}
= \]
\[ =  
\frac{|g_s-g_p|^2+\lambda \alpha_{ps}}{|g_r-g_s|^2+\lambda \alpha_{pq}} 
C E_p 
\begin{pmatrix}
0 & \frac{1}{\bar g_p - \bar g_s} \\ 
\frac{1}{g_p - g_s} & 0 
\end{pmatrix}
\begin{pmatrix}
0 & g_s - g_r \\ 
\bar g_s - \bar g_r & 0 
\end{pmatrix} \; . 
\]
So the expression for $C$ is
\[ C = \mu E_p 
\begin{pmatrix}
\frac{\bar g_q - \bar g_r}{\bar g_p - \bar g_q}\frac{\bar g_s - 
\bar g_p}{\bar g_r - \bar g_s} & 0 \\ 
0 & \frac{g_q - g_r}{g_p - g_q}\frac{g_s -
g_p}{g_r -g_s}
\end{pmatrix} 
E_p^{-1} = \mu \cdot \frac{\alpha_{ps}}{\alpha_{pq}} 
\cdot \quadmatrix{1}{0}{0}{1} \]
for some real factor $\mu$.  This concludes the proof.  
\end{proof}

\subsection{Discrete linear Weingarten surfaces of Bryant type}
\label{section:dlwsobt}
We can now look at discrete linear 
Weingarten surfaces in $\mathbb{H}^3$, similarly to the approach taken 
in Subsection \ref{smoothLinWein} 
for the smooth case.  In the discrete case, one takes 
$E_p(t) = E_p \cdot L_p$, for 
\[ L_p = \begin{pmatrix}
\sqrt{\frac{1 + t g_p\bar g_p}{1 + t^2 g_p\bar g_p}} & 
- t g \sqrt{\frac{1 + t g_p\bar g_p}{1 + t^2 g_p\bar g_p}} \\ 
0 & \sqrt{\frac{1 + t^2 g_p\bar g_p}{1 + t g_p\bar g_p}} 
\end{pmatrix} \; . \]
We can then define the discrete linear Weingarten surface of 
Bryant type to be 
\[ f_{t,p} = \frac{1}{\det E_p(t)} E_p(t) \overline{E_p(t)}^T \; . \]  

Note that even though $f_t$ has been defined using one particular 
choice for the deformation through linear Weingarten surfaces 
(and as seen before, this choice is not canonical), the 
resulting collection of all discrete linear Weingarten surfaces does 
not depend on the choice of deformation.  This follows from properties 
analogous to those for the smooth case in 
Section \ref{rem:ambiguityofGMM}: 
\begin{itemize}
\item If $g_{m,n}$ is a discrete holomorphic function defined 
on a domain $D$, then $g_{m,n}+a$ is also discrete holomorphic 
with the same cross ratios, for any choice of complex 
constant $a$.  
\item If a cross ratio factorizing function for $g_{m,n}$ is 
$\alpha$, then $\hat g_{m,n}$ defined by 
\begin{equation}\label{eqn:geometryandsomething} 
\hat g_q-\hat g_p = \frac{-\alpha_{pq}}{g_q-g_p} \end{equation} 
(where $pq$ represents both horizontal and vertical edges) 
is also a discrete holomorphic function that is well defined 
on $D$ (i.e. $\hat g_{m,n}$ is not multi-valued 
once an initial condition 
is fixed in the above difference equation 
\eqref{eqn:geometryandsomething}), again with the 
same cross ratios as $g_{m,n}$.  
\item If Equation \eqref{star15} holds, then we also have 
\[ 
(E_q-E_p) \cdot \begin{pmatrix}
0 & \tfrac{1}{\sqrt{\lambda}} \\ -\sqrt{\lambda} & 0 
\end{pmatrix} = 
E_p \cdot \begin{pmatrix}
0 & \tfrac{1}{\sqrt{\lambda}} \\ -\sqrt{\lambda} & 0 
\end{pmatrix} \cdot 
\begin{pmatrix}
0 & \hat g_q-\hat g_p \\ 
\tfrac{\lambda \alpha_{pq}}{\hat g_q-\hat g_p} & 0 
\end{pmatrix} \; , 
\] and so both $E$ and 
\[ E \cdot \begin{pmatrix}
0 & \tfrac{1}{\sqrt{\lambda}} \\ -\sqrt{\lambda} & 0 
\end{pmatrix}
\] will produce discrete flat surfaces in $\mathbb{H}^3$, and these 
two surfaces are parallel surfaces of each other (see 
Remark \ref{rem-parallelsurfaces}).  
The two resulting linear Weingarten families will be different.  
\item For any constant matrix 
\[ \begin{pmatrix} 
a & b \\ -\bar b & \bar a 
\end{pmatrix} \in \text{SU}_2 \; , \] 
The function 
\[ \tilde g_{m,n} = \frac{a g_{m,n} + b}{-\bar b g_{m,n} + \bar a} 
\] is also discrete holomorphic, with the same cross ratios as $g$.  
\item If Equation \eqref{discretebryeqn} holds, then we also have 
\[ (F_q-F_p) \cdot \begin{pmatrix} 
\bar a & - b \\ \bar b & a 
\end{pmatrix} = F_p \cdot \begin{pmatrix}
\bar a & - b \\ \bar b & a 
\end{pmatrix} \cdot 
\begin{pmatrix}
\tilde g_p & -\tilde g_p \tilde g_q \\ 1 & -\tilde g_q 
\end{pmatrix} \frac{\lambda \alpha_{pq}}{\tilde g_q-\tilde g_p} \; , \]
and so both $F$ and 
\[ F \cdot \begin{pmatrix}
\bar a & - b \\ \bar b & a 
\end{pmatrix} \] will produce the same discrete 
CMC $1$ surface in $\mathbb{H}^3$, but will give different linear 
Weingarten families.  
\end{itemize}

\begin{remark}\label{discreteairyduality}
When taking $g$ (resp. $\hat g$) to be the discrete power function 
$z^\gamma$ (resp. $-z^{\hat \gamma}$) as in \eqref{eqn:alphagnm} 
and \eqref{eqn-agafonovaxes}, Equation 
\eqref{eqn:geometryandsomething} will hold with $\alpha_{pq}=1$ on 
horizontal edges and $\alpha_{pq}=-1$ on vertical edges if 
\[ \gamma+\hat \gamma = 2 \] 
(see Lemma \ref{lem-app8} in Appendix \ref{appendix}).  
Thus by the third item above, $g$ and $\hat g$ will produce two 
discrete flat surfaces that are parallel 
to each other.  We saw this same behavior in the smooth 
case as well, see Remark \ref{smoothairyduality}.  
\end{remark}

We have the following result, which can be proven 
similarly to the way Theorem \ref{thm:concirc-quads} was proven.  

\begin{theorem}
For any $t$, the resulting linear Weingarten surface $f_t$ 
has concircular quadrilaterals.  
\end{theorem}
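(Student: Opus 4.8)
The plan is to mirror the proof of Theorem \ref{thm:concirc-quads} almost verbatim, replacing the flat frame $E$ by the linear Weingarten frame $E(t) = E \cdot L$. First I would record the transition matrices for $E(t)$ around a single quadrilateral with vertices $p,q,r,s$: since $E_q(t) = E_p(t) \cdot \tilde U$ where $\tilde U = L_p^{-1} U L_q$ (and similarly $\tilde V = L_p^{-1} V L_s$, $\tilde U_1 = L_s^{-1} U_1 L_r$, $\tilde V_1 = L_q^{-1} V_1 L_r$), one immediately gets the compatibility $\tilde U \tilde V_1 = \tilde V \tilde U_1$ from $U V_1 = V U_1$ together with the telescoping of the $L$'s. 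The determinants satisfy $\det \tilde U = \det U$, etc., and crucially each $L_p$ is \emph{real} (the entries $\beta$, $-tg_p\beta$ are real only if $g_p$ is real, so here one uses that $\overline{L_p}^T$ is the transpose $L_p^T$ — actually $L_p$ is not real in general, so one instead carries $\overline{L_p}^T$ explicitly, which is harmless since it cancels). Then $f_{t,p} = \frac{1}{\det E_p(t)} E_p(t)\overline{E_p(t)}^T$ has exactly the same structural form as $f_{0,p}$, so the cross ratio $C$ of the quadrilateral $\{f_{t,p}, f_{t,q}, f_{t,r}, f_{t,s}\}$ can be computed by the identical chain of manipulations.

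The key steps, in order, are: (1) write $E_p(t) = E_p L_p$ and derive the four transition matrices $\tilde U, \tilde V, \tilde U_1, \tilde V_1$ as $L$-conjugates of $U, V, U_1, V_1$, verifying $\tilde U \tilde V_1 = \tilde V \tilde U_1$; (2) express the four edge differences $f_{t,p}-f_{t,q}$, $f_{t,q}-f_{t,r}$, $f_{t,p}-f_{t,s}$, $f_{t,s}-f_{t,r}$ in the form $\frac{1}{\det E_p(t)}(\cdots) E_p(t)(\,I - \tfrac{1}{\det(\cdot)}(\cdot)\overline{(\cdot)}^T\,)(\cdots)\overline{E_p(t)}^T$, exactly as in the proof of Theorem \ref{thm:concirc-quads}; (3) form $C = (f_{t,p}-f_{t,q})(f_{t,q}-f_{t,r})^{-1}(f_{t,r}-f_{t,s})(f_{t,s}-f_{t,p})^{-1}$ and cancel, using $\tilde U \tilde V_1 = \tilde V \tilde U_1$ and the reality of the relevant determinants, to reduce $C$ to $\mu\, E_p(t) \, D \, E_p(t)^{-1}$ with $D$ diagonal; (4) observe that the diagonal entries of $D$ are the complex cross ratio of the $g$-values and its conjugate — hence each is a real scalar (the cross ratio of a discrete holomorphic function is real and negative by \eqref{rats2}) — so $D$ is a real multiple of the identity and therefore $C = \mu' \cdot I$, giving concircularity.

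The main obstacle I anticipate is step (1)–(3): the matrix $L_p$ depends on the vertex (through $g_p\bar g_p$), so unlike the flat case the conjugating factors on the two ends of each transition matrix are \emph{different} ($L_p^{-1}$ on the left, $L_q$ on the right). One must check that, after forming the telescoping product $\tilde U \tilde V_1 = L_p^{-1}(U V_1) L_r$ and $\tilde V \tilde U_1 = L_p^{-1}(V U_1) L_r$, the identity still closes — which it does, precisely because the interior factors $L_q, L_s$ cancel against their inverses — and, more delicately, that the quantity $\det(\tilde U)(\overline{\tilde U}^T)^{-1} - \tilde U$ still has the clean off-diagonal-only form that drove the cancellation in the flat case. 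This last point requires verifying that conjugation by the (generally non-unitary) $L_p$ does not spoil the key algebraic structure; I expect it survives because the final answer $C = \mu' I$ is conjugation-invariant, so one can afford to pass the $L_p$ factors through to the outside. Once this bookkeeping is done, step (4) is identical to the flat case and the theorem follows.
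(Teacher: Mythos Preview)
Your plan is exactly what the paper does: its entire proof is the single sentence ``can be proven similarly to the way Theorem~\ref{thm:concirc-quads} was proven,'' and you are proposing precisely that. So at the level of strategy there is nothing to compare.

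There is, however, a genuine gap in your step~(3)--(4). You correctly identify the obstacle: with $\tilde U = L_p^{-1} U L_q$ one has
\[
\det(\tilde U)(\overline{\tilde U}^{\,T})^{-1} - \tilde U
\;=\;
\det(U)\,\overline{L_p}^{\,T}(\bar U^T)^{-1}(\overline{L_q}^{\,T})^{-1} \;-\; L_p^{-1} U L_q ,
\]
and since $L_p\notin\mathrm{SU}_2$ in general, $\overline{L_p}^{\,T}\neq L_p^{-1}$, so the two terms carry \emph{different} left factors. You cannot ``pass the $L_p$ factors through to the outside'': there is no common conjugation to strip off, and the resulting matrix is \emph{not} purely off-diagonal. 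Your reason for expecting the cancellation to survive --- ``the final answer $C=\mu' I$ is conjugation-invariant'' --- is circular: that $C$ is scalar is exactly the conclusion you are trying to establish. Likewise step~(4) presumes $D$ is diagonal with entries the cross ratio of the $g$-values and its conjugate; in the flat case this followed because each of the four differences was off-diagonal, but here that structure is gone, so the claim about $D$ requires its own argument.

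What actually has to happen is a direct computation: multiply $E_p(t)=E_pL_p$ on the left of $\det(\tilde U)(\overline{\tilde U}^{\,T})^{-1}-\tilde U$ so that the Hermitian matrix $M_p:=L_p\overline{L_p}^{\,T}$ appears, carry the four factors through explicitly (each now involves $M_p,M_q,M_r,M_s$ rather than just $g_p,\dots,g_s$), and verify by brute force that the product collapses to a real scalar times the identity. This is longer than the flat case but elementary; the paper simply omits it. Your outline is fine as a roadmap, but the sentence ``I expect it survives'' is where the work is, not a substitute for it.
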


\begin{remark}
Note again that, like in Remark \ref{rem:nonisothermicity}, 
when $t$ is not $1$, the surface will not be discrete 
isothermic in general.
\end{remark}

\subsection{Examples}

We now give three discrete examples, in parallel with the previous 
smooth examples \ref{examplewithqconstant}, 
\ref{smoothsurfacesofrev} and \ref{thirdsmoothexample}.  The 
third example is in the next section \ref{section4}.  

\begin{example}\label{example4pt11}
Here we discretize Example \ref{examplewithqconstant}.  
We define, for any nonzero 
constant $q>0$, \[ g_{m,n} = q \cdot (m+i n) \; , \] and then 
\[ (g_{m+1,n}-g_{m,n}) (g_{m+1,n+1}-g_{m+1,n})^{-1} 
(g_{m,n+1}-g_{m+1,n+1}) (g_{m,n}-g_{m,n+1})^{-1} = -1 \; , \] 
so we can define the cross ratio factorizing function as 
(where $n_p$ denotes the $n$ coordinate of $p \in D$) 
\[ \alpha_{pq} = (-1)^{n_p+n_q} \; . \]
We take a solution $F$ of Equation \eqref{discretebryeqn}.  
We have
\[ F_{m+1,n} = F_{m,n} U_{m,n} \; , \;\;\; F_{m,n+1} = 
F_{m,n} V_{m,n} \; , \] 
where
\[ U_{m,n} = 
\begin{pmatrix}
1+\lambda (m+in) & -\lambda q (m+in)(1+m+in) \\ 
\lambda q^{-1} & 1-\lambda (1+m+in)
\end{pmatrix} \; , \]\[ V_{m,n} = 
\begin{pmatrix}
1+\lambda (im-n) & -i \lambda q (m+in)(i+m+in) \\ 
i \lambda q^{-1} & 1-\lambda (im-n-1) \end{pmatrix} \; . 
\]
We have the necessary compatibility condition 
\[ V_{m,n} U_{m,n+1} = U_{m,n} V_{m+1,n} \; . \]
Then, using this $F_{m,n}$, we can construct the Weingarten 
family for this holomorphic function $g_{m,n}$.  There are 
special isolated values of the scaling $\lambda$ that give 
atypical results, just like in the smooth case in Example 
\ref{examplewithqconstant}, where $|q|=1$ gave the atypical 
result of a geodesic line for the resulting flat "surface".  
However, usually the resulting flat surface 
will be a discrete cylinder.  
\end{example}

\begin{figure}[phbt]
  \centering
  \includegraphics[scale=0.4]{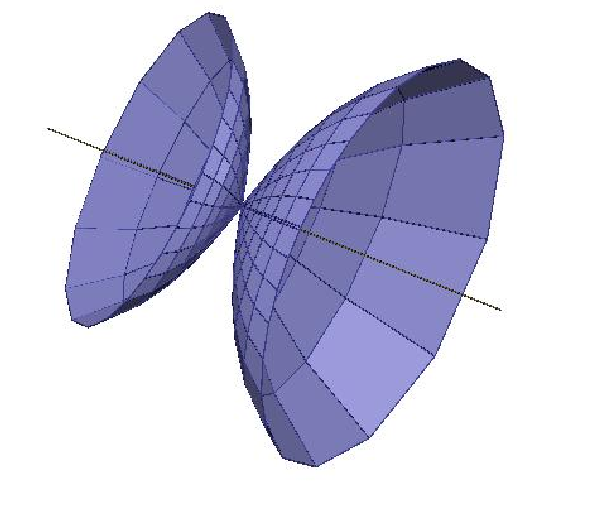}
  \includegraphics[scale=0.55]{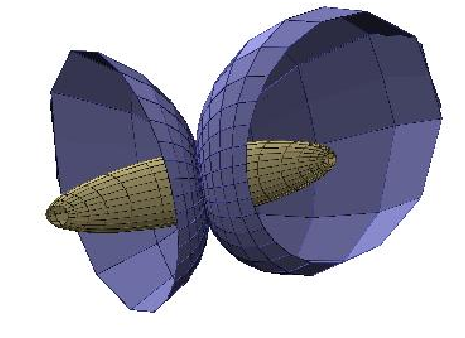}
  \caption{Discrete flat surfaces of revolution and their caustics, 
           shown in the Klein model for $\mathbb{H}^3$.  The left surface 
           is called a "snowman", and the one on the right is called 
           an "hourglass".}
  \label{uglyfigure8}
\end{figure}

\begin{example}\label{example4pt12}
Like in the smooth case (Example \ref{smoothsurfacesofrev}), we can 
use discrete holomorphic 
exponential functions $g_{m,n} = e^{c(m+in)}$ for 
any nonzero constant $c \in \mathbb{R} \cup 
(i \mathbb{R})$ to construct discrete 
flat surfaces of revolution.  See Figure \ref{uglyfigure8}.  
\end{example}

\begin{remark} In Figure \ref{uglyfigure8}, we use the Klein model 
for $\mathbb{H}^3$.  We sometimes find the Klein ball model to 
           be more convenient than the Poincare ball model, because 
           geodesics in the Klein model are the same as Euclidean 
           straight lines.  Since we are dealing with discrete 
           surfaces with geodesic edges, this can be convenient.  
           This is particularly useful when looking at the 
           intersection set of two discrete surfaces.
\end{remark}

\section{An example related to the Airy equation, and 
Stokes phenomenon}\label{section4}

\begin{figure}[phbt]
  \centering
  \includegraphics[scale=0.32]{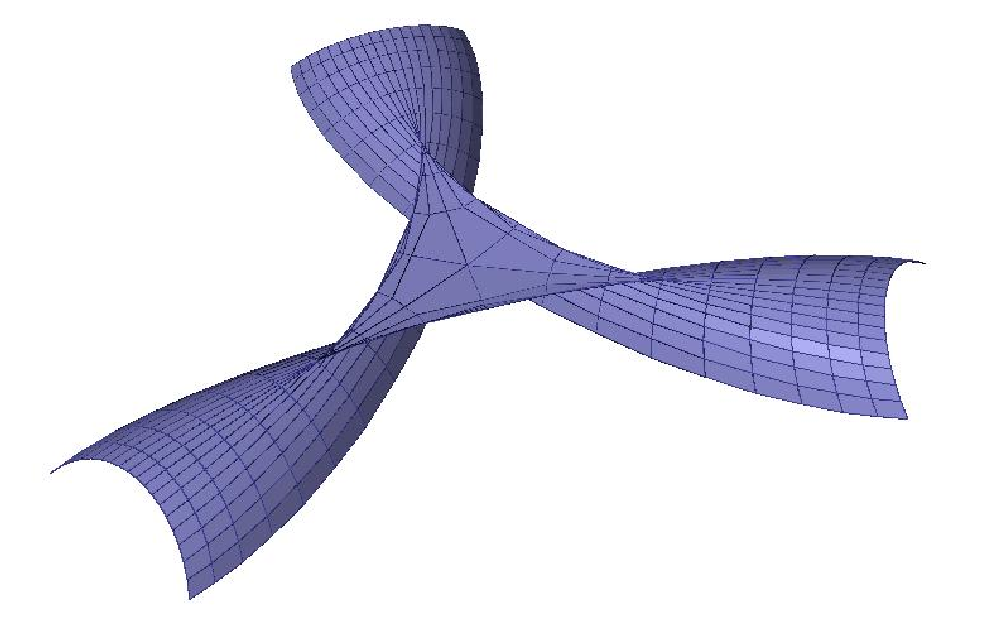}
  \includegraphics[scale=0.32]{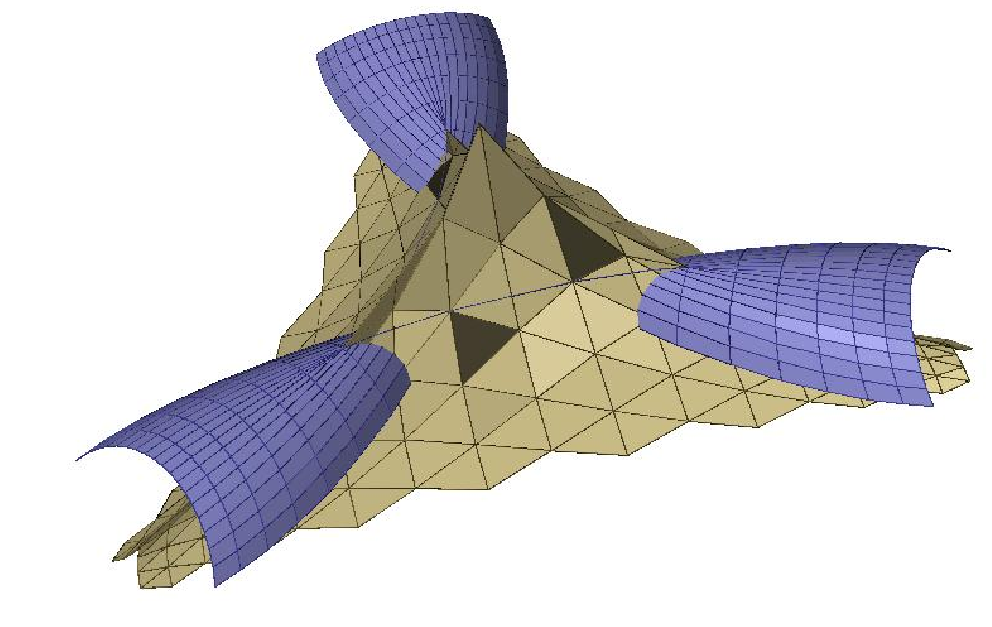}
  \caption{A discrete 
     flat surface in the Klein model constructed from the discrete 
     holomorphic function $z^{4/3}$, and thus related to the 
     Airy equation.  The surface is on the left, and the 
     surface together with its discrete caustic is on the right.  
     Because the discrete version of 
     $z^\gamma$ is constructed from circle packings, every fourth vertex 
     has two adjacent horizontal (resp. vertical) edges 
     for which $|dg=g_q-g_p|$ is the same.  Since $|dg|$ 
     determines the distance to focal points, we always have three normal 
     geodesics off vertices of the surface meeting at a single focal point. 
     It follows that $C_f$ consists of triangles in this case.}
  \label{uglyfigure6}
\end{figure}

The following Example \ref{foruglyfigure6} 
is of interest because it has similar properties 
to the corresponding surface in the smooth case: 
the surface in the smooth case has trifold 
symmetry and has three swallowtail singularities connected by three 
cuspidal edges, and also has a Stokes phenomenon in its asymptotic 
behavior \cite{SY} (see Example \ref{thirdsmoothexample}).  

\subsection{The discrete flat surface made with the discrete 
power function $z^{4/3}$}
We now give an example related to the Airy equation.  

\begin{example}\label{foruglyfigure6}
For this discrete example (see Figure \ref{uglyfigure6}), we need 
the discrete version of $g=z^{4/3}$, as defined in Section 
\ref{sect:2pt2}.  
Because we have a discrete holomorphic function $z^{4/3}$, 
we are in a position to be able to consider the 
resulting discrete flat surface via Equations 
\eqref{discretebryeqn}, \eqref{EpEpEp} and 
\eqref{starstar1}, and also a discrete Airy equation, analogous 
to Equation \eqref{eqn:q}.  

Numerical evidence (see Figure 
\ref{uglyfigure6}) suggests similar corresponding 
swallowtail singularities for this discrete flat surface, 
analogous to those 
for the smooth case in Example \ref{thirdsmoothexample}.  

In Figure \ref{stokes}, portions of the hyperbolic Schwarz 
image (i.e. this discrete flat surface) related to the Airy equation 
are shown.  The image 
of the discrete half-line $z>0$ ($z \in \mathbb{Z}$) is drawn in red,
which has a single limit point in the boundary sphere.
The image of the discrete half-line $iz<0$ ($z \in i \mathbb{Z}$) is drawn in blue, 
which has no single limit point in the boundary sphere and 
instead wraps around infinitely many times.  This behavior 
is typical for the continuous Airy case along the Stokes direction, 
and the corresponding curves on the smooth flat surface associated 
with the Airy equation behave in the same way 
(see Example \ref{thirdsmoothexample}).  This provides a numerical 
confirmation of a Stokes phenomenon for the discrete Airy function.  

We also note that the numerics suggest a behavior 
similar to that of the smooth case regarding singularities, as the 
image looks from a distance as though it has three cuspidal edge arcs 
connecting at three swallowtail singularities.  Furthermore the 
discrete caustic of this flat surface 
has a similar behavior to that of the corresponding 
smooth caustic, in that it points sharply outward at its center 
of symmetry.  (Caustics will be introduced in the next section.)  
\end{example}

\begin{figure}[h]
\begin{center}
 \includegraphics[width=5cm]{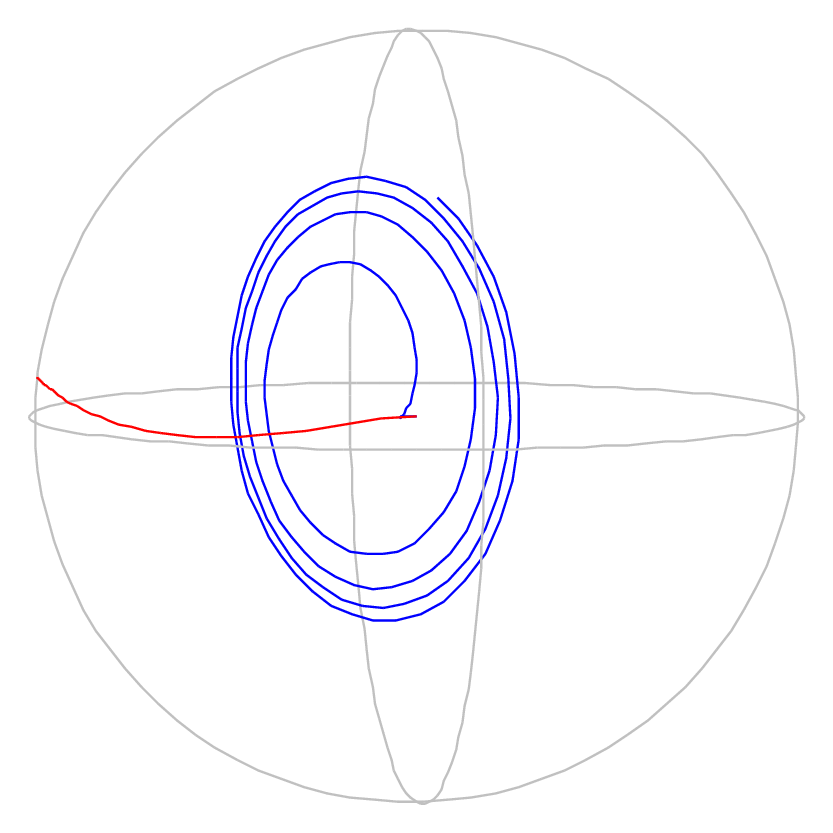}
\end{center}
\caption{Asymptotic limit of two curves in the discrete 
flat surface associated with the Airy equation.}
\label{stokes}
\end{figure}

\begin{figure}[h]
\begin{center}
\begin{tabular}{cc}
$t=0$\quad (flat) & $t=1/10$  \\
  \includegraphics[width=5cm]{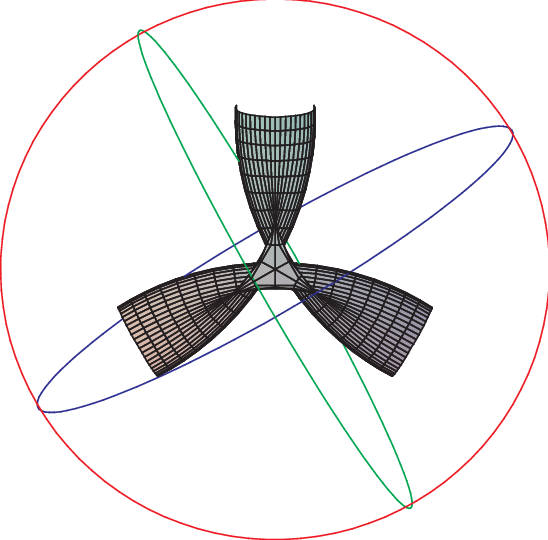} \qquad &
\qquad  \includegraphics[width=5cm]{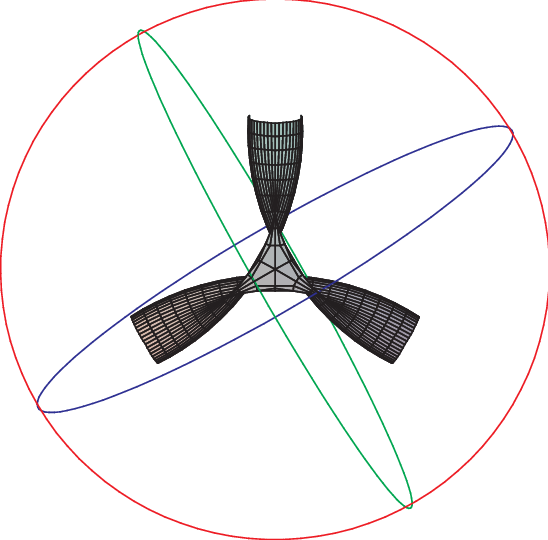} \\
$t=1/2$ & $t=1$\quad (CMC1) \\
  \includegraphics[width=5cm]{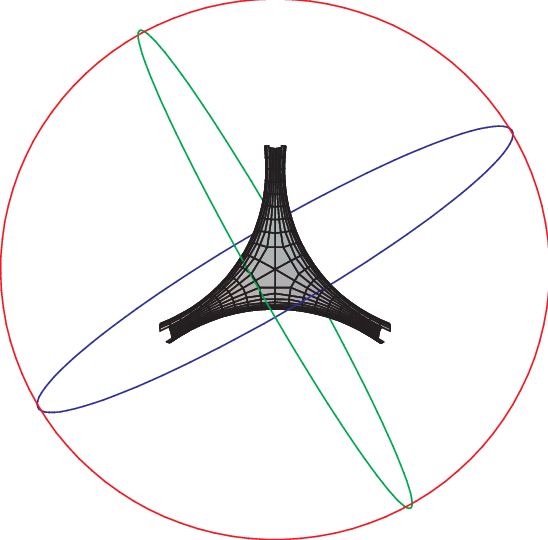} \qquad &
\qquad   \includegraphics[width=5cm]{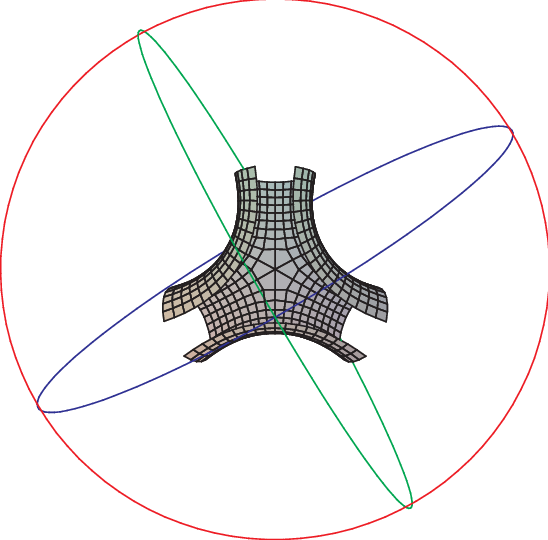} 
\end{tabular}
\end{center}
\caption{The linear Weingarten family of discrete surfaces 
for a discrete flat surface associated with the Airy equation, 
in the Poincare ball model.  (The upper-left surface here 
is the same as the left-hand surface in Figure 
\ref{uglyfigure6}.)}
\label{discairycmc}
\end{figure}

\section{Caustics of discrete flat surfaces}
\label{section5}

In this and the next section (and in Appendix \ref{appendix9}), 
since we will consider flat surfaces and their caustics 
exclusively, we will abbreviate the notation "$f_0$" to "$f$".  

\subsection{Definition of discrete caustics}
Let $C_{f}$ 
be the collection of focal points, i.e. the {\em focal surface}, 
also called 
the {\em caustic}, of a smooth flat surface $f$ in 
$\mathbb{H}^3$ (note that we 
should assume $f$ is a flat front, so that the caustic will exist).  
If $E_{f}$ is the lift of $f = E_{f} \bar E_{f}^T$ 
(determined from $g$), then (see \cite{KRSUY}) 
\begin{equation}\label{star29} E_{C_{f}} = E_{f} 
\cdot \begin{pmatrix}
\sqrt{g^\prime} & 0 \\ 0 & \frac{1}{\sqrt{g^\prime}} 
\end{pmatrix} \cdot P \; , \;\;\;\;\; P = \frac{1}{\sqrt{2}}\begin{pmatrix}
1 & \sqrt{-1} \\ \sqrt{-1} & 1 
\end{pmatrix} \; , \end{equation}
is a lift of $C_{f} = E_{C_{f}} \bar E_{C_{f}}^T$.  
Although we have just described the caustic in terms of 
Weierstrass data, it is independent of the choice of that data.  
We know that $C_{f}$ is a flat surface (see \cite{KRSUY}, \cite{KRUY1}), 
because its lift $E_{C_{f}}$ satisfies the following equation: 
\begin{equation}\label{caustic-pot-smooth} 
(E_{C_{f}})^{-1} dE_{C_{f}} = 
\begin{pmatrix}
0 & 1 + \frac{\sqrt{-1}}{2} \frac{g^{\prime\prime}}{g^\prime} \\ 
1 - \frac{\sqrt{-1}}{2} \frac{g^{\prime\prime}}{g^\prime} & 0 
\end{pmatrix} dz \; . \end{equation}

For the case of a discrete flat surface $f$ 
in $\mathbb{H}^3$ with discrete lift $E$, 
we must first consider how to define the caustic $C_{f}$.  We can 
define the normal $N_p$ as in \eqref{flatsurfnormal2} 
at each vertex $f_{p}$ of $f$, so we have normal 
geodesics emanating from each vertex, and we can consider 
when normal geodesics of adjacent vertices will intersect.  Once we 
have those intersection points, we will see that we can consider 
them as vertices of $C_{f}$, giving us a definition for 
$C_{f}$.  

\begin{lemma}\label{single-caustic-lemma}
Let $f$ be a discrete flat surface in $\mathbb{H}^3$ with lift $E$, 
as in \eqref{star15}, constructed 
using the discrete holomorphic function $g$.  Let $\alpha_{pq}$ be 
a cross ratio factorizing function for $g$.  Then the normal geodesics 
in $\mathbb{H}^3$ emanating from two adjacent vertices $f_{p}$ and 
$f_{q}$ will intersect 
if and only if $\lambda \alpha_{pq} < 0$, in which case the 
intersection point is unique and is equidistant from $f_{p}$ 
and $f_{q}$.  
Furthermore, even when the two normal geodesics do not intersect, 
they still lie in a single common geodesic plane.  
\end{lemma}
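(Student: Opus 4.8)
The plan is to work in the matrix model $\mathbb{H}^3 = \{F \bar F^T : F \in SL_2(\mathbb{C})\}$ and exploit the fact that a normal geodesic emanating from $f_p = \frac{1}{\det E_p} E_p \bar E_p^T$ in the direction $N_p = \frac{1}{\det E_p} E_p \operatorname{diag}(1,-1) \bar E_p^T$ admits an explicit parametrization: by Remark \ref{rem-parallelsurfaces} (see Equation \eqref{starstar3}), the geodesic through $f_p$ with initial velocity $N_p$ is exactly the curve $\tau \mapsto \cosh(\tau) f_p - \sinh(\tau) N_p = \frac{1}{\det E_p} E_p \operatorname{diag}(e^{-\tau}, e^{\tau}) \bar E_p^T$. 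This is the crucial normalization: left-multiplying $E_p$ by the fixed matrix that simultaneously sends $f_p$ and $N_p$ to their "standard" forms reduces the whole problem to a statement about geodesics through a common reference point with diagonal-matrix parametrizations.

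First I would reduce to the case $p = (m,n)$, $q = (m+1,n)$ (the vertical-edge case being identical) and normalize so that $E_p = I$, i.e. $f_p = \operatorname{diag}(1,1)$-type point and $N_p = \operatorname{diag}(1,-1)$; this is legitimate since a rigid motion of $\mathbb{H}^3$ is just $X \mapsto G X \bar G^T$, which acts on lifts by $E \mapsto GE$, and preserves geodesics, distances, and incidence of geodesic planes. Then by Equation \eqref{star15}, $E_q = U$ where $U = \left(\begin{smallmatrix} 1 & g_q - g_p \\ \lambda\alpha_{pq}/(g_q-g_p) & 1\end{smallmatrix}\right)$, with $\det U = 1 - \lambda\alpha_{pq}$. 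The two normal geodesics are $\gamma_p(\tau) = \operatorname{diag}(e^{-\tau}, e^{\tau})$ and $\gamma_q(\sigma) = \frac{1}{\det U} U \operatorname{diag}(e^{-\sigma}, e^{\sigma}) \bar U^T$. I would then set these equal as points in $\mathbb{H}^3$ (equivalently, as rank-one-type Hermitian matrices up to the $SL_2$-normalization) and solve. Writing $\delta = g_q - g_p \ne 0$ and $c = \lambda\alpha_{pq}$, a direct computation of $U \operatorname{diag}(e^{-\sigma},e^{\sigma})\bar U^T$ gives a $2\times 2$ Hermitian matrix whose off-diagonal entry is $\delta e^{\sigma} + (c/\bar\delta) e^{-\sigma}$ (up to a factor); forcing this to match the off-diagonal entry $0$ of $\gamma_p(\tau)$ yields $e^{2\sigma} = -c/|\delta|^2$, which has a (unique) real solution for $\sigma$ precisely when $c = \lambda\alpha_{pq} < 0$. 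Matching the diagonal entries then pins down $\tau$ uniquely, and symmetry of the resulting expressions in $p \leftrightarrow q$ (the equation $e^{2\sigma} = e^{2\tau} = -\lambda\alpha_{pq}/|\delta|^2$ comes out the same from either end) shows the intersection point is equidistant from $f_p$ and $f_q$.

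For the final sentence, I would observe that three points of $\mathbb{H}^3$ — here $f_p$, $f_q$, and a second common point of the two geodesics if it existed, or more robustly the two basepoints together with one common direction — always lie in a totally geodesic plane, but the sharper claim is that the two \emph{full} normal geodesics are coplanar even when $\lambda\alpha_{pq} > 0$. I would prove this by exhibiting the common geodesic plane explicitly: after the normalization $E_p = I$, the geodesic $\gamma_p$ lies in the plane $\{X : \langle X, v\rangle = 0\}$ for a suitable fixed $v \in \mathbb{R}^{3,1}$ (the plane spanned by the timelike direction $f_p$ and the spacelike direction $N_p$ is just a $2$-dimensional hyperbolic plane, cut out by two linear conditions); one then checks that $\gamma_q(\sigma)$ satisfies one and the same linear condition for all $\sigma$, i.e. that $U \operatorname{diag}(e^{-\sigma},e^{\sigma}) \bar U^T$ has a fixed zero among its "Minkowski coordinates" regardless of $\sigma$. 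Concretely this amounts to noting that both $\bar E_p^T$-conjugates of $\operatorname{diag}(e^{\pm\tau})$ and $\bar E_q^T$-conjugates of $\operatorname{diag}(e^{\pm\sigma})$ land in the same $2$-dimensional affine subspace of the space of Hermitian matrices — the one containing $I$, $\operatorname{diag}(1,-1)$, and the single off-diagonal "direction" determined by $\delta$ — and any two geodesics in a common totally geodesic $\mathbb{H}^2 \subset \mathbb{H}^3$ lie in a geodesic plane by definition.

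The main obstacle I anticipate is purely computational bookkeeping rather than conceptual: carrying the conjugations $U \operatorname{diag}(\cdot) \bar U^T$ through cleanly, keeping track of the $1/\det U$ and $1/\det E_p$ normalization factors, and correctly translating "equal as points of $\mathbb{H}^3$" (which is an equality in projectivized light cone / an equality of unit-determinant Hermitian matrices) into the scalar equations for $\sigma$ and $\tau$. The sign condition $\lambda\alpha_{pq} < 0$ should fall out transparently from $e^{2\sigma} = -\lambda\alpha_{pq}/|g_q-g_p|^2$ needing a positive right-hand side, and the uniqueness and equidistance are then immediate from the symmetry of that formula; the coplanarity statement requires only that one verify a single linear functional vanishes identically along $\gamma_q$, which is a short check once the normalization is in place.
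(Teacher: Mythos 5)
Your proposal is correct and follows essentially the same route as the paper's proof: normalize $E_p=I$ so that $f_p=I$ and $N_p=\operatorname{diag}(1,-1)$, parametrize both normal geodesics explicitly via conjugated diagonal matrices, read off the intersection condition from the vanishing of the off-diagonal entry (your $e^{2\sigma}=-\lambda\alpha_{pq}/|g_q-g_p|^2$ is exactly the paper's criterion $|\lambda\alpha-|dg|^2|>|\lambda\alpha+|dg|^2|$, and your $\tau=\sigma$ is the paper's $t_1=t_2$ giving equidistance), and obtain coplanarity from the fact that the off-diagonal entries stay on the fixed real line $\mathbb{R}\cdot(g_q-g_p)$ (the paper equivalently rotates $g$ so that $dg_{pq}\in\mathbb{R}$ and uses the plane $\{x_2=0\}$). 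The only differences are cosmetic choices of parametrization ($e^{\pm\sigma}$ versus $\cosh$/$\sinh$).
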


\begin{proof}
Consider one edge $pq$.  Applying an isometry of 
$\mathbb{H}^3$ if necessary, 
we may assume without loss of generality that 
\[ E_p = I \; , \;\; f_{p} = \frac{1}{\det E_p} E_p \bar E_p^T = 
I \; , \;\; N_p = 
\begin{pmatrix} 1 & 0 \\ 0 & -1 \end{pmatrix} \; . \]  
Then, with $dg_{pq}=g_q-g_p$, 
\[ E_q = \begin{pmatrix} 1 & dg_{pq} \\ \frac{\lambda 
\alpha_{pq}}{dg_{pq}} & 1 \end{pmatrix} \; , \;\; 
\det E_q = 1 - \lambda \alpha_{pq} \; . \]
Thus \[ f_{q} = \frac{1}{1-\lambda \alpha_{pq}} E_q \bar E_q^T \;\; 
\text{and} \;\; N_q = \frac{1}{1-\lambda \alpha_{pq}} E_q 
\begin{pmatrix} 1 & 0 \\ 0 & -1 \end{pmatrix} \bar E_q^T \; . \]  
The condition for the two normal geodesics to intersect is that there 
exist reals $t_1$ and $t_2$ so that 
\[ \cosh t_1 \cdot \begin{pmatrix} 1 & 0 \\ 0 & 1 \end{pmatrix} 
+ \sinh t_1 \cdot \begin{pmatrix} 1 & 0 \\ 0 & -1 \end{pmatrix} = 
\frac{\cosh t_2}{1-\lambda \alpha_{pq}} 
E_q \bar E_q^T + \frac{\sinh t_2}{1-\lambda \alpha_{pq}} 
E_q \begin{pmatrix} 1 & 0 \\ 0 & -1 \end{pmatrix} \bar E_q^T \; . \]
In other words (we now abbreviate $dg_{pq}$ to $dg$, and 
$\alpha_{pq}$ to $\alpha$), \[ (1-\lambda \alpha) 
\begin{pmatrix} \cosh t_1 + \sinh t_1 & 0 \\ 0 & 
\cosh t_1 - \sinh t_1 \end{pmatrix} = \cosh t_2 \cdot 
E_q \bar E_q^T + \sinh t_2 \cdot 
E_q \begin{pmatrix} 1 & 0 \\ 0 & -1 \end{pmatrix} \bar E_q^T \]
\[ = \cosh t_2 \cdot \begin{pmatrix} 1+|dg|^2 & 
\frac{\lambda \alpha}{\overline{dg}} + dg \\ 
\frac{\lambda \alpha}{dg} + \overline{dg} & 1 + 
\frac{\lambda^2 \alpha^2}{|dg|^2} 
\end{pmatrix}
+ \sinh t_2 \cdot \begin{pmatrix} 1-|dg|^2 & 
\frac{\lambda \alpha}{\overline{dg}} - dg \\ 
\frac{\lambda \alpha}{dg} - \overline{dg} & 
\frac{\lambda^2 \alpha^2}{|dg|^2} - 1 
\end{pmatrix} \; . \] 
There exists a $t_2$ so that this last sum on the 
right-hand side is a diagonal matrix if and only if 
\[ \left| \frac{\lambda \alpha - |dg|^2}{\lambda \alpha + 
|dg|^2} \right| > 1 \; . \] 
So the normal lines intersect in $\mathbb{H}^3$ if and only if 
\[ | \lambda \alpha - |dg|^2 | > | \lambda \alpha + |dg|^2 | \; . \]  
This is equivalent to 
\[ \lambda \alpha < 0 \; , \] 
and then $t_2$ satisfies 
\[ \sinh(t_2) = \frac{|dg|^2+\lambda \alpha}{\sqrt{-4 \lambda 
\alpha} |dg|} \; , \;\;\; 
\cosh(t_2) = \frac{|dg|^2-\lambda \alpha}{\sqrt{-4 \lambda 
\alpha} |dg|} \; . \]  Now, to get the diagonal terms in the 
above matrix equation to match, we want $t_1$ such that 
\[ \cosh ( t_1 ) = \frac{1}{2(1-\lambda \alpha)} \left[ 
\left(2+|dg|^2+\frac{\lambda^2 \alpha^2}{|dg|^2}\right) \cosh t_2 + 
\left(\frac{\lambda^2 \alpha^2}{|dg|^2}-|dg|^2\right) 
\sinh t_2 \right]\; , \]  
\[ \sinh ( t_1 ) = \frac{1}{2(1-\lambda \alpha)} \left[ 
\left(2-|dg|^2-\frac{\lambda^2 \alpha^2}{|dg|^2}\right) \sinh t_2 + 
\left(|dg|^2-\frac{\lambda^2 \alpha^2}{|dg|^2}\right) 
\cosh t_2 \right]\; . \]  
Such a $t_1$ does exist, and in fact a computation 
shows that this $t_1$ is equal to $t_2$.  

Now the sign of $t_2=t_1$ determines which side of the quadrilateral 
the intersection lies on, and $t_2=t_1 > 0$ if and only if 
$|dg|^2 + \lambda \alpha > 0$.  

Note that because $t_1=t_2$, the distance from the intersection 
point of the normal lines to either of $f_{p}$ and 
$f_{q}$ is the same.  

By a further isometry of $\mathbb{H}^3$ that preserves 
$f_{p}$ and $N_p$, we may 
change $g$ to $e^{i \theta} g$ for some constant $\theta \in \mathbb{R}$.  
Thus without loss of generality we may assume $dg_{pq} \in \mathbb{R}$.  
It is then clear from the above equations that the two 
geodesics emanating in the normal directions from 
$f_{p}$ and $f_{q}$ both lie 
in the geodesic plane $\{ x_2 = 0 \} \cap 
\mathbb{H}^3$ of $\mathbb{H}^3$, with $\mathbb{H}^3$ 
represented as in Section \ref{section3pt1}.  This proves the last 
claim of the lemma.  
\end{proof}

\begin{remark}\label{single-caustic-lemma-but-in-R31}
By an argument similar to that of the proof of Lemma 
\ref{single-caustic-lemma}, but simpler, we also have the 
following statements: 
Let $f$ be a discrete flat surface in $\mathbb{H}^3$ with lift $E$, 
as in \eqref{star15}, constructed 
using the discrete holomorphic function $g$.  Let $\alpha_{pq}$ be 
a cross ratio factorizing function for $g$.  
Let $N_p$ denote the normal as in \eqref{flatsurfnormal2} 
at $f_{p}$.  Then the lines in $\mathbb{R}^{3,1}$ (not $\mathbb{H}^3$) 
emanating from two adjacent vertices $f_{p}$ and 
$f_{q}$ in the directions of $N_p$ and $N_q$ 
(respectively) will either be parallel or will 
intersect at a unique point that is equidistant from $f_{p}$ 
and $f_{q}$.  The distance from either $f_{p}$ or $f_{q}$ 
to that intersection point is 
\[ \left|
\frac{|g_q-g_p|^2+\lambda \alpha_{pq}}{|g_q-g_p|^2-\lambda \alpha_{pq}} 
\right| \; . \] 
This is true on each edge $pq$, 
regardless of the sign of $\lambda \alpha_{pq}$.  
Furthermore, these two lines stemming from $f_{p}$ and $f_{q}$ will 
not be parallel if \[ \lambda \neq \pm 
\frac{|g_q-g_p|^2}{\alpha_{pq}} \; . \]  
\end{remark}

Now, for all that follows, we introduce the following assumption. 

\begin{quote}
{\bf Assumption:} For $g$ the discrete holomorphic map used to construct a 
discrete flat surface, assume that all quadrilaterals in the image 
of $g$ in the complex plane are properly embedded.  
\end{quote}

This implies the following properties:

\begin{enumerate}
\item $dg_{pq}$ is never zero for any edge $\overline{pq}$ between 
     adjacent vertices $p$ and $q$ in the domain $D$.  Note that this 
     was already assumed in Section \ref{sect:2pt3}.  
\item $dg_{pq}$ and $dg_{ps}$ are never parallel for any 
      square of edge-length $1$ with vertices $p,q,r,s$ in $D$.  
   (If $dg_{pq}$ and $dg_{ps}$ 
   are parallel, then $g_p,g_q,g_r,g_s$ all lie in one line, which 
   implies that the interior of the quadrilateral in the complex plane 
   is a half-plane, which is not properly embedded.)
\end{enumerate}

Because the cross ratio for $g$ is always negative, the term 
$\lambda \alpha_{pq}$ is negative on exactly all of the horizontal 
edges in the domain $D \subset \mathbb{Z}^2$ of $f$, or exactly all of the 
vertical edges.  Applying a $90$ degree rotation to the domain if 
necessary, we can, and now do, assume without loss of generality that 
$\lambda \alpha_{pq}$ is negative if and only if the edge $pq$ is 
vertical in $D$.  

Thus every vertical edge $pq$ provides a unique intersection point 
of the normal geodesics, which we denote by $(C_{f})_{pq}$.  The domain 
$D_C$ of this new mesh is now the collection of all vertical edges of $D$.  
If we let the vertical edges be represented by their midpoints, then 
we can consider $C_{f}$ to also be defined on a square grid, but now a 
shifted one, i.e. $D_C \subset \mathbb{Z} \times (\mathbb{Z}+\tfrac{1}{2})$.  

\begin{definition}
We call the discrete surface 
\[ C_{f}=\{(C_{f})_{pq} \, | \, 
pq \; \text{is a vertical edge} \} \] the {\em caustic}, 
or {\em focal surface}, of $f$.  
\end{definition}

This new discrete surface $C_f$ is generally not a discrete flat surface, 
as the vertices in the quadrilaterals of the caustic will generally not 
be concircular.  
However, it has a number of interesting properties closely related to 
discrete flat surfaces, which we will describe.  The first property is 
stated in this lemma: 

\begin{lemma}\label{lem6pt6}
The quadrilaterals of $(C_{f})_{pq}$ lie in geodesic planes of 
$\mathbb{H}^3$.  
\end{lemma}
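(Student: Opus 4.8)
The plan is to reduce the statement to a fact about a single quadrilateral of the caustic, and then to identify a geodesic plane containing its four vertices by working in the $\mathbb{R}^{3,1}$ model of $\mathbb{H}^3$ from Section \ref{section3pt1}. Fix one quadrilateral of $D$ with vertices $p=(m,n)$, $q=(m+1,n)$, $r=(m+1,n+1)$, $s=(m,n+1)$, so that the two vertical edges $\overline{ps}$ and $\overline{qr}$ give rise to the two caustic points $(C_f)_{ps}$ and $(C_f)_{qr}$, while the two caustic points arising from the next quadrilateral over, sharing the edge $\overline{qr}$, together with $(C_f)_{ps}$, form a quadrilateral of $C_f$. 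In fact, for a single quadrilateral of $C_f$ one has four caustic points, each attached to a vertical edge; the cleaner route is to show that the \emph{two} caustic points $(C_f)_{ps}$ and $(C_f)_{qr}$ coming from \emph{adjacent} vertical edges of one face of $D$, together with the shared structure, lie in a common geodesic plane, and that this plane is the one determined by the face.

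First I would recall from the proof of Lemma \ref{single-caustic-lemma} that, after an isometry of $\mathbb{H}^3$ normalizing $E_p=I$, $f_p=I$, $N_p=\operatorname{diag}(1,-1)$, and a further isometry fixing $f_p,N_p$ that rotates $g\mapsto e^{i\theta}g$ so that $dg_{pq}\in\mathbb{R}$, the two normal geodesics from $f_p$ and $f_q$ both lie in the geodesic plane $\{x_2=0\}\cap\mathbb{H}^3$. Hence $(C_f)_{pq}$ — the intersection point, which exists because $\lambda\alpha_{pq}<0$ exactly on the relevant edges — lies in that plane. The key observation is then that the geodesic plane $\{x_2=0\}$ is the plane spanned by $f_p$, $N_p$, and $f_q$ (equivalently, the orthogonal complement in $\mathbb{R}^{3,1}$ of the direction $e_2$), and that by the same normalization $(C_f)_{ps}$ and $(C_f)_{qr}$ for the two adjacent vertical edges of the face $pqrs$ are both forced into a \emph{single} such plane: the point is that all four vertices $f_p,f_q,f_r,f_s$ of a discrete flat quadrilateral are concircular (Theorem \ref{thm:concirc-quads}), hence coplanar in $\mathbb{R}^{3,1}$, so they span (together with the ambient structure) a $3$-dimensional linear subspace of $\mathbb{R}^{3,1}$ whose intersection with $\mathbb{H}^3$ is a geodesic plane.

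The main steps, in order, are: (i) normalize as above so one vertical edge's caustic point sits in $\{x_2=0\}\cap\mathbb{H}^3$; (ii) show the four surface vertices of the face lie in a $3$-plane $\Pi\subset\mathbb{R}^{3,1}$ through the origin (concircularity plus the fact that a circle in $\mathbb{H}^3$ spans a $3$-dimensional Minkowski subspace), and that $\Pi\cap\mathbb{H}^3$ is exactly the geodesic plane in question; (iii) check that the normal vectors $N_p,N_q,N_r,N_s$ at the four vertices also lie in $\Pi$ — this should follow from \eqref{flatsurfnormal2} together with the matrix recursion \eqref{star15}, since each $N$ differs from the corresponding $f$ by the fixed reflection $\operatorname{diag}(1,-1)$ conjugated by $E$, and the $E$'s along the face are related by the explicit matrices $U,V,U_1,V_1$ of the proof of Theorem \ref{thm:concirc-quads}; (iv) conclude that each normal geodesic, being the geodesic through a point of $\Pi\cap\mathbb{H}^3$ in a direction tangent to $\Pi$, stays inside $\Pi\cap\mathbb{H}^3$, so its intersection with an adjacent normal geodesic — i.e. each caustic vertex of the face — lies in $\Pi\cap\mathbb{H}^3$.

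The hard part will be step (iii), verifying that the normals at all four vertices of a face lie in the \emph{same} $3$-plane $\Pi$: concircularity of the $f$'s is already in hand, but the normals are not obviously constrained, and one must use the specific off-diagonal structure of \eqref{star15} — the fact that the $(1,2)$ and $(2,1)$ entries are $dg$ and $\lambda\alpha/dg$, inverse up to the real factor $\lambda\alpha$ — to see that conjugating $\operatorname{diag}(1,-1)$ by the transition matrices produces vectors with no component in the direction ($e_2$, after normalization) transverse to $\Pi$. I expect this to come down to the same kind of computation used to collapse $C$ to a scalar in Theorem \ref{thm:concirc-quads}, namely that $\det(U)(\bar U^T)^{-1}-U$ is a purely off-diagonal matrix, and I would try to phrase step (iii) as a direct corollary of that identity rather than redo it. Once (iii) is secured, (iv) is immediate from the elementary fact that a geodesic of $\mathbb{H}^3$ tangent to a totally geodesic plane lies in that plane, and the lemma follows.
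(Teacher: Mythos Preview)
Your plan has a genuine gap: step (iii) is false, and with it the whole identification of the geodesic plane collapses. The normals $N_p,N_q,N_r,N_s$ do \emph{not} lie in the $3$-plane $\Pi$ spanned by the concircular face $f_p,f_q,f_r,f_s$. In fact the paper later proves exactly the opposite (Proposition \ref{finalprop3}): for each vertex $p$, the normal $N_p$ is never tangent to any quadrilateral of $f$ having $f_p$ as a vertex. So the geodesic plane containing a caustic face is \emph{not} the plane of a face of $f$, and your attempt to force the caustic vertices into $\Pi$ via the identity $\det(U)(\bar U^T)^{-1}-U$ being off-diagonal cannot succeed.

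You have also misidentified which four caustic vertices make up a face of $C_f$. A face of $C_f$ is indexed by a unit square in $D_C\subset\mathbb{Z}\times(\mathbb{Z}+\tfrac12)$; concretely (see the proof of Theorem \ref{finalprop2}) its four vertices are $C_{f,p_{12}},C_{f,q_{12}},C_{f,q_{23}},C_{f,p_{23}}$, coming from the four vertical edges $p_1p_2,\;q_1q_2,\;q_2q_3,\;p_2p_3$ with $p_i=(m,i)$, $q_i=(m{+}1,i)$. The crucial observation --- which is the paper's entire proof --- is that two of these vertices, $C_{f,p_{12}}$ and $C_{f,p_{23}}$, both lie on the single normal geodesic $\ell_{p_2}$ through $f_{p_2}$, while the other two, $C_{f,q_{12}}$ and $C_{f,q_{23}}$, both lie on $\ell_{q_2}$. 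Since $p_2$ and $q_2$ are \emph{horizontally} adjacent, the last sentence of Lemma \ref{single-caustic-lemma} says $\ell_{p_2}$ and $\ell_{q_2}$ lie in a common geodesic plane of $\mathbb{H}^3$. That plane contains all four caustic vertices, and the lemma follows. Note this plane is determined by the horizontal edge $\overline{p_2q_2}$ and the two normals there --- it meets $f$ only along that edge, not along a whole face.
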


\begin{proof}
This is clear from the geometry of the construction, and from the fact that 
adjacent normal geodesics always lie in a common geodesic plane of 
$\mathbb{H}^3$, 
by Lemma \ref{single-caustic-lemma}. 
\end{proof}

\subsection{Discrete extrinsic curvature: first approach}
Now that we have seen the proof of Lemma \ref{single-caustic-lemma}, 
we are able to 
give one geometric justification for why we can say the discrete 
surfaces in Section \ref{section4pt3} are "flat".  We will give 
an argument here showing the "discrete extrinsic curvature" is 
identically $1$.  

For a smooth flat surface given by a frame $E$ solving 
\eqref{E-eqn-for-flat-guys}, we find that the two functions 
\[ k_1 = \frac{-1+|g_x|^2}{1+|g_x|^2} \; , \;\;\; 
   k_2 = \frac{1+|g_y|^2}{-1+|g_y|^2} \;  \] 
give the principal curvatures of the surface, and also give the 
inverses of the distances to the focal points in 
$\mathbb{R}^{3,1}$ of the curves in the 
surface along which either $y$ or $x$ is constant.  Here we 
are considering the focal points found in $\mathbb{R}^{3,1}$ (not 
$\mathbb{H}^3$), but we are finding those focal points 
with respect to the normal directions to the 
surface in $\mathbb{H}^3$ given by the normal vectors 
\eqref{flatsurfnormal}, which are actually tangent vectors to 
$\mathbb{H}^3$ itself.  Since the extrinsic 
curvature $k_1 k_2$ is exactly $1$, if $k_1 \neq k_2$, we have 
$|k_1| < 1$ and $|k_2| > 1$, and we have that 
\begin{equation}\label{artanhguy}
\text{arctanh}\left( k_1 \right) - 
\text{arctanh}\left( k_2^{-1} \right) = 
\text{arctanh}\left(\frac{-1+|g_x|^2}{1+|g_x|^2}\right) - 
\text{arctanh}\left(\frac{-1+|g_y|^2}{1+|g_y|^2}\right) = 0 \; . 
\end{equation} 
This last right-hand equality, of course, 
is clear from the fact that $|g_x|=|g_y|$.  However, the 
right-hand equality 
encodes that the extrinsic curvature is exactly $1$ in a 
way that can be applied to the discrete case, as follows: 
the corresponding equation in the discrete case is given by 
the corresponding summation about the four edges 
(assume $\lambda \alpha_{pq} = \lambda \alpha_{rs} > 0$ and 
$\lambda \alpha_{qr} = \lambda \alpha_{sp} < 0$ -- the other 
case can be handled similarly) 
\begin{equation}\label{artanhguydiscrete}
\text{arctanh}\left(\frac{-\lambda \alpha_{pq}+
|g_q-g_p|^2}{\lambda \alpha_{pq} 
+ |g_q-g_p|^2}\right) - \text{arctanh}\left(\frac{\lambda \alpha_{qr}+
|g_r-g_q|^2}{-\lambda \alpha_{qr} + |g_r-g_q|^2}\right) + \end{equation}\[
\text{arctanh}\left(\frac{-\lambda \alpha_{rs}+
|g_s-g_r|^2}{\lambda \alpha_{rs} 
+ |g_s-g_r|^2}\right) - \text{arctanh}\left(\frac{\lambda \alpha_{sp}+
|g_p-g_s|^2}{-\lambda \alpha_{sp} + |g_p-g_s|^2}\right) = 0 \] 
of each quadrilateral with vertices associated to 
$p,q,r,s$ (given in counterclockwise order about the quadrilateral 
in $D$) in the discrete surface.  
The analogous geometric meaning of 
\[ \frac{- \lambda \alpha_{pq}+|g_q-g_p|^2}{\lambda \alpha_{pq} 
+ |g_q-g_p|^2} \] is preserved in the discrete case, as it is 
the inverse of the (oriented) 
distance from either $f_{p}$ or $f_{q}$ to the intersection point 
of the geodesics in $\mathbb{R}^{3,1}$ stemming off of $f_{p}$ and $f_{q}$ 
in the directions of $N_p$ and $N_q$, respectively (see Remark 
\ref{single-caustic-lemma-but-in-R31}).  Furthermore, 
Equation \eqref{artanhguydiscrete} follows immediately from the 
definition of the cross ratio factorizing function $\alpha$.  
In this sense, we can say that the ``discrete extrinsic curvature'' 
is identically $1$.  

\subsection{Discrete extrinsic curvature: second approach}
We now consider a second approach to discrete extrinsic curvature.  
For a smooth surface of constant extrinsic curvature $1$, the 
infinitesimal ratio of the area of the Gauss map to the area of 
the surface is exactly $1$.  So another way to give a notion 
that the "discrete extrinsic curvature" be identically $1$ for a 
discrete flat surface is to show the analogous property in 
the discrete case.  That is the purpose of the following lemma.  

\begin{lemma}
Let $f$ be a discrete flat surface in $\mathbb{H}^3$ 
with normal map $N$.  Consider the vertices $f_{p},f_{q},f_{r},f_{s}$ 
of one quadrilateral (in $\mathbb{H}^3$) of the 
surface associated with the quadrilateral with vertices $p$, $q$, $r$, 
$s$ (given in counterclockwise order) in $D$.  
These four vertices $f_{p},f_{q},f_{r},f_{s}$ also determine 
another quadrilateral $\mathcal{F}_{f}$, now in $\mathbb{R}^{3,1}$, again 
with vertices $f_{p},f_{q},f_{r},f_{s}$, but now with geodesic 
edges $\overline{f_{p}f_{q}}$, $\overline{f_{q}f_{r}}$, 
$\overline{f_{r}f_{s}}$, $\overline{f_{s}f_{p}}$ in 
$\mathbb{R}^{3,1}$, and which is planar in $\mathbb{R}^{3,1}$.  Likewise, 
the normals $N_p,N_q,N_r,N_s$ determine a planar quadrilateral 
$\mathcal{F}_N$ in 
$\mathbb{R}^{3,1}$ with vertices 
$N_p,N_q,N_r,N_s$ and with geodesic edges $\overline{N_pN_q}$, 
$\overline{N_qN_r}$, $\overline{N_rN_s}$, $\overline{N_sN_p}$ 
in $\mathbb{R}^{3,1}$.  

These two quadrilaterals $\mathcal{F}_{f}$ and $\mathcal{F}_N$ 
lie in parallel spacelike 
planes of $\mathbb{R}^{3,1}$ and have the same area.  
\end{lemma}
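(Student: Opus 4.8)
The plan is to reduce the whole statement to a single explicit relation between the edge vectors of $\mathcal{F}_N$ and those of $\mathcal{F}_f$ inside $\mathbb{R}^{3,1}$, and then to let the area claim fall out of the cross-ratio identity for $g$.

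First, the planarity and spacelikeness of $\mathcal{F}_f$. By Theorem~\ref{thm:concirc-quads} the four distinct points $f_p,f_q,f_r,f_s$ lie on a common circle of $\mathbb{H}^3$, and such a circle spans an affine $2$-plane $\Pi_f$ of $\mathbb{R}^{3,1}$ whose direction space $W$ is spacelike and genuinely $2$-dimensional (a timelike or degenerate $2$-plane would meet $\mathbb{H}^3$ in an unbounded curve, and three distinct points of a circle are never collinear). Hence $\mathcal{F}_f\subset\Pi_f=f_p+W$ with $W=\mathrm{span}\{f_q-f_p,\;f_s-f_p\}$.

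Second, the key identity. On an edge $pq$ write $E_q=E_pU$ with $U=\quadmatrix{1}{dg}{\lambda\alpha_{pq}/dg}{1}$, $dg=g_q-g_p$, so $\det U=1-\lambda\alpha_{pq}$; set $\sigma=\quadmatrix{1}{0}{0}{-1}$, so that $f_p=\frac{1}{\det E_p}E_p\bar E_p^T$ and $N_p=\frac{1}{\det E_p}E_p\sigma\bar E_p^T$. A direct $2\times2$ computation gives $\frac{1}{\det U}U\bar U^T-I=\frac{|dg|^2+\lambda\alpha_{pq}}{1-\lambda\alpha_{pq}}\,M$ and $\frac{1}{\det U}U\sigma\bar U^T-\sigma=\frac{-(|dg|^2-\lambda\alpha_{pq})}{1-\lambda\alpha_{pq}}\,M$, where $M=\quadmatrix{1}{1/\overline{dg}}{1/dg}{\lambda\alpha_{pq}/|dg|^2}$. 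Conjugating these by $\frac{1}{\det E_p}E_p(\,\cdot\,)\bar E_p^T$ exhibits both $f_q-f_p$ and $N_q-N_p$ as real multiples of the single Hermitian matrix $\frac{1}{\det E_p}E_pM\bar E_p^T$, and therefore, for every edge,
\[ N_q-N_p=c_{pq}\,(f_q-f_p),\qquad c_{pq}=\frac{\lambda\alpha_{pq}-|g_q-g_p|^2}{\lambda\alpha_{pq}+|g_q-g_p|^2}\in\mathbb{R}. \]
Thus each edge of $\mathcal{F}_N$ is parallel to the corresponding edge of $\mathcal{F}_f$. Since $f_r-f_p,f_r-f_q\in W$ by coplanarity of $\mathcal{F}_f$, the vectors $N_q-N_p=c_{pq}(f_q-f_p)$, $N_s-N_p=c_{ps}(f_s-f_p)$ and $N_r-N_p=c_{pq}(f_q-f_p)+c_{qr}(f_r-f_q)$ all lie in $W$; hence $\mathcal{F}_N$ lies in the plane $N_p+W$, which is parallel to $\Pi_f$ and so also spacelike. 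This settles planarity and parallelism.

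Third, the equality of areas. Fix a Euclidean area form on the spacelike plane $W$ and let $\times$ denote the resulting scalar cross product on $W$. Put $A=f_q-f_p$, $B=f_r-f_p$, $C=f_s-f_p$, and since $B\in W=\mathrm{span}\{A,C\}$ write $B=\xi A+\eta C$. The shoelace formula gives $\mathrm{area}(\mathcal{F}_f)=\tfrac12|A\times B+B\times C|=\tfrac12|\xi+\eta|\,|A\times C|$, while, using $\overrightarrow{N_pN_q}=c_{pq}A$, $\overrightarrow{N_pN_s}=c_{ps}C$ and $\overrightarrow{N_pN_r}=(c_{pq}-c_{qr})A+c_{qr}B$, the same formula gives $\mathrm{area}(\mathcal{F}_N)=\tfrac12\bigl|c_{pq}c_{qr}\eta+c_{qr}c_{ps}\xi+(c_{pq}-c_{qr})c_{ps}\bigr|\,|A\times C|$. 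Computing $\overrightarrow{N_pN_r}$ also along $p\to s\to r$ and comparing the two expressions forces $\xi=\frac{c_{qr}-c_{pq}}{c_{qr}-c_{sr}}$, $\eta=\frac{c_{ps}-c_{sr}}{c_{qr}-c_{sr}}$; substituting these and simplifying, the identity $\mathrm{area}(\mathcal{F}_N)=\mathrm{area}(\mathcal{F}_f)$ is seen to be equivalent to
\[ (\lambda\alpha_{pq})^2\,|g_r-g_q|^2\,|g_s-g_p|^2=(\lambda\alpha_{ps})^2\,|g_q-g_p|^2\,|g_r-g_s|^2, \]
which is precisely the squared-modulus form of the cross-ratio identity~\eqref{rats2} for $g$ on this quadrilateral. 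This completes the argument.

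The heart of the proof is the identity $N_q-N_p=c_{pq}(f_q-f_p)$ of the second step; once it is available, the parallelism and planarity of $\mathcal{F}_N$ are immediate, and the identity itself is nothing more than the two displayed $2\times2$ matrix computations. The main obstacle is the bookkeeping in the third step: carrying the four scalars $c_{pq},c_{qr},c_{sr},c_{ps}$ through the shoelace computation and recognizing that, after using the compatibility-forced values of $\xi$ and $\eta$, the area identity collapses exactly to the cross-ratio relation. One should also dispose of a few degenerate configurations — an edge of $\mathcal{F}_N$ collapsing (some $c_{pq}=0$, possible only on the edges where $\lambda\alpha_{pq}>0$), the coincidence $c_{qr}=c_{sr}$ (where the formulas for $\xi,\eta$ need a limiting argument), and the non-generic case in which the quadrilateral of $f$ degenerates — all of which are excluded by, or handled directly under, the standing embeddedness assumption on $g$.
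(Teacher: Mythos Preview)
Your proof is correct and takes a genuinely different route from the paper's. You establish the edge-parallel identity $N_q-N_p=c_{pq}(f_q-f_p)$ by a direct $2\times 2$ computation and then run a shoelace argument, reducing the area equality (via $|u-v|=|u+v|\Leftrightarrow uv=0$ for the two scalars you assemble) to $(\lambda\alpha_{pq})^2|dg_{qr}|^2|dg_{ps}|^2=(\lambda\alpha_{ps})^2|dg_{pq}|^2|dg_{sr}|^2$, which is exactly the squared-modulus form of the cross-ratio condition~\eqref{rats2}. The paper instead passes to the light-cone maps $G_1=\tfrac12(f+N)$ and $G_2=\tfrac12(f-N)$, observes that their quadrilaterals carry the same real cross ratio and are therefore Christoffel-dual, so that the mixed area $MA(G_1,G_2)$ vanishes; bilinearity of mixed area then gives $A(f)=A(G_1)+A(G_2)=A(N)$ in one stroke. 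Your approach is more elementary and self-contained---no appeal to the mixed-area machinery of \cite{BS}---at the cost of heavier explicit algebra and the need to dispose of the degenerate configurations you flag; the paper's approach is shorter and more conceptual, and makes visible the role of the two hyperbolic Gauss maps. Incidentally, your edge-parallel identity is sharper than what the paper states explicitly (it only invokes the reflective symmetry from Remark~\ref{single-caustic-lemma-but-in-R31}), and is worth recording on its own.
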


\begin{proof}
Because $f_{p},f_{q},f_{r},f_{s}$ 
lie in a circle $\mathcal{C}$ in $\mathbb{H}^3$, 
there exists a planar quadrilateral $\mathcal{F}_{f}$ in $\mathbb{R}^{3,1}$ 
with edges that are geodesics in $\mathbb{R}^{3,1}$, and with vertices 
$f_{p},f_{q},f_{r},f_{s}$.  

Since $N_p$ and $N_q$ have reflective symmetry with respect 
to the edge of $\mathcal{F}_{f}$ from $f_{p}$ and 
$f_{q}$ (see Remark \ref{single-caustic-lemma-but-in-R31}), and 
since similar symmetry 
holds on the other three edges of $\mathcal{F}_{f}$, we 
know that $f_{p}+N_p,f_{q}+N_q,f_{r}+N_r,f_{s}+N_s$ are 
the vertices of a 
planar quadrilateral $\mathcal{F}_{f+N}$ with geodesic edges in 
$\mathbb{R}^{3,1}$.  
It follows that $N_p,N_q,N_r,N_s$ are then the vertices of a 
planar quadrilateral $\mathcal{F}_N$ with geodesic edges in 
$\mathbb{R}^{3,1}$.  
In fact, $\mathcal{F}_{f}$, $\mathcal{F}_{f+N}$ and $\mathcal{F}_N$ 
all lie in parallel spacelike planes.  

The goal is to show that $\mathcal{F}_{f}$ and $\mathcal{F}_N$ have the 
same area.  Since $\mathcal{F}_{f}$ and $\mathcal{F}_N$ are parallel, 
it is allowable to replace the metric of $\mathbb{R}^{3,1}$ with the 
standard positive-definite Euclidean metric for 
$\mathbb{R}^4$ and simply prove that $\mathcal{F}_{f}$ and $\mathcal{F}_N$ 
have the same area with respect to that metric.  
The advantage of this is that it allows us to use known computational 
methods involving mixed areas.  See \cite{BS}, for example, 
for an explanation of mixed areas.  

Noting that $f \pm N$ lies in the $3$-dimensional light cone of 
$\mathbb{R}^{3,1}$, 
i.e. $\langle f+N,f+N \rangle = \langle f-N,f-N \rangle = 0$, 
we define the lightlike vectors 
\[ G_1 = \tfrac{1}{2} (f+N) \; , \;\;\; G_2 = \tfrac{1}{2} (f-N) \; . \]
(In fact, the normal geodesic at each vertex of $f$ in $\mathbb{H}^3$ 
is asymptotic to the lines in the light cone determined by $G_1$ and 
$G_2$.)  
The two concircular sets $\{ (G_1)_p,(G_1)_q,(G_1)_r,(G_1)_s \}$ 
and $\{ (G_2)_p,(G_2)_q,(G_2)_r,(G_2)_s \}$ have the same real cross ratio, 
so the two quadrilaterals in $\mathbb{R}^{3,1}$ that they determine are either 
congruent or dual to each other.  In fact, they are dual to each other, 
seen by examining the four distances to intersection points amongst the 
normal lines in $\mathbb{R}^{3,1}$ extending from $f_p$, $f_q$, $f_r$ and 
$f_s$ (two of which to one side of $\mathcal{F}_{f}$ are less than 
$1$, and the other two of which to the opposite side are greater 
than $1$, see the proof of Lemma \ref{single-caustic-lemma} and 
also Remark \ref{single-caustic-lemma-but-in-R31}).  
It follows that the mixed area of $G_1$ and $G_2$ is zero \cite{BS}.  
We then have, with ``$A$'' denoting area and ``$MA$'' 
denoting mixed area, 
\[ A(f) = A(G_1+G_2) = A(G_1) + A(G_2) + 2 MA(G_1,G_2) = A(G_1) + A(G_2) \; , 
\]
\[ A(N) = A(G_1-G_2) = A(G_1) + A(G_2) - 2 MA(G_1,G_2) = A(G_1) + A(G_2) \; . 
\]
Hence $A(f)=A(N)$.  
\end{proof}

\begin{remark}
The proof of Lemma \ref{single-caustic-lemma} shows that for a vertical 
edge $\overline{pq}$ (of length $1$) 
of $D$, the geodesic edge $\overline{f_{p}f_{q}}$, 
the geodesic through $f_{p}$ in the direction of $N_p$ and the 
geodesic through $f_{q}$ in the direction of $N_q$ form the boundary 
of a planar equilateral triangle in $\mathbb{H}^3$.  In particular, it follows 
that $f_{p}$, $f_{q}$, $f_{q}^d$ and $f_{p}^d$ are 
concircular, for any value of $d$.  One can also 
show that $f_{p}$, $f_{q}$, $f_{q}^d$ and $f_{p}^d$ are 
concircular even when 
$\overline{pq}$ is a horizontal edge in $D$.  This shows that the 
quadrilaterals formed by the two points of an edge and the two points  
of the corresponding edge of a parallel flat surface are 
always concircular. This  
in turn implies that a quadrilateral of the surface and the corresponding 
quadrilateral on a parallel surface have a total of eight vertices all 
lying on a common sphere - forming a cubical object 
with concircular sides. This gives (see \cite{BMS}) a discrete 
version of a triply orthogonal system, that is, a map from $\mathbb{Z}^3$ or a 
subdomain of $\mathbb{Z}^3$ to $\mathbb{R}^3$ where all quadrilaterals are concircular.
\end{remark}

\subsection{A formula for the caustic}

In Lemma \ref{lem6pt6} we gave one property of caustics that is closely 
related to discrete flat surfaces.  Here we give a second such type of 
property, as seen in Theorem \ref{anotherthm-forcaustic} below.  

The equation for the lift $E$ of $f$ is 
\[ E_p^{-1} E_q = \begin{pmatrix}
1 & dg_{pq} \\ \frac{\lambda \alpha_{pq}}{dg_{pq}} & 1
\end{pmatrix} \; . \]  
However, since the formula \eqref{starstar1} for the surface has a 
mitigating scalar factor $1/\det E$, we can change the 
equation above so that the 
potential matrix has determinant one, without changing the resulting 
surface, so let us instead use: 
\[ \tilde E_p = \frac{1}{\sqrt{\det E_p}} E_p \; , \;\;\; 
f_{p} = \tilde E_p \overline{\tilde E_p}^T \; , \]
\[ \tilde E_p^{-1} 
\tilde E_q = \frac{1}{\sqrt{1-\lambda \alpha_{pq}}} \begin{pmatrix}
1 & dg_{pq} \\ \frac{\lambda \alpha_{pq}}{dg_{pq}} & 1
\end{pmatrix} \; . \]  
We also now assume that $\lambda$ is sufficiently close to zero so that 
\[ | \lambda \alpha_{pq} | < 1 \]  
for all edges $pq$.  

We now define, for each vertical edge $pq$, 
\begin{equation}\label{6pt1discretized} 
E_{(C_{f})_{pq}} = (a \tilde E_p+b \tilde E_q) \cdot 
\begin{pmatrix}
\frac{\sqrt{dg_{pq}}}{\sqrt[4]{\lambda \alpha_{pq}}} & 0 
\\ 0 & \frac{\sqrt[4]{\lambda \alpha_{pq}}}{\sqrt{dg_{pq}}} 
\end{pmatrix} \cdot P \; , \end{equation} 
where $a$ and $b$ are any choice of 
nonnegative reals such that $a+b=1$ (recall the definition of 
$P$ in \eqref{star29}).  This is a natural 
discretization of the $E_{C_{f}}$ for the case of smooth surfaces 
(see \eqref{star29}), where we 
now must take a weighted average of $E_p$ and $E_q$, and we allow 
any choice of weighting $(a,b)$.  
For the smooth case in Equation \eqref{E-eqn-for-flat-guys}, 
the fourth root of the upper right 
term of $E^{-1}dE$ divided by the lower left term gives the 
$\sqrt{g^\prime}$ appearing in Equation \eqref{star29}.  
For the discrete case in Equation 
\eqref{star15}, the fourth root of the upper right 
term of $E_p^{-1}(E_q-E_p)$ divided by the lower left term gives the 
$\sqrt{dg_{pq}}/\sqrt[4]{\lambda \alpha_{pq}}$ appearing in 
Equation \eqref{6pt1discretized} here.  This explains why we insert the 
$\sqrt[4]{\lambda \alpha_{pq}}$ factors here.  Note that $\sqrt[4]{\lambda 
\alpha_{pq}}$ is not real, because $\lambda \alpha_{pq} < 0$.  

\begin{theorem}\label{anotherthm-forcaustic}
The formula 
\begin{equation}\label{star38} 
C_{f} = \frac{1}{\det (E_{C_{f}})} E_{C_{f}} \cdot 
\overline{E_{C_{f}}}^T 
\end{equation} 
for the discrete caustic holds for all vertical edges $pq$, 
and this formula does not depend on the choice of $a$ and $b=1-a$.  
\end{theorem}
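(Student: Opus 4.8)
The plan is to verify directly that the expression in \eqref{6pt1discretized} for $E_{(C_f)_{pq}}$, when plugged into \eqref{star38}, produces the point on the normal geodesic through $f_p$ (equivalently through $f_q$) at the distance $t_1 = t_2$ computed in the proof of Lemma \ref{single-caustic-lemma}, and that this point is the same regardless of how the weight $(a,b)$ with $a+b=1$ is chosen. Since both the intersection point of the normal geodesics and the formula \eqref{star38} are invariant under isometries of $\mathbb{H}^3$, I would first normalize exactly as in the proof of Lemma \ref{single-caustic-lemma}: apply an isometry so that $E_p = I$, $f_p = I$, $N_p = \mathrm{diag}(1,-1)$, and a further isometry (changing $g$ to $e^{i\theta}g$) so that $dg_{pq} \in \mathbb{R}_{>0}$; also write $\alpha = \alpha_{pq}$, $dg = dg_{pq}$, and recall $\lambda\alpha < 0$ on vertical edges. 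In this normalization $\tilde E_p = I$ and $\tilde E_q = \frac{1}{\sqrt{1-\lambda\alpha}}\left(\begin{smallmatrix}1 & dg\\ \lambda\alpha/dg & 1\end{smallmatrix}\right)$, so $a\tilde E_p + b\tilde E_q$ is an explicit $2\times 2$ matrix with real entries.

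The key computational steps are then: (i) form $M := (a\tilde E_p + b\tilde E_q)\cdot\mathrm{diag}\!\left(\tfrac{\sqrt{dg}}{\sqrt[4]{\lambda\alpha}},\tfrac{\sqrt[4]{\lambda\alpha}}{\sqrt{dg}}\right)\cdot P$ and compute $\det M$ and $M\bar M^T$; (ii) observe that the diagonal scaling matrix has determinant $1$ and $P \in \mathrm{SU}_2$ (indeed $P\bar P^T = I$ and $\det P = 1$), so $\det(E_{(C_f)_{pq}}) = \det(a\tilde E_p + b\tilde E_q)$ and, crucially, $M\bar M^T = (a\tilde E_p + b\tilde E_q)\cdot S\cdot (\overline{a\tilde E_p + b\tilde E_q})^T$ where $S = \mathrm{diag}(\cdots)\,P\,\bar P^T\,\overline{\mathrm{diag}(\cdots)}^T = \mathrm{diag}\!\left(\tfrac{dg}{\sqrt{\lambda\alpha}},\tfrac{\sqrt{\lambda\alpha}}{dg}\right)$ — wait, more carefully, since $\overline{\sqrt[4]{\lambda\alpha}}\ne\sqrt[4]{\lambda\alpha}$, one gets $S = \mathrm{diag}\!\left(\tfrac{dg}{|\sqrt{\lambda\alpha}|}\cdot(\text{phase}),\ldots\right)$; the point is that $P$ conjugates the diagonal anti-real scaling into a manageable fixed matrix. (iii) Carry out the resulting $2\times 2$ multiplication to get $\frac{1}{\det(E_{(C_f)_{pq}})}M\bar M^T$ as an explicit matrix, and compare it with $\cosh t_1\cdot I + \sinh t_1\cdot N_p = \mathrm{diag}(e^{t_1}, e^{-t_1})$ using the values of $\cosh t_1, \sinh t_1$ from Lemma \ref{single-caustic-lemma}. (iv) Finally, check that the dependence on $a, b$ drops out: since $a+b=1$, the entries of $\frac{1}{\det}M\bar M^T$ — which a priori are ratios of quadratics in $a,b$ — should simplify (using $\lambda\alpha<0$ and $\det\tilde E_q=1$ so that $\det(a\tilde E_p+b\tilde E_q)$ expands cleanly) to an expression free of $a$ and $b$.

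I expect the main obstacle to be step (iv), the $(a,b)$-independence, together with the bookkeeping of the non-real fourth root $\sqrt[4]{\lambda\alpha}$: one must track its phase carefully through the conjugation by $P$ so that the final matrix $M\bar M^T$ comes out real and diagonal (after the normalization $dg\in\mathbb{R}$). The cleanest way I see to handle the $(a,b)$-independence is to note that for any distinct $a,b$ with $a+b=1$ the matrix $a\tilde E_p + b\tilde E_q$ differs from $\tilde E_p$ (the $a=1$ case) by right multiplication by a matrix that, after the diagonal scaling and $P$, turns out to fix the relevant point — or, more concretely, to simply compute the result for the one-parameter family directly and watch the $a$'s cancel; since $\tilde E_q - \tilde E_p$ has rank one with a very specific structure (it is $\tilde E_p$ times a nilpotent-up-to-scalar matrix by \eqref{star15}), the computation collapses. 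Once the normalized computation matches $\mathrm{diag}(e^{t_1},e^{-t_1})$ with $t_1$ as in Lemma \ref{single-caustic-lemma} and is seen to be independent of $(a,b)$, the theorem follows by the isometry-invariance of both sides.
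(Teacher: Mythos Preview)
Your approach is correct and is essentially the same as the paper's: both compute $E_{C_f}\overline{E_{C_f}}^{\,T}$ directly and then appeal to Lemma \ref{single-caustic-lemma} to identify the result with the caustic vertex.

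The execution in the paper is considerably cleaner, however, and it is worth seeing why. Rather than normalizing $E_p=I$ and $dg\in\mathbb{R}$, the paper leaves $\tilde E_p$ general and obtains in one line
\[
(E_{C_f})_{pq}\,\overline{(E_{C_f})_{pq}}^{\,T}
   \;=\; S\cdot \tilde E_p
   \begin{pmatrix} \dfrac{|dg_{pq}|}{\sqrt{-\lambda\alpha_{pq}}} & 0 \\[4pt] 0 & \dfrac{\sqrt{-\lambda\alpha_{pq}}}{|dg_{pq}|}\end{pmatrix}
   \overline{\tilde E_p}^{\,T},
   \qquad
   S=1+2ab\,\frac{1-\sqrt{1-\lambda\alpha_{pq}}}{\sqrt{1-\lambda\alpha_{pq}}}\in\mathbb{R}.
\]
So the entire $(a,b)$-dependence sits in the single real scalar $S$, which the $1/\det$ in \eqref{star38} kills automatically. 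What you flag as the ``main obstacle'' (step (iv)) is therefore not a delicate cancellation at all but a structural fact: the weighted sum $a\tilde E_p+b\tilde E_q$, after being sandwiched with the diagonal matrix $D=\mathrm{diag}(|dg|/\sqrt{-\lambda\alpha},\sqrt{-\lambda\alpha}/|dg|)$ and its conjugate transpose, gives exactly a scalar multiple of $\tilde E_p D\overline{\tilde E_p}^{\,T}$. Your own computation in step (ii) already reduces $M\bar M^T$ to $(a\tilde E_p+b\tilde E_q)\,D\,\overline{(a\tilde E_p+b\tilde E_q)}^{\,T}$ (since $P\bar P^T=I$ and $B\bar B^T=D$); from there one more direct multiplication gives the scalar $S$ above, with no need for the speculative strategies you list. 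Finally, the extra normalization $dg\in\mathbb{R}$ is unnecessary and slightly risky (you would have to check it is compatible with the specific formula \eqref{6pt1discretized}); the paper avoids it and the absolute value $|dg_{pq}|$ appears naturally.
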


\begin{proof}
A computation gives 
\[ (E_{C_{f}})_{pq} \overline{(E_{C_{f}})_{pq}}^T = S \cdot \tilde E_p 
\begin{pmatrix} \frac{|dg_{pq}|}{\sqrt{-\lambda \alpha_{pq}}} & 0 \\ 0 & 
\frac{\sqrt{-\lambda \alpha_{pq}}}{|dg_{pq}|} \end{pmatrix} 
\bar{\tilde{E}}_p^T = \frac{S}{\det E_p} \cdot 
E_p \begin{pmatrix} \frac{|dg_{pq}|}{\sqrt{-\lambda \alpha_{pq}}} & 0 \\ 0 & 
\frac{\sqrt{-\lambda \alpha_{pq}}}{|dg_{pq}|} \end{pmatrix} \bar E_p^T 
\; , \]\[ S = 1+2 a b \frac{1-\sqrt{1-\lambda 
\alpha_{pq}}}{\sqrt{1-\lambda \alpha_{pq}}} \; . \]  
The scalar factor $S$ is the only part of 
$(E_{C_{f}})_{pq} \overline{(E_{C_{f}})_{pq}}^T$ that 
depends on $a$ and $b$, but this scalar factor is irrelevant in the 
formula \eqref{star38}, so we see independence from the choice of $a$ 
and $b$.  The result now follows from the proof of 
Lemma \ref{single-caustic-lemma}.  
\end{proof}

\begin{remark}
However, the choice of normal direction at the vertices of $C_f$ does 
depend on the choice of $a$ and $b$, as the following equation shows: 
\begin{equation}\label{rats1} 
N_{pq} = (\tilde E_{C_{f}})_{pq} \begin{pmatrix} 1 & 0 \\ 0 & -1 
\end{pmatrix} \overline{(\tilde E_{C_{f}})_{pq}}^T = \tilde E_p 
\Omega \bar{\tilde{E}}_p^T \; , \end{equation} where $\Omega$ is 
\[ \begin{pmatrix} 
\frac{2b}{\sqrt{1-\lambda \alpha_{pq}}} (a+ 
\frac{b}{\sqrt{1-\lambda \alpha_{pq}}}) |dg_{pq}| & 
\left( \left(a+\frac{b}{\sqrt{1-\lambda \alpha_{pq}}} \right)^2 + 
  \frac{b^2}{1-\lambda \alpha_{pq}} \lambda \alpha_{pq} \right)
\frac{\sqrt{dg_{pq}}}{\overline{\sqrt{dg_{pq}}}}
\\ 
\left( \left(a+\frac{b}{\sqrt{1-\lambda \alpha_{pq}}} \right)^2 + 
  \frac{b^2}{1-\lambda \alpha_{pq}} \lambda \alpha_{pq} \right)
\frac{\overline{\sqrt{dg_{pq}}}}{\sqrt{dg_{pq}}}
& 
\frac{2b}{\sqrt{1-\lambda \alpha_{pq}}} (a+ 
\frac{b}{\sqrt{1-\lambda \alpha_{pq}}}) \frac{\lambda 
\alpha_{pq}}{|dg_{pq}|} 
\end{pmatrix} \; . \] 
Equation \eqref{rats1} implies that, 
for both horizontal and vertical edges, the 
adjacent normal geodesics of the caustic typically do not 
intersect, for any generic choice of $a$ and $b$.  
\end{remark}

\section{Singularities of discrete flat surfaces}
\label{section6}

The purpose of the next results is to show that the 
discrete caustics in Section \ref{section5} have properties 
similar to the caustics in the smooth case.  

In what follows, we will regard both the discrete flat surface $f$ 
and its caustic $C_f$ as discrete surfaces that have edges and faces in 
$\mathbb{H}^3$ (not just vertices).  
Since there is a unique geodesic line segment between 
any two points in $\mathbb{H}^3$, the edge between any two adjacent 
vertices of $f$ is uniquely determined.  The same is true of $C_f$.  
Then, since the image of the four vertices of any given fundamental 
quadrilateral in $D$ (resp. in $D_C$) under $f$ (resp. 
$C_f$) has image lying in a single geodesic plane (see Theorem 
\ref{thm:concirc-quads} and Lemma \ref{lem6pt6}), 
the image of the fundamental quadrilateral in $D$ (resp. $D_C$) 
can be regarded as a quadrilateral in a geodesic plane of 
$\mathbb{H}^3$ 
bounded by four edges of $f$ (resp. $C_f$).  This is the setting 
for the results given in this section.  

In the case of a smooth flat surface (front) $f$ in 
$\mathbb{H}^3$ and its smooth 
flat caustic $C_{f}$, every point in $C_{f}$ is a point in the singular 
set of one of the parallel flat surfaces of $f$.  
In Lemma \ref{finalprop1}, we are stating that every point in the 
discrete caustic $C_{f}$ of a discrete flat surface $f$ 
is a point in the edge set of some parallel flat surface of $f$.  
This, in conjuction with Theorem \ref{finalprop2}, suggests 
a natural candidate for the definition of the singular set of a 
discrete flat surface.  The proof of Lemma \ref{finalprop1} is 
immediate from the definitions of parallel surfaces and caustics.  

\begin{lemma}\label{finalprop1}
Let $f$ be a discrete 
flat surface defined on a domain $D \subseteq \mathbb{Z}^2$ 
determined by a discrete holomorphic 
function $g : D \to \mathbb{C}$ with properly 
embedded quadrilaterals, and let $C_{f}$ be its caustic.    
Let $P \in \mathbb{H}^3$ be 
any point in $C_{f}$, so $P$ lies in the quadrilateral 
$\mathcal{F}$ of $C_{f}$ that is determined by two 
adjacent vertices $f_{p}$, $f_{q}$ of $f$ and the normal geodesics 
(which contain two opposite edges of $\mathcal{F}$) in 
the directions $N_p$, $N_q$ at $f_{p}$, $f_{q}$, respectively.  
(Thus $\overline{pq}$ will be a horizontal edge of $D$.)  
$P$ can lie in either the interior of $\mathcal{F}$, or an 
edge of $\mathcal{F}$, or could be a vertex of $\mathcal{F}$. 

Then $P$ lies in the edge $\overline{f_{p}^d f_{q}^d}$ of 
some parallel surface $f^d$ of $f$.  
\end{lemma}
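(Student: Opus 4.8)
The plan is to work inside the hyperbolic plane $\Pi\subset\mathbb{H}^{3}$ that carries the caustic quadrilateral $\mathcal{F}$, and to show that $\mathcal{F}$ lies inside the region of $\Pi$ swept out by the edges $\overline{f_{p}^{d}f_{q}^{d}}$ of the parallel flat surfaces $f^{d}$; every point $P\in\mathcal{F}$ then automatically lies on one such edge.

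\emph{Reduction to $\Pi$.} By Lemma \ref{lem6pt6}, $\mathcal{F}$ lies in a geodesic plane $\Pi$. Two opposite edges of $\mathcal{F}$ are segments of the normal geodesics $\ell_{p}$, $\ell_{q}$ of $f$ at $f_{p}$, $f_{q}$, so both of these full geodesics lie in $\Pi$; and because $\overline{pq}$ is a horizontal edge (so $\lambda\alpha_{pq}>0$), Lemma \ref{single-caustic-lemma} shows $\ell_{p}$ and $\ell_{q}$ are disjoint geodesics in $\Pi\cong\mathbb{H}^{2}$. By Remark \ref{rem-parallelsurfaces} the normal geodesic of $f^{d}$ at $f_{p}^{d}$ is again $\ell_{p}$, and since $f_{p}^{d}=\cosh(\log d)\,f_{p}-\sinh(\log d)\,N_{p}$, the map $d\mapsto f_{p}^{d}$ is a bijection of $\mathbb{R}^{+}$ onto $\ell_{p}$ (and likewise $d\mapsto f_{q}^{d}$ onto $\ell_{q}$). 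Hence each edge $\overline{f_{p}^{d}f_{q}^{d}}$ of $f^{d}$ is a geodesic segment of $\Pi$ joining a point of $\ell_{p}$ to a point of $\ell_{q}$, and the entire problem takes place inside $\Pi$.

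\emph{The swept region.} Let $R=\bigcup_{d>0}\overline{f_{p}^{d}f_{q}^{d}}\subset\Pi$, and let $\sigma\subset\Pi$ be the perpendicular bisector of the segment $\overline{f_{p}f_{q}}$. The reflective symmetry of $N_{p}$ and $N_{q}$ across $\sigma$ used in Remark \ref{single-caustic-lemma-but-in-R31} amounts to saying that the reflection $\rho$ of $\Pi$ in $\sigma$ satisfies $\rho(f_{p})=f_{q}$ and $\rho(N_{p})=N_{q}$, whence $\rho(f_{p}^{d})=f_{q}^{d}$ for all $d$ and $\rho(\ell_{p})=\ell_{q}$. Thus $\rho$ maps each ``rung'' $\overline{f_{p}^{d}f_{q}^{d}}$ to itself, so each rung crosses $\sigma$ orthogonally at its midpoint; equivalently, each rung is the segment, truncated between $\ell_{p}$ and $\ell_{q}$, of the geodesic perpendicular to $\sigma$ at the foot of the perpendicular from $f_{p}^{d}$. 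As the nearest-point projection $\ell_{p}\to\sigma$ is a homeomorphism onto an open subinterval of $\sigma$, these rungs foliate $R$, and $R$ is the intersection of the two half-planes ``between $\ell_{p}$ and $\ell_{q}$'' with the (at most two) half-planes bounded by the perpendiculars to $\sigma$ over the endpoints of that subinterval; hence $R$ is geodesically convex. Moreover every point of $\ell_{p}$ is $f_{p}^{d}$ for some $d$, hence an endpoint of a rung, so $\ell_{p}\subseteq R$, and similarly $\ell_{q}\subseteq R$.

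\emph{Conclusion, and the main obstacle.} The four vertices of $\mathcal{F}$ lie on $\ell_{p}\cup\ell_{q}\subseteq R$, and $\mathcal{F}$ is contained in the geodesic convex hull of its vertices, so by convexity of $R$ we get $\mathcal{F}\subseteq R$. Therefore $P\in\mathcal{F}\subseteq R=\bigcup_{d>0}\overline{f_{p}^{d}f_{q}^{d}}$, so $P\in\overline{f_{p}^{d}f_{q}^{d}}$ for some $d>0$, which is an edge of the parallel flat surface $f^{d}$, as claimed. The one step that requires genuine care is showing $R$ is convex: a priori the parallel-surface edges joining $\ell_{p}$ to $\ell_{q}$ could ``twist'' and sweep a non-convex region. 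The reflective symmetry of the normals is exactly what prevents this, by forcing every such edge to be perpendicular to the single fixed geodesic $\sigma$. One should also check the degenerate configurations ($\ell_{p}$ and $\ell_{q}$ asymptotic rather than ultraparallel, or $\mathcal{F}$ degenerating to a triangle as in the $z^{4/3}$ example), but these only simplify the argument.
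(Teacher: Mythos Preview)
Your argument is correct, and it is considerably more detailed than the paper's own treatment: the paper gives no proof at all beyond the sentence ``the proof is immediate from the definitions of parallel surfaces and caustics.''  So there is no real ``approach'' in the paper to compare against; you have supplied a genuine argument where the authors simply asserted the result.

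Your route via the reflection symmetry is a clean way to make the assertion rigorous.  The main point, which the paper takes for granted, is that the chords $\overline{f_p^d f_q^d}$ actually cover the part of the strip between $\ell_p$ and $\ell_q$ that contains $\mathcal{F}$; your observation that every such chord meets the perpendicular bisector $\sigma$ orthogonally (so that the family is a foliation by parallels to~$\sigma$, truncated by $\ell_p$ and $\ell_q$) settles this nicely and yields convexity of the swept region.

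One small remark on justification: the reflection property you use is not exactly the content of Remark~\ref{single-caustic-lemma-but-in-R31}, which is a statement about straight lines in $\mathbb{R}^{3,1}$ rather than about an isometry of $\mathbb{H}^3$.  What you need is that the linear reflection of $\mathbb{R}^{3,1}$ across the hyperplane $\{v:\langle v,\,f_p-f_q\rangle=0\}$ sends $(f_p,N_p)$ to $(f_q,N_q)$; a short computation (using $N_q-N_p=\tfrac{a-b}{a+b}(f_p-f_q)$ with $a=dg_{pq}$, $b=\lambda\alpha_{pq}/dg_{pq}$) confirms this, and since that reflection preserves the span of $f_p,f_q,N_p,N_q$ it restricts to your reflection $\rho$ of $\Pi$.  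With that in hand your convexity and containment steps go through as written.
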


The next proposition will be used in the proof of Theorem 
\ref{finalprop2}.  

\begin{proposition}\label{finalprop3}
Let $f$ be a discrete flat surface with normal $N$ produced from a 
discrete holomorphic function $g$ with properly embedded quadrilaterals.  
Then for all vertices $p$, $N_p$ is not tangent to any quadrilateral 
of $f$ having vertex $f_{p}$.  
\end{proposition}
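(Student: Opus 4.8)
The plan is to reduce the statement to a simple non-vanishing condition on the discrete Weierstrass data, using the concircularity of Theorem \ref{thm:concirc-quads} together with the normalization trick from the proof of Lemma \ref{single-caustic-lemma}. First I would fix a vertex $p$ and a (nondegenerate) quadrilateral of $f$ with vertex $f_p$, corresponding to a fundamental square $p,q,r,s$ in $D$. By Theorem \ref{thm:concirc-quads} the points $f_p,f_q,f_r,f_s$ are concircular, hence span a $3$-dimensional Lorentzian subspace $W \subset \mathbb{R}^{3,1}$, and the quadrilateral lies in the geodesic plane $W \cap \mathbb{H}^3$. Since the tangent plane of that geodesic plane at $f_p$ is $W \cap f_p^{\perp}$, and since $N_p$ already lies in $T_{f_p}\mathbb{H}^3 = f_p^{\perp}$, the crucial observation is that $N_p$ is tangent to the quadrilateral if and only if $N_p \in W = \operatorname{span}_{\mathbb{R}}(f_p,f_q,f_s)$, i.e. if and only if $\det(f_p,f_q,f_s,N_p) = 0$ as vectors in $\mathbb{R}^{3,1}=\mathbb{R}^4$.

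Next I would apply an isometry of $\mathbb{H}^3$, exactly as in the proof of Lemma \ref{single-caustic-lemma}, so that $E_p = I$; then $f_p$ and $N_p$ become the constant vectors $(1,0,0,0)$ and $(0,0,0,1)$, and from Equation \eqref{star15} one reads the $x_1,x_2$ coordinates of $f_q$ and $f_s$ off the off-diagonal entry of $E_q\bar E_q^T/\det E_q$, namely $(f_q)_1 + i(f_q)_2 = (1-\lambda\alpha_{pq})^{-1}\bigl(dg_{pq} + \lambda\alpha_{pq}/\overline{dg_{pq}}\bigr)$ and similarly for $s$. The $4\times 4$ determinant then collapses (expanding along the first and last rows) to the $2\times 2$ determinant in these two coordinates, so $N_p$ is tangent precisely when the complex numbers $(f_q)_1+i(f_q)_2$ and $(f_s)_1+i(f_s)_2$ are real multiples of one another.

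Finally I would perform the short algebraic simplification
$\bigl(dg_{pq}+\tfrac{\lambda\alpha_{pq}}{\overline{dg_{pq}}}\bigr)\overline{\bigl(dg_{ps}+\tfrac{\lambda\alpha_{ps}}{\overline{dg_{ps}}}\bigr)} = \dfrac{(|dg_{pq}|^2+\lambda\alpha_{pq})(|dg_{ps}|^2+\lambda\alpha_{ps})}{\overline{dg_{pq}}\,dg_{ps}}$,
and argue that the numerator is a nonzero real: the cross-ratio condition \eqref{rats2} forces $\lambda\alpha_{pq}$ and $\lambda\alpha_{ps}$ to have opposite signs (one edge at the corner $p$ is ``vertical'', one ``horizontal''), so one factor is automatically positive; and if the other factor vanished, a direct computation with $E_q\bar E_q^T$ would force $f_p=f_q$ or $f_p=f_s$, contradicting nondegeneracy of the quadrilateral. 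Hence the quantity above is real if and only if $\overline{dg_{pq}}\,dg_{ps}\in\mathbb{R}$, i.e. if and only if $dg_{pq}$ and $dg_{ps}$ are parallel — which is excluded by property (2) following the Assumption of Section \ref{section5}. Therefore $\det(f_p,f_q,f_s,N_p)\neq 0$ and $N_p$ is not tangent.

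I expect the main obstacle to be the bookkeeping in the first step: carefully justifying the equivalence ``$N_p$ tangent to the quadrilateral'' $\Leftrightarrow$ ``$N_p\in W$'', that $f_p,f_q,f_s$ genuinely span the relevant Lorentzian $3$-space (which is where nondegeneracy of the $f$-quadrilateral enters and should be confirmed to be legitimately available under the properly embedded hypothesis on $g$), and translating everything cleanly into the $\mathbb{R}^{3,1}$ coordinates used in Section \ref{section3pt1}. Once that framework is in place, the remaining algebra is entirely routine.
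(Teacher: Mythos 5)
Your proof is correct and follows essentially the same route as the paper's: after normalizing $E_p=I$, both arguments reduce non-tangency to the fact that the $(x_1,x_2)$-projections of $f_q$ and $f_s$, namely the complex numbers $(|dg_{pq}|^2+\lambda\alpha_{pq})/\overline{dg_{pq}}$ and $(|dg_{ps}|^2+\lambda\alpha_{ps})/\overline{dg_{ps}}$, are not real multiples of one another, which follows from the non-parallelism of $dg_{pq}$ and $dg_{ps}$ guaranteed by the properly-embedded hypothesis together with the sign of $\lambda\alpha$ on the two edge types. The only difference is presentational (you phrase it as a $4\times4$ determinant while the paper rotates $g$ so that $dg_{pq}\in\mathbb{R}$ and argues with the hyperplane $\{x_2=0\}$), and your explicit treatment of the degenerate case $|dg_{ps}|^2+\lambda\alpha_{ps}=0$ is in fact slightly more careful than the paper's.
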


\begin{proof}
Take a quadrilateral with vertices $f_{p}$, $f_{q}$, 
$f_{r}$ and $f_{s}$ 
of $f$ so that $\overline{pq}$ is a horizontal edge of $D$.  
We may make all of the 
assumptions in the proof of Lemma \ref{single-caustic-lemma}, 
including the assumption that $dg_{pq}$ is real.  
Of course, the normal $N_p = \text{diag}(1,-1)$ lies 
in the hyperplane $\{ x_2 = 0 \}$ of $\mathbb{R}^{3,1}$ (here we 
regard points of $\mathbb{R}^{3,1}$ as Hermitean matrices as in 
Section \ref{section3pt1}), and so does the point
$f_{q}$, since $dg_{pq} \in \mathbb{R}$.  Furthermore, 
$\overline{pq}$ is a horizontal 
edge of $D$, so $\lambda \alpha_{pq} > 0$, which implies that $f_{q}$ 
does not lie in the geodesic in 
$\mathbb{H}^3$ containing $f_{p}$ and tangent to 
$N_p$.  However, $g$ has properly embedded 
quadrilaterals, so both $dg_{ps}$ and $\tfrac{\lambda \alpha_{ps}}{dg_{ps}} 
+ \overline{dg_{ps}}$ will not lie in $\mathbb{R}$, and thus $f_{s}$ will 
not lie in the hyperplane $\{ x_2 = 0 \}$.  It follows that $N_p$ will 
not be parallel to the geodesic plane in $\mathbb{H}^3$ containing the 
two geodesics from $f_{p}$ to $f_{q}$ and from $f_{p}$ 
to $f_{s}$.  
\end{proof}

For a smooth flat surface (front) $f$ and its 
caustic $C_{f}$, a parallel flat surface to $f$, 
including $f$ itself, 
will meet $C_{f}$ along its singular set, and that singular set is 
generally a graph in the combinatorial sense (whose edges consist 
of immersable curves).  Furthermore, 
all vertices of that combinatorial 
graph have valence at least two.  For example, 
cuspidal edges form the edges of this graph, and swallowtails give 
vertices of this graph with valence two.  In particular, no 
cuspidal edge can simply stop at some point without continuing 
on to at least one other cuspidal edge (as this would give a 
vertex of valence one).  The following 
theorem shows that an analogous property holds in the discrete case.
Note that when we are speaking of the vertices of this combinatorial 
graph in the theorem below, these vertices are {\em not} the same 
as the vertices of the discrete flat surface, nor its discrete 
caustic, in general.  

\begin{theorem}\label{finalprop2}
Let $f$ be a discrete 
flat surface defined on a domain $D \subseteq \mathbb{Z}^2$ 
determined by a discrete holomorphic function $g : D \to \mathbb{C}$ with 
properly embedded quadrilaterals, and let $C_{f}$ be 
its caustic.  Let $f^d$ be a parallel surface, and let $S_d$ be 
the set of all $P \in \mathbb{H}^3$ as in Lemma 
\ref{finalprop1}, for that value of $d$, and for any adjacent 
endpoints $p$ and $q$ of a horizontal edge of $D$.  
Assume that no two adjacent vertices of $f^d$ are ever equal, and 
that the faces of $C_{f}$ are embedded.  

Then $S_d$ is a graph (in the combinatorial sense) with edges composed of 
geodesic segments lying in the image in 
$\mathbb{H}^3$ of the horizontal edges of $D$ 
under $f^d$, and with all vertices of $S_d$ having valence at least two.  
\end{theorem}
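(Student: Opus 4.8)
The first assertion follows at once from the constructions, so the plan is to reduce everything to the claim about valences and then work hard on that. For each horizontal edge $\overline{pq}$ of $D$ (so $\lambda\alpha_{pq}>0$), Remark~\ref{rem-parallelsurfaces} shows that the normal geodesic at $f_p^d$ coincides with the one at $f_p$, so by Lemma~\ref{single-caustic-lemma} the geodesic segment $\overline{f_p^d f_q^d}$ lies in the geodesic plane $\Pi_{\overline{pq}}$ spanned by the normal geodesics at $f_p$ and $f_q$ — the very plane that, by Lemma~\ref{lem6pt6}, contains the caustic quadrilateral $\mathcal{F}_{\overline{pq}}$ of $C_f$ determined by $\overline{pq}$. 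Setting $\sigma_{\overline{pq}}^d:=\overline{f_p^d f_q^d}\cap\mathcal{F}_{\overline{pq}}$, which is a union of at most two geodesic segments (possibly empty) because $\mathcal{F}_{\overline{pq}}$ is an embedded quadrilateral, one has $S_d=\bigcup_{\overline{pq}}\sigma_{\overline{pq}}^d$, a union of geodesic segments lying in the images of the horizontal edges of $D$ under $f^d$; these segments and their endpoints constitute the graph $S_d$. So the real content is: every endpoint of every $\sigma_{\overline{pq}}^d$ has valence at least two.

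First I would record the combinatorial structure of $C_f$: the caustic quadrilaterals $\mathcal{F}_{\overline{pq}}$, indexed by the horizontal edges of $D$, are glued edge to edge, the two of them coming from the horizontal edges of $D$ incident to a vertex $p$ sharing the edge $e_p$ that lies along the normal geodesic at $f_p$ (the segment between the two caustic vertices $(C_f)_{pq}$ at the vertical edges through $p$), while two ``stacked'' quadrilaterals $\mathcal{F}_{\overline{pq}}$ and $\mathcal{F}_{\overline{p'q'}}$ in neighbouring rows share a cross-edge that is a genuine edge of $C_f$. Because the two normal geodesics of a horizontal edge are disjoint (Lemma~\ref{single-caustic-lemma}), the chord $\overline{f_p^d f_q^d}$ meets the normal geodesic at $f_p$ only at $f_p^d$; hence every endpoint $X$ of a segment of $\sigma_{\overline{pq}}^d$ is either (a) a vertex $f_p^d$ or $f_q^d$ of $f^d$ lying on $e_p$ or $e_q$, or (b) a relative-interior point of a cross-edge of $\mathcal{F}_{\overline{pq}}$. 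A separate check — using the hypothesis that no two adjacent vertices of $f^d$ coincide, together with the equidistance statement in Lemma~\ref{single-caustic-lemma} — shows that $f_p^d$ is never a vertex of $C_f$, so in case (a) the point $f_p^d$ lies in the relative interior of $e_p$.

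In case (a) the two horizontal edges of $f^d$ at $p$ produce segments $\sigma_{\overline{p_-p}}^d$ and $\sigma_{\overline{pq}}^d$, each with $X=f_p^d$ as an endpoint, and both are non-degenerate because each chord emanates from $f_p^d$ toward the side of the normal geodesic at $f_p$ on which the relevant caustic quadrilateral lies (the side carrying the other normal geodesic); so valence $\ge 2$ holds. Case (b) is the heart of the matter. Here $X$ lies in the relative interior of a cross-edge $e$ shared by $\mathcal{F}_{\overline{pq}}$ and a stacked quadrilateral $\mathcal{F}_{\overline{p'q'}}$, and I would show $X$ is also an endpoint of a non-degenerate segment of $\sigma_{\overline{p'q'}}^d$. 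The key input is Theorem~\ref{thm:concirc-quads} applied to $f^d$ (itself a discrete flat surface, by Remark~\ref{rem-parallelsurfaces}): the four vertices $f_p^d,f_q^d,f_{q'}^d,f_{p'}^d$ are concircular, hence span a common geodesic plane $\Pi^d$. From $X\in\overline{f_p^d f_q^d}\subset\Pi^d$, $X\in e\subset\Pi_{\overline{pq}}\cap\Pi_{\overline{p'q'}}$, and $\overline{f_{p'}^d f_{q'}^d}\subset\Pi^d\cap\Pi_{\overline{p'q'}}$, one deduces that $X$ lies on the geodesic through $f_{p'}^d$ and $f_{q'}^d$; a short ``which side of the strip between two disjoint geodesics'' argument (the normal geodesics of $\overline{p'q'}$ being disjoint, and $\mathcal{F}_{\overline{p'q'}}$ lying in that strip) then forces $X$ onto the segment $\overline{f_{p'}^d f_{q'}^d}$ itself, and transversally to $e$ (transversality because $f_p^d$ is not a caustic vertex). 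Hence $X$ is an endpoint of a non-degenerate segment of $\sigma_{\overline{p'q'}}^d$, giving valence $\ge 2$; in both cases the two segments meeting at $X$ lie in distinct geodesic planes and so are genuinely distinct edges of $S_d$.

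The main obstacle will be case (b): matching the segment leaving $\mathcal{F}_{\overline{pq}}$ through a cross-edge with the segment entering the stacked caustic quadrilateral, which I expect to hinge essentially on the concircularity of the quadrilaterals of the parallel surface $f^d$ (Theorem~\ref{thm:concirc-quads}). The secondary difficulty will be ruling out the degenerate configurations of the geodesic planes $\Pi_{\overline{pq}}$, $\Pi_{\overline{p'q'}}$ and $\Pi^d$ (e.g.\ two of them coinciding, or a chord running along a cross-edge); these exceptional positions are controlled by the two standing hypotheses — no coincident adjacent vertices of $f^d$, and embedded faces of $C_f$ — together with the genericity of the relative positions of these planes.
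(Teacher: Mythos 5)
Your proposal is correct and follows essentially the same route as the paper: the heart of both arguments is that an endpoint $X$ of a partial segment lies on a cross-edge of the caustic, hence in both the geodesic plane of the concircular quadrilateral of $f^d$ (Theorem \ref{thm:concirc-quads}) and the plane of the stacked caustic face, so $X$ lies on the geodesic through $f_{p'}^d,f_{q'}^d$, and the strip between the two disjoint normal geodesics together with embeddedness of the caustic face forces a nondegenerate continuation of $S_d$ through $X$. The one step you leave to ``genericity of the relative positions of these planes'' --- that the plane of the $f^d$-quadrilateral differs from the plane of the caustic face --- is not controlled by the two standing hypotheses but is exactly Proposition \ref{finalprop3}, which the paper derives from the properly-embedded-quadrilaterals assumption on $g$; with that substitution your argument matches the paper's.
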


\begin{remark}
The snowman shown on the right-hand side of Figure \ref{uglyfigure8} 
(see Example \ref{example4pt12}) provides an example to which 
Theorem \ref{finalprop2} applies.  The Airy example in Section 
\ref{section4} also satisfies the conclusion of this theorem (see 
Figure \ref{uglyfigure6}), although it does not actually satisfy the 
condition in the theorem that the faces of the caustic be embedded.
\end{remark}

\begin{remark}
At least one of the assumptions in Theorem \ref{finalprop2} that the 
quadrilaterals of $C_{f}$ are embedded and that no two adjacent vertices 
of $f^d$ coincide is necessary.  Without them, the 
discrete hourglass, as seen in Figure \ref{uglyfigure8} 
(see Example \ref{example4pt12}), 
would provide a counterexample to the result.  
However, it is still an open question whether both of those conditions 
are really needed.  There are reasons 
why it is not obvious that we can remove one of those two conditions.  
We explore those reasons in Appendix \ref{appendix9}.  
\end{remark}

\begin{figure}[phbt]
  \centering
  \includegraphics[scale=0.75]{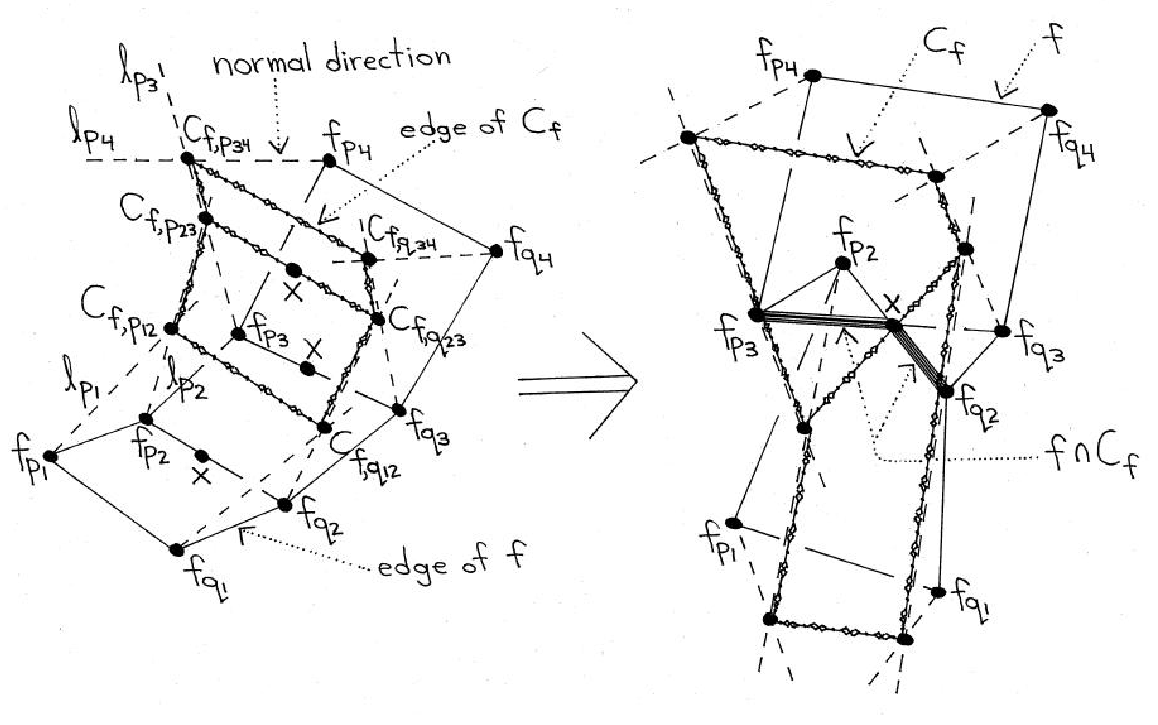}
  \vspace{-0.2in}
  \caption{A graphical representation of the argument in the proof 
            of Theorem \ref{finalprop2}.}
  \label{uglyfigure1}
\end{figure}

\begin{proof}
We must show that all vertices of $S_d$ have valence at least two.  
There are essentially only two situations for which we need to 
show this, one obvious and one not obvious.  The obvious case is 
when an entire edge $\overline{f_{p}^df_{q}^d}$ lies in $S_d$, 
and then the result is clear.  The non-obvious case that we now 
describe, where only a part of $\overline{f_{p}^df_{q}^d}$ 
lies in $S_d$, is essentially only one situtation, since 
any other non-obvious situation can be reformulated in terms 
of the notation given below.  Without loss of generality, replacing 
$f^d$ by $f$ if necessary, we may assume that $d=1$.  

Let $D$ be a domain in $\mathbb{Z}^2$ containing $p_1=(0,1)$, 
$p_2=(0,2)$, $p_3=(0,3)$, $p_4=(0,4)$, $q_1=(1,1)$, 
$q_2=(1,2)$, $q_3=(1,3)$, 
$q_4=(1,4)$, and let $g$ be a discrete holomorphic function 
defined on $D$.  Let $f=f^1$ be the resulting discrete flat surface.  
We have a normal direction defined at each vertex of $f$, which 
determines normal geodesics $\ell_{p_i}$, $\ell_{q_i}$ at the 
vertices $f_{p_i}$, $f_{q_i}$, respectively.  Note that 
$\ell_{p_i}$ and $\ell_{q_i}$ never intersect (and thus $f_{p_i}$ and 
$f_{q_i}$ are never equal), although they do lie in 
the same geodesic plane, and that 
$\ell_{p_i}$ and $\ell_{p_{i+1}}$ (resp. 
$\ell_{q_i}$ and $\ell_{q_{i+1}}$) intersect at a single point that we call 
$C_{f,p_{i,i+1}}$ (resp. $C_{f,q_{i,i+1}}$), by 
Lemma \ref{single-caustic-lemma}.  The point $C_{f,p_{i,i+1}}$ 
(resp. $C_{f,q_{i,i+1}}$) is equidistant from the two vertices 
$f_{p_i}$ and $f_{p_{i+1}}$ (resp. 
$f_{q_i}$ and $f_{q_{i+1}}$), as in Lemma \ref{single-caustic-lemma}. 

Now the caustic $C_f$ has two quadrilaterals, described here by 
listing their vertices in order about each quadrilateral: 
\[ \mathcal{F}_1=(C_{f,p_{12}},C_{f,q_{12}},C_{f,q_{23}},C_{f,p_{23}}) 
\;\;\; \text{and} \;\;\; 
\mathcal{F}_2 = (C_{f,p_{23}},C_{f,q_{23}},C_{f,q_{34}},C_{f,p_{34}}) \; . 
\]  
Let $x$ be a point in the geodesic edge $\overline{f_{p_2}f_{q_2}}$ 
so that $x$ also lies in the edge $\overline{C_{f,p_{23}}C_{f,q_{23}}}$.  
Since no two adjacent vertices of $f$ are ever equal, it follows that $x$ 
lies strictly in the interior of $\overline{f_{p_2}f_{q_2}}$.  
(See the left-hand side of Figure \ref{uglyfigure1}.)  Thus, since the 
face $\mathcal{F}_1$ of the caustic is embedded, there exists a 
half-open interval $\mathcal{I} = (y,x]$ or $\mathcal{I} = [x,y)$ 
contained entirely in the interior of 
$\overline{f_{p_2} f_{q_2}}$ so that $\mathcal{I}$ lies in 
$\mathcal{F}_1$.  

Thus $x$ can become a vertex of the graph $S_{d=1}$.  
We wish to show that the valence at $x$ is at least two.  This would 
mean that the visual representation is more like in the 
right-hand side of Figure \ref{uglyfigure1}, where the quadrilateral 
of $f$ with vertices $f_{p_2},f_{q_2},f_{q_3},f_{p_3}$ is nonembedded.  
It suffices to show that there exists a half-open interval 
$\tilde{\mathcal{I}} = (\tilde y,\tilde x]$ or $\tilde{\mathcal{I}} = 
[\tilde x,\tilde y)$ contained entirely in the interior of the geodesic edge 
$\overline{f_{p_3} f_{q_3}}$ so that: 
\begin{enumerate}
\item $\tilde{\mathcal{I}}$ lies in the face $\mathcal{F}_2$ 
      of the caustic, and 
\item $\tilde x = x$.
\end{enumerate}
Because $x$ lies in both the geodesic plane determined by 
$f_{p_2},f_{q_2},f_{q_3},f_{p_3}$ and the geodesic plane determined by 
$f_{p_3},f_{q_3},C_{f,{q_{23}}},C_{f,{p_{23}}}$, and because 
Proposition \ref{finalprop3} implies these two geodesic planes are not 
equal, $x$ must also 
lie in the line determined by $\overline{f_{p_3},f_{q_3}}$.  
Then, because $x$ lies in the embedded face $\mathcal{F}_1$, 
and because both $\overline{C_{f,p_{23}}C_{f,q_{23}}}$ and 
$\overline{f_{p_3}f_{q_3}}$ lie in the geodesic planar 
region between $\ell_{p_3}$ and $\ell_{q_3}$, $x$ must 
lie in the edge $\overline{f_{p_3},f_{q_3}}$ itself.  
(Note that we have now proven that
the quadrilateral with vertices $f_{p_2}$, $f_{q_2}$, $f_{q_3}$ 
and $f_{p_3}$ is not embedded, so in fact the visual representation 
must be more like in the right-hand side of Figure \ref{uglyfigure1}.)  

Keeping in mind that $x$ also 
lies in the edge $\overline{C_{f,p_{23}}, C_{f,q_{23}}}$, then 
since both edges $\overline{f_{p_3},f_{q_3}}$ and 
$\overline{C_{f,p_{23}}, C_{f,q_{23}}}$ lie in the plane determined 
by $\mathcal{F}_2$, and since $\mathcal{F}_2$ is embedded, we conclude 
existence of such an interval $\tilde{\mathcal{I}}$ with the required 
properties.  
\end{proof}

The above Lemma \ref{finalprop1} and Theorem \ref{finalprop2} 
suggest that $S_d$ has many of the right properties to make it a 
natural candidate for the singular set of 
any discrete surface $f^d$ in the parallel family of $f$.  

\begin{remark}
We have chosen to consider the set $S_d$ in the image $f(D)$ of 
a discrete flat surface, rather than in the domain $D$ itself (as is 
usually done for the singular set in the smooth case), 
because $S_d$ becomes a collection of connected curves in the image, 
while in the domain $D$ it would jump discontinuously between points 
in the lower and upper horizontal edges of quadrilaterals of $D$ (as 
we have seen in the above proof).  However, one could remedy this by 
inserting vertical lines between those lower and upper edge points in 
quadrilaterals of $D$, and then consider the set in the domain.  
\end{remark}

\section{Appendix: the discrete power function}
\label{appendix}

In Figure \ref{discairycmc}, we have drawn some of the discrete surfaces 
in the linear Weingarten family associated 
with the discretization of the Airy equation.  
For constructing these graphics, we used the discrete holomorphic 
power function.  We explain here how to solve 
the difference equation for determining the discrete power 
function, as follows:
\begin{eqnarray*}
& \text{cr}_{m,n} = 
\frac{(g_{m,n}-g_{m+1,n})(g_{m+1,n+1}-g_{m,n+1})}
{(g_{m+1,n}-g_{m+1,n+1})(g_{m,n+1}-g_{m,n})}
=-1 \; , & \\
&
\gamma g_{m,n} = 2m\frac{(g_{m+1,n}-g_{m,n})(g_{m,n}-g_{m-1,n})}
{g_{m+1,n}-g_{m-1,n}} 
+ 2n\frac{(g_{m,n+1}-g_{m,n})(g_{m,n}-g_{m,n-1})}
{g_{m,n+1}-g_{m,n-1}} \; , & \\
\end{eqnarray*}
with the initial conditions
\[g_{0,0}=0, \quad g_{1,0}=1, \quad g_{0,1}=i^\gamma \; . \]
Once we know $g_{m,0}$ and $g_{0,n}$, the full solution is
given by solving the first equation for the cross ratio.

We fix $n$ and set
\[
g_m=g_{m,n}\quad \hbox{\rm and}\quad G_m=g_{m,n+1} \; .
\]
Then, it is seen from the cross ratio condition that
\[
G_{m+1}-g_{m+1} =
{(g_{m+1}-g_m)(G_m-g_m) - (g_{m+1}-g_m)^2
\over (g_{m+1}-g_m) + (G_m-g_m)} \; .
\]
If we set
\[a_m=G_m-g_m \quad \hbox{\rm and}\quad p_m=g_{m+1}-g_m \; , \]
then $\{a_m\}$ satisfies
\[
a_{m+1}={p_ma_m-p_m^2\over a_m+p_m} \; .
\]
We define the recurrence relations:
\begin{eqnarray*}
b_{m+1} &=& p_mb_m-p_m^2c_m, \\
c_{m+1} &=& b_m + p_mc_m 
\end{eqnarray*}
so that 
\[
a_{m}={b_m\over c_m} \; .
\]
The initial conditions are
\[c_0 =1 \quad \hbox{\rm and} \quad b_0=g_{0,n+1}-g_{0,n} \; .
\]
The relation is written in the form
\[
\left(\begin{array}{c} b_{m+1}\\c_{m+1}\end{array}\right)
=
\left(\begin{array}{cc} p_m & -p_m^2 \\ 1 & p_m \end{array}\right)
\left(\begin{array}{c} b_{m}\\c_{m}\end{array}\right)
\]
and the eigenvalues of the $2 \times 2$ matrix just above 
are $(1\pm i)p_m$.

The difference equation satisfied by $c_m$ is
\[
c_{m+2}-(p_m+p_{m+1})c_{m+1}+2p_m^2 c_m =0 \qquad (c\ge 0)
\]
where $c_0=1$ and $c_1 = g_{0,n+1}+g_{1,n}-2g_{0,n}$.  
Once we have determined $\{c_m\}$, then
\[ b_m=c_{m+1}-p_mc_m \;\;\; \text{and} \;\;\; a_m={b_m\over c_m} \]
determine
\[
G_m = g_{m,n}+a_m \; .
\]
Then, using Equation \eqref{eqn-agafonovaxes}, 
we can determine $g_{m,n}$ for all 
nonegative $m$ and $n$.  

We have the following fact, which was also stated in \cite{Bob-new}: 

\begin{lemma}\label{lem-app8}
Suppose that $g_{m,n}$ is the discrete holomorphic function solving 
\eqref{eqn:alphagnm} and 
\eqref{eqn-agafonovaxes} for one choice of $\gamma$, and suppose 
$\hat g_{m,n}$ is the same, but with $\gamma$ replaced by $\hat \gamma = 
2-\gamma$.  Then $g_{m,n}$ and $-\hat g_{m,n}$ satisfy Equation 
\eqref{eqn:geometryandsomething} 
with $\alpha_{pq}=1$ (resp. $\alpha_{pq}=-1$) on horizontal (resp. 
vertical) edges.  
\end{lemma}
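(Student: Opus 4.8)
The plan is to read the asserted identity as the discrete counterpart of the smooth fact that, for $g=z^\gamma$, the primitive $\int(g')^{-1}\,dz$ is a constant multiple of $z^{2-\gamma}$; so I want to show that the function produced from $g$ by the Christoffel-type difference equation \eqref{eqn:geometryandsomething} is, up to a sign, exactly the discrete power function $\hat g$ with exponent $\hat\gamma=2-\gamma$. Since $g$ has cross ratio identically $-1$, the assignment $\alpha_{pq}=1$ on horizontal edges and $\alpha_{pq}=-1$ on vertical edges is a cross ratio factorizing function for $g$, and by the fact recorded in Section \ref{section:dlwsobt} (that the function defined by \eqref{eqn:geometryandsomething} is single-valued, discrete holomorphic, and has the same cross ratios as $g$), the function $h$ on $D$ determined by
\[ h_{m+1,n}-h_{m,n}=\frac{-1}{g_{m+1,n}-g_{m,n}},\qquad h_{m,n+1}-h_{m,n}=\frac{1}{g_{m,n+1}-g_{m,n}},\qquad h_{0,0}=0, \]
is well defined and has cross ratio $\equiv-1$. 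The lemma is equivalent to $h=-\hat g$, i.e.\ to $-h=\hat g$. As a first check, $(-h)_{0,0}=0$, $(-h)_{1,0}=1/(g_{1,0}-g_{0,0})=1$, and $(-h)_{0,1}=-1/(g_{0,1}-g_{0,0})=-i^{-\gamma}=i^{2-\gamma}=i^{\hat\gamma}$, which are precisely the initial conditions for $\hat g$.

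Using only the two difference relations above together with the elementary collapse $\bigl(\tfrac{1}{a}\cdot\tfrac{1}{b}\bigr)/\bigl(\tfrac{1}{a}+\tfrac{1}{b}\bigr)=\tfrac{1}{a+b}$ (applied with signed arguments in the vertical direction), a short computation shows that the right-hand side of the discrete power recursion \eqref{eqn:alphagnm} evaluated on $-h$ equals $\tfrac{2m}{g_{m+1,n}-g_{m-1,n}}-\tfrac{2n}{g_{m,n+1}-g_{m,n-1}}$. Along the axis $n=0$ the second term drops, so the content that remains is the one-dimensional identity
\[ (2-\gamma)\sum_{k=1}^{m}\frac{1}{g_{k,0}-g_{k-1,0}}=\frac{2m}{g_{m+1,0}-g_{m-1,0}}, \]
together with its analogue on $\{m=0\}$, which reduces to this one via $g_{0,n}=i^\gamma g_{n,0}$ from \eqref{eqn-agafonovaxes}. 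This identity is the heart of the proof and the only step that uses that $g$ actually solves \eqref{eqn:alphagnm}, not merely the cross ratio equation.

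I would prove the identity by induction on $m$. Write $p_k=g_{k+1,0}-g_{k,0}$, so $p_0=1$ and, from $g_{2,0}=2/(2-\gamma)$ in \eqref{eqn-agafonovaxes}, $p_1=\gamma/(2-\gamma)$; the base case $m=1$ is then immediate. The axis case of \eqref{eqn:alphagnm} reads $\gamma g_{m,0}(p_m+p_{m-1})=2m\,p_m p_{m-1}$. Subtracting this relation at $m$ from the one at $m+1$ and using the telescoping $g_{m+1,0}-g_{m,0}=p_m$ gives
\[ \frac{(m+1)p_{m+1}}{p_{m+1}+p_m}-\frac{m\,p_{m-1}}{p_m+p_{m-1}}=\frac{\gamma}{2}, \]
and then the inductive step for the displayed identity reduces, after clearing denominators, to $\tfrac{(m+1)p_m}{p_{m+1}+p_m}-\tfrac{m\,p_m}{p_m+p_{m-1}}=\tfrac{2-\gamma}{2}$, which follows from that relation by writing $\tfrac{(m+1)p_m}{p_{m+1}+p_m}=(m+1)-\tfrac{(m+1)p_{m+1}}{p_{m+1}+p_m}$ and simplifying.

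Finally I would appeal to uniqueness. Both $-h$ and $\hat g$ are discrete holomorphic with cross ratio $\equiv-1$ on $D$, have the same initial values $\hat g_{0,0},\hat g_{1,0},\hat g_{0,1}$, and satisfy \eqref{eqn:alphagnm} with exponent $2-\gamma$ along both coordinate axes. As explained in Section \ref{sect:2pt2}, these data determine the discrete power function uniquely — one propagates along the axes by \eqref{eqn:alphagnm} and then fills in the interior from the cross ratio being $-1$ — so $-h=\hat g$, i.e.\ $h=-\hat g$, which is the asserted instance of \eqref{eqn:geometryandsomething}. The main obstacle is the one-dimensional axis identity in the third paragraph; everything else is bookkeeping with the difference relations and the uniqueness of the discrete power function.
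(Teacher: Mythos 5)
Your proof is correct, but it takes a genuinely different route from the paper's, whose entire argument is two sentences: verify the edge relation \eqref{eqn:geometryandsomething} on the two coordinate axes directly from the closed Pochhammer formulas \eqref{eqn-agafonovaxes}, then propagate into the interior ``by induction'' over quadrilaterals, using that $g$ and $\hat g$ both have cross ratio $-1$ so that three edges of a quadrilateral determine the fourth. Your interior step --- agreement on the axes plus cross ratio $\equiv -1$ forces $-h=\hat g$ everywhere --- is essentially that same induction packaged as a uniqueness statement, so the real divergence is on the axes: instead of computing with \eqref{eqn-agafonovaxes}, you derive the telescoping identity $(2-\gamma)\sum_{k=1}^{m}(g_{k,0}-g_{k-1,0})^{-1}=2m/(g_{m+1,0}-g_{m-1,0})$ directly from the recursion \eqref{eqn:alphagnm}, which avoids the Pochhammer symbols entirely and makes transparent the analogy with the smooth fact that $\int (g')^{-1}\,dz$ is proportional to $z^{2-\gamma}$. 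I checked the details and they hold: the initial values match since $-i^{-\gamma}=i^{2-\gamma}$; the parallel-resistor collapse gives the stated right-hand side of \eqref{eqn:alphagnm} on $-h$, with the minus sign on the vertical term; the $m=0$ axis reduces to the $n=0$ axis via $g_{0,n}=i^{\gamma}g_{n,0}$ because the recursion is homogeneous of degree one; and the inductive step does follow from the subtracted relation by the $(m+1)-\frac{(m+1)p_{m+1}}{p_{m+1}+p_m}$ rewriting. Two points are worth making explicit rather than implicit: you lean on the asserted (but unproved) bullet of Section \ref{section:dlwsobt} that $h$ is single-valued with the same cross ratios as $g$ --- this is exactly where the discrete holomorphicity of $g$ enters --- and the uniqueness of the axis values needs the remark that \eqref{eqn:alphagnm} is a nondegenerate Moebius relation in $g_{m+1,0}$ given $g_{m-1,0},g_{m,0}$, so the recursion plus two initial values really does pin down the axis. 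Neither is a gap; your version is longer than the paper's but buys independence from the explicit formulas \eqref{eqn-agafonovaxes}.
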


\begin{proof}
That \eqref{eqn:geometryandsomething} holds on the edges $(m,0)(m+1,0)$
and $(0,n)(0,n+1)$ can be easily confirmed from Equation 
\eqref{eqn-agafonovaxes}.  Then an induction argument proves the 
result on all other edges as well.  
\end{proof}

Recently, \cite{AHKM} solved this system explicitly in terms of
hypergeometric functions.  

\section{Appendix: On a maximum principle for 
discrete holomorphic functions}
\label{appendix9}

If $g(z)$ is a smooth nonconstant holomorphic 
function with respect to the usual 
complex coordinate $z$ for $\mathbb{C}$, then $\log |g|$ is a harmonic 
function, and the maximum principle for harmonic functions 
tells us that $\log |g|$ cannot have a local 
finite minimum at an interior 
point of the domain.  Thus, if $|g|$ has a local minimum at 
an interior point $z_0$, it must be that $g(z_0)=0$.  

There are various ways to discretize the notions of holomorphicity 
and harmonicity.  
See \cite{Bob-Hoff-Spring}, \cite{BMerS}, 
\cite{Bob-Spring}, \cite{BS}, \cite{steen}, \cite{Mer}, 
\cite{Pink-Polt}, \cite{Ura}, to name 
just a few of the possible references -- however, the history of 
this topic goes back much further than just the references 
mentioned here.  
These ways do provide for discrete versions of the maximum 
principle.  The definition we have chosen here for 
discrete holomorphic functions based on cross ratios, however, 
does not satisfy a particular simple-minded discrete version of the maximum 
principle, as we can see by the first explicit example below.  
A more sophisticated consideration is needed to produce a proper 
discrete version of the maximum principle, but we do not discuss that 
here, as the simplest questions are what are relevant to Theorem 
\ref{finalprop2}.  

\begin{example}\label{exa9pt1}
Set $D = \{ (m,n) \, | \, -1 \leq m \leq 1, -1 \leq n \leq 2 \}$.  
Then, with $i = \sqrt{-1}$, set 
\[ 
g_{0,-1} = \tfrac{1}{3}-6i \; , \;\;\; 
g_{0,0} = \tfrac{1}{3} \; , \;\;\; 
g_{0,1} = \tfrac{1}{3}+i \; , \;\;\; 
g_{0,2} = \tfrac{10}{3}+10i \; , \]\[ 
g_{-1,0} = \tfrac{1}{3}-\tfrac{1}{2}\sqrt{35}+\tfrac{1}{2}i 
+3 \cos (-\tfrac{2}{5} \pi)+3 i \sin (-\tfrac{2}{5} \pi) \; , \]\[ 
g_{1,0} = \tfrac{1}{3}+\tfrac{1}{2}\sqrt{15}+\tfrac{1}{2}i 
+2 \cos (-\tfrac{2}{5} \pi)+2 i \sin (-\tfrac{2}{5} \pi) \; , 
\]
and extend $g$ to all of $D$ so that all cross ratios of 
$g$ on $D$ are $-1$.  That is, take $g$ so that all 
$\alpha_{(m,n)(m+1,n)} = -1$ and all 
$\alpha_{(m,n)(m,n+1)} = 1$.  
Then we have the following two properties: 
\begin{enumerate}
\item Amongst all vertices of $D$, $|g|$ has a strict 
minimum of $1/3$ at the interior vertex $(0,0)$.  
\item Amongst all edges of $D$ between adjacent vertices 
$p$ and $q$ (both horizontal and vertical), $|g_q-g_p|$ 
has a strict minimum of $1$ at the interior edge from 
$(0,0)$ to $(0,1)$.  
\end{enumerate}
\end{example}

\begin{figure}[phbt]
  \centering
  \includegraphics[scale=0.32]{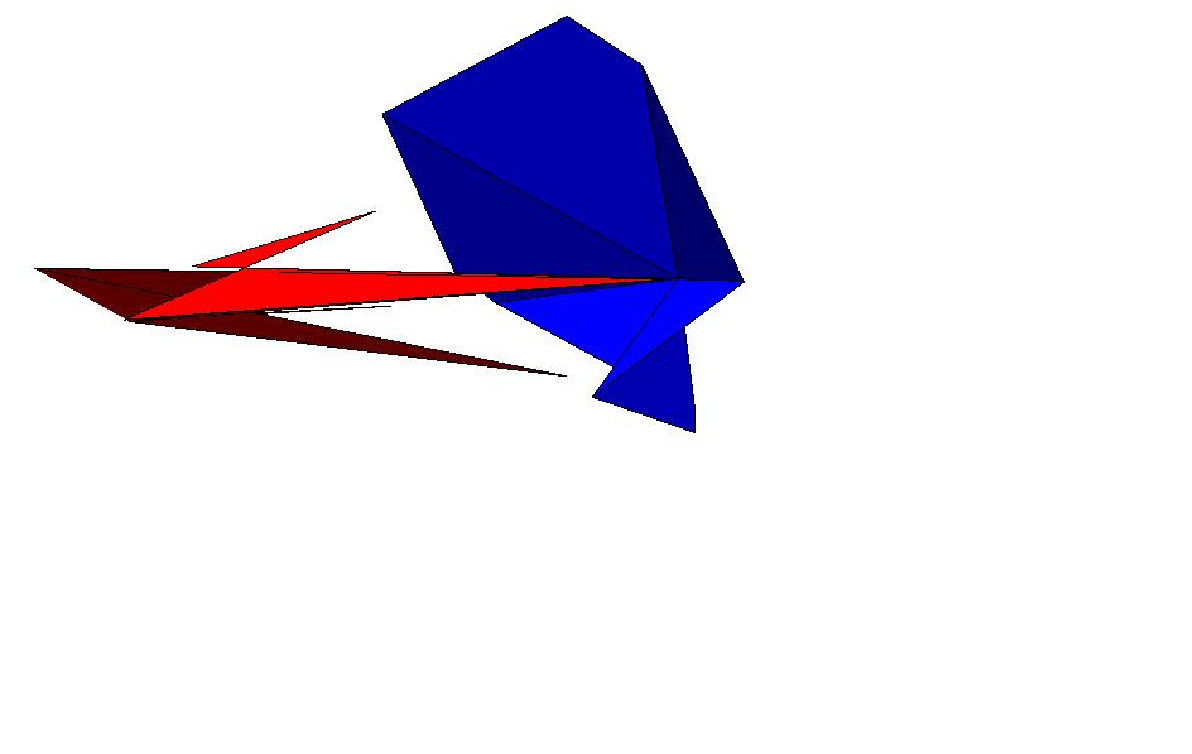}
  \includegraphics[scale=0.37]{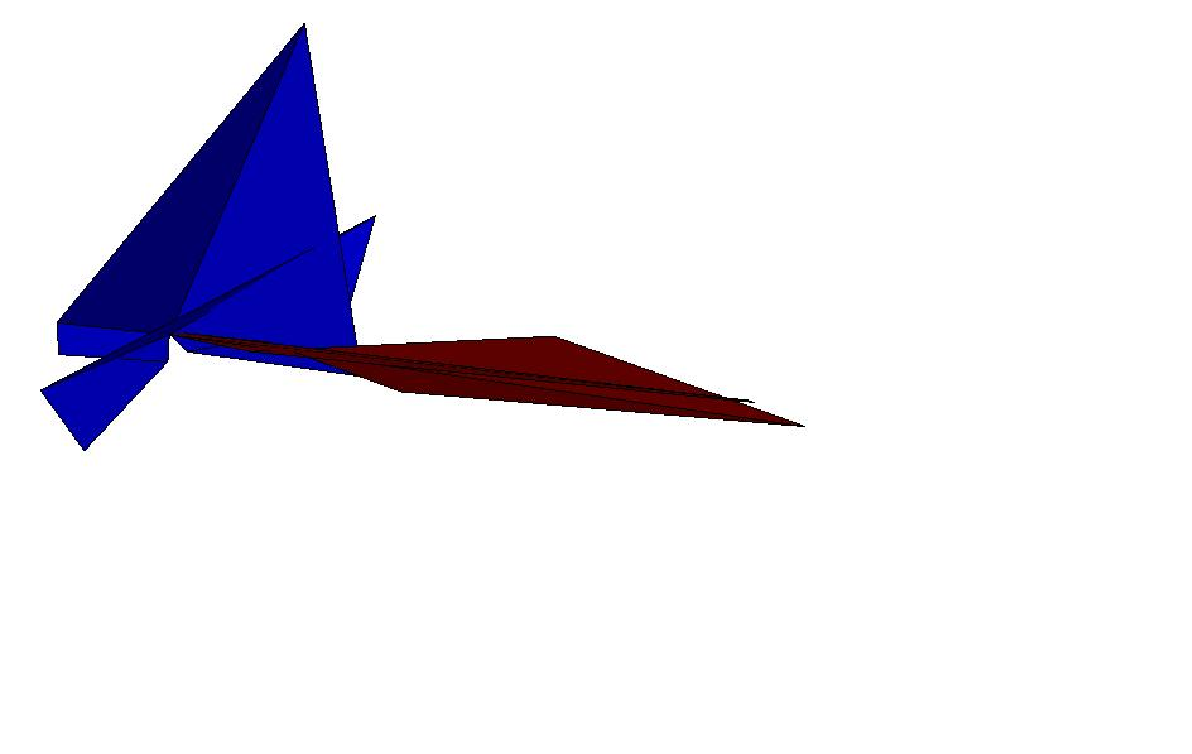}
  \vspace{-0.5in}
  \caption{Caustics $C_{f}$ which meet $f^d$ for some $d$ at 
   a single point, with $g$ taken as in Examples \ref{exa9pt1} and 
   \ref{exa9pt2}.  
   The caustics are shown in red, and the $f^d$ are shown in blue.}
  \label{fig-appendix9}
\end{figure}

Taking $g$ as in the previous example, and taking 
$\lambda =1/100$, we can produce the flat surfaces $f^d$ 
and the caustic $C_{f}$.  It turns out that the 
surface $f^d$ and the caustic $C_{f}$ intersect 
at just one point if $d \approx 1/10$ is chosen correctly.  
See the left-hand side of Figure \ref{fig-appendix9}.  
In this case, two adjacent vertices (coming from $(0,0)$ and 
$(0,1)$) of $f^d$ do coincide.  However, 
the quadrilaterals of the caustic $C_{f}$ are 
not embedded in this case, so this example does not suffice 
to show that both assumptions at the end of the first paragraph 
of Theorem \ref{finalprop2} are truly needed.  

In light of the equations in the proof of Lemma 
\ref{single-caustic-lemma}, 
a natural next step toward understanding the role of coincidence 
of vertices of $f^d$ in Theorem \ref{finalprop2} 
(and thus toward understanding 
which assumptions the theorem really needs) 
is to consider a discrete holomorphic function with the 
properties as in the next explicit example.  

\begin{example}\label{exa9pt2}
Taking the same domain $D$ as in the previous example, we now set 
\[ 
g_{0,-1} = 1-\tfrac{3}{5}i \; , \;\;\; 
g_{0,0} = 0 \; , \;\;\; 
g_{0,1} = i \; , \;\;\; 
g_{0,2} = 1+\tfrac{8}{5}i \; , \]\[ 
g_{-1,0} = -\sqrt{90}+\tfrac{1}{2}i 
+\tfrac{19}{2} \cos (-\tfrac{1}{10} \pi)+\tfrac{19}{2} i 
\sin (-\tfrac{1}{10} \pi) \; , \]\[ 
g_{1,0} = -\sqrt{6}+\tfrac{1}{2}i 
+\tfrac{5}{2} \cos (-\tfrac{3}{5} \pi)+\tfrac{5}{2} i 
\sin (-\tfrac{3}{5} \pi) \; , 
\]
and, like in the previous example, we extend $g$ to all of $D$ so 
that all $\alpha_{(m,n)(m+1,n)} = -1$ and all 
$\alpha_{(m,n)(m,n+1)} = 1$.  
Then we have the following two properties: 
\begin{enumerate}
\item Amongst all vertical edges of $D$ between adjacent vertices 
$p$ and $q$, $|g_q-g_p|$ 
has a strict minimum of $1$ at the interior edge from 
$(0,0)$ to $(0,1)$.  
\item Amongst any three vertical edges 
$pq=(-1,n)(-1,n+1)$ and $pq=(0,n)(0,n+1)$ and $pq=(1,n)(1,n+1)$ 
at the same height in $D$, $|g_q-g_p|$ 
is (strictly) minimized at the central edge $(0,n)(0,n+1)$.  
\end{enumerate}
\end{example}

Using the function $g$ in Example \ref{exa9pt2}, one might hope that 
the resulting surface $f^d$ would show the necessity of the 
assumption in Theorem \ref{finalprop2} that the adjacent vertices 
of $f^d$ do not coincide.  However, it turns out that the 
quadrilaterals of $C_{f}$ are not embedded in this case as well.  
See the right-hand side of Figure \ref{fig-appendix9}, where 
again $\lambda=1/100$ and $d$ ($\approx 1/10$) is taken so that 
the two vertices of $f^d$ coming from $(0,0)$ and $(0,1)$ 
coincide.  

Because of these subtleties, we leave open the question of whether just 
one of the two conditions in Theorem \ref{finalprop2} that 
\begin{enumerate}
\item the adjacent vertices of $f^d$ never coincide, and 
\item the caustic has embedded faces 
\end{enumerate} 
would suffice.


\begin{thebibliography}{10}

\bibitem{Ag} S. I. Agafonov, {\itshape Discrete Riccati equation, 
hypergeometric functions and circle patterns of Schramm type}, 
Glasgow Math. J. 47A (2005), 1-16.

\bibitem{AHKM} H. Ando, M. Hay, K. Kajiwara and T. Masuda, 
{\itshape An explicit formula for the discrete power 
function associated with circle patterns of Schramm type}, 
preprint 2010.  

\bibitem{Bob-new} A. I. Bobenko, {\itshape Discrete conformal 
maps and surfaces}, Symmetry and integrability of difference 
equations, Cambridge Univ. Press, London Math. Soc. Lect. 
Note Series 255 (1999), 97-108.  

\bibitem{Bob-Hoff-Spring} A. I. Bobenko, T. Hoffmann and 
B. A. Springborn, {\itshape Minimal surfaces from circle 
patterns: geometry from combinatorics}, Ann. of Math. 164 (2006), 
231-264.  

\bibitem{BMS} A. I. Bobenko, D. Matthes and Y. B.  
Suris, {\itshape Discrete and smooth orthogonal systems: 
$C^\infty$-approximation}, Internat. Math. Research Notices 
45 (2003), 2415-2459.  

\bibitem{BMerS} A. I. Bobenko, C. Mercat and Y. B. Suris, 
{\itshape Linear and nonlinear theories of discrete analytic functions.  
Integrable structure and isomonodromic Green's function}, J. Reine 
und Angew. Math. 583 (2005), 117-161.  

\bibitem{BP} A. I. Bobenko and U. Pinkall, {\itshape Discrete isothermic 
surfaces}, J. reine angew Math. 475 (1996), 187-208.  

\bibitem{bp2} A. I. Bobenko and U. Pinkall, {\itshape Discretization of 
surfaces and integrable systems}, Oxford Lecture Ser. Math. Appl. 16, 
Oxford Univ. Press (1999), 3-58.  

\bibitem{Bob-Spring} A. I. Bobenko and B. A. Springborn, {\itshape 
A discrete Laplace-Beltrami operator for simplicial surfaces}, 
Discrete and Computational Geometry 38 (2007), 740-756.  

\bibitem{BS} A. I. Bobenko and Y. B. Suris, {\itshape Discrete 
differential geometry -- integrable structure}, Grad. Studies in Math. 
98, Amer. Math. Society (2008).  

\bibitem{Br} R. Bryant, {\itshape Surfaces of mean curvature one in 
hyperbolic space}, Asterisque 154-155 (1987), 321-347.  

\bibitem{GMM-first} J. A. Galvez, A. Martinez and F. Milan, 
{\itshape Flat surfaces in hyperbolic $3$-space}, 
Math. Ann. 316 (2000), 419-435.

\bibitem{GMM} J. A. Galvez, A. Martinez and F. Milan, 
{\itshape Complete linear Weingarten surfaces of Bryant type, a 
Plateau problem at infinity}, 
Trans. A.M.S. 356 (2004), 3405-3428.

\bibitem{Udo1} U. Hertrich-Jeromin, {\itshape Transformations of discrete 
isothermic nets and discrete cmc-$1$ surfaces in hyperbolic space}, 
Manusc. Math. 102 (2000), 465-486.  

\bibitem{Udo2} U. Hertrich-Jeromin, {\itshape Introduction to Moebius 
differential geometry}, Cambridge Univ. Press, London Math. Soc. Lect. 
Note Series 300, 2003.  

\bibitem{H} T. Hoffmann, Software for drawing discrete flat surfaces, 
www.math.tu-berlin.de/$^\sim$hoffmann/interactive/flatFronts/FlatFront.jnlp .

\bibitem{KSY} T. Koike, T. Sasaki and M. Yoshida, {\itshape Asymptotic 
behavior of the hyperbolic Schwarz map at irregular singular points}, 
Funkcial. Ekvak. 53 (2010), 99-132.

\bibitem{KRSUY} M. Kokubu, W. Rossman, K. Saji, M. Umehara 
and K. Yamada, {\itshape Singularities of flat fronts in hyperbolic 
$3$-space}, Pacific J. Math. 221 (2005), 303-351.

\bibitem{KRUY1} M. Kokubu, W. Rossman, M. Umehara and 
K. Yamada, {\itshape Flat fronts in hyperbolic 
$3$-space and their caustics}, J. Math. Soc. Japan 59 (2007), 265-299.

\bibitem{KRUY2} M. Kokubu, W. Rossman, M. Umehara and 
K. Yamada, {\itshape Asymptotic behavior of flat surfaces in hyperbolic 
$3$-space}, J. Math Soc. Japan 61 (2009), 799-852.  

\bibitem{KU} M. Kokubu and M. Umehara, {\itshape Global properties of 
linear Weingarten surfaces of Bryant type in hyperbolic $3$-space}, 
preprint. 

\bibitem{KUY} M. Kokubu, M. Umehara and 
K. Yamada, {\itshape Flat fronts in hyperbolic 
$3$-space}, Pacific J. Math. 216 (2004), 149-175.

\bibitem{steen} S. Markvorsen, {\itshape Minimal webs in 
Riemannian manifolds}, Geom Dedicata 133 (2008), 7-34.  

\bibitem{Mer} C. Mercat, {\itshape Discrete Riemann surfaces}, 
Handbook of Teichmuller Theory, Vol 1, IRMA Lectures in Mathematics 
and Theoretical Physics 11, Europian Mathematical Society (2007), 
541-575.

\bibitem{Pink-Polt} U. Pinkall and K. Polthier, {\itshape 
Computing discrete minimal surfaces and their conjugates}, 
Exp. Math. 2 (1993), 15-36.

\bibitem{Roit} P. Roitman, {\itshape Flat surfaces in hyperbolic 
$3$-space as normal surfaces to a congruence of geodesics}, 
Tohoku Math. J. 59 (2007), 21-37.

\bibitem{RUY1} W. Rossman, M. Umehara and K. Yamada, 
{\itshape Irreducible constant mean curvature $1$ surfaces in 
hyperbolic space with positive genus}, Tohoku J. Math. 49 (1997), 
449-484.  

\bibitem{SY} T. Sasaki and M. Yoshida, {\itshape Hyperbolic Schwarz 
maps of the Airy and the confluent hypergeometric differential 
equations and their asymptotic behaviors}, J. Math. Sci. Univ. 
Tokyo 15 (2008), 195-218.  

\bibitem{SY2} T. Sasaki and M. Yoshida, {\itshape Singularities of flat 
fronts and their caustics, and an example arising from the hyperbolic 
Schwarz map of a hypergeometric equation}, Results in 
Math. 56 (2009), 369-385.

\bibitem{SYY} T. Sasaki, K. Yamada and M. Yoshida, 
{\itshape Derived Schwarz maps of the hypergeometric differential 
equation and a parallel family of flat fronts}, Int. J. Math. 19 
(2008), 847-863.  

\bibitem{Sch} O. Schramm, {\itshape Circle packings with the 
combinatorics of the square grid}, Duke Math. J. 86(2) (1997), 347-389.  

\bibitem{UY1} M. Umehara and K. Yamada, {\itshape Complete surfaces 
of constant mean curvature -1 in the hyperbolic 3-space}, 
Ann. of Math. 137 (1993), 611-638.  

\bibitem{Ura} H. Urakawa, {\itshape A discrete analogue of 
the harmonic morphisn and Green kernel comparison theorems}, 
Glasgow Math. J. 42 (2000), 319-334.  

\end{thebibliography}
\end{document}